\newcommand{\cal}{\mathcal}
\newcommand{\chop}{\dagger}
\def\epsilon{\varepsilon}
\def\phi{\varphi}
\def\Pr{\mathbb P}
\def\hat{\widehat}
\newcommand{\supp}{\mbox{supp}}
\newcommand{\Curr}{\mbox{Curr}}
\newcommand{\Out}{\mbox{Out}}
\newcommand{\Aut}{\mbox{Aut}}
\newcommand{\BBT}{\mbox{BBT}}
\newcommand{\Ax}{\mbox{Ax}}
\newcommand{\vol}{\mbox{vol}}
\newcommand{\diag}{\mbox{diag}}
\newcommand{\diagclos}{\overline{\mbox{diag}}}
\newcommand{\Mod}{\mbox{Mod}}
\newcommand{\ILT}{\mbox{ILT}}
\newcommand{\FN}{F_N}   
\newcommand{\cvn}{\mbox{cv}_N}
\newcommand{\cvnbar}{\overline{\mbox{cv}}_N}
\newcommand{\CVN}{\mbox{CV}_N}
\newcommand{\CVNbar}{\overline{\mbox{CV}}_N}
\newcommand{\PCurr}{\Pr\Curr(\FN)}
\newcommand{\curr}{\Curr(\FN)}
\newcommand{\R}{\mathbb R}
\newcommand{\Z}{\mathbb Z}
\newcommand{\N}{\mathbb N}
\def\strutdepth{\dp\strutbox}
\def \ss{\strut\vadjust{\kern-\strutdepth \sss}}
\def \sss{\vtop to \strutdepth{
\baselineskip\strutdepth\vss\llap{$\diamondsuit\;\;$}\null}}
\def\strutdepth{\dp\strutbox}
\def \sst{\strut\vadjust{\kern-\strutdepth \ssss}}
\def \ssss{\vtop to \strutdepth{
\baselineskip\strutdepth\vss\llap{$\spadesuit\;\;$}\null}}
\def\strutdepth{\dp\strutbox}
\def \ssh{\strut\vadjust{\kern-\strutdepth \sssh}}
\def \sssh{\vtop to \strutdepth{
\baselineskip\strutdepth\vss\llap{$\heartsuit\;\;$}\null}}
\def\qed{\hfill\rlap{$\sqcup$}$\sqcap$\par}
\def\bar{\overline}
\def\tilde{\widetilde}
\def\strutdepth{\dp\strutbox}
\def \ss{\strut\vadjust{\kern-\strutdepth \sss}}
\def \sss{\vtop to \strutdepth{
\baselineskip\strutdepth\vss\llap{$\diamondsuit\;\;$}\null}}
\def\strutdepth{\dp\strutbox}
\def \sst{\strut\vadjust{\kern-\strutdepth \ssss}}
\def \ssss{\vtop to \strutdepth{
\baselineskip\strutdepth\vss\llap{$\spadesuit\;\;$}\null}}
\def\qed{\hfill\rlap{$\sqcup$}$\sqcap$\par}
\newtheorem{thm}{Theorem}[section]
\newtheorem{cor}[thm]{Corollary}
\newtheorem{lem}[thm]{Lemma}
\newtheorem{prop}[thm]{Proposition}
\newtheorem{hyp}[thm]{Hypothesis}
\theoremstyle{definition}
\newtheorem{defn}[thm]{Definition}
\newtheorem{example}[thm]{Example}
\newtheorem{facts}[thm]{Facts}
\newtheorem{convention}[thm]{Convention}
\newtheorem{convention-defn}[thm]{Convention-Definition}
\newtheorem{defn-rem}[thm]{Definition-Remark}
\newtheorem{rem}[thm]{Remark}
\newtheorem{notation}[thm]{Notation}
\numberwithin{equation}{section}
\begin{document} 
 
\author[Ilya Kapovich]{Ilya Kapovich} 
 
\address{\tt Department of Mathematics, University of Illinois at 
 Urbana-Champaign, 1409 West Green Street, Urbana, IL 61801, USA 
 \newline http://www.math.uiuc.edu/\~{}kapovich/} \email{\tt 
 kapovich@math.uiuc.edu} 
 
 \author[Martin Lustig]{Martin Lustig} 
 \address{\tt 
LATP,
Centre de Math\'ematiques et Informatique, 
Aix-Marseille Universit\'e, 
39, rue F.~Joliot Curie, 
13453 Marseille 13, 
France}
 \email{\tt Martin.Lustig@univ-amu.fr}

\thanks{The first author was supported by the NSF grant DMS-0904200}
 
\title[Invariant laminations for iwips]{Invariant laminations for irreducible automorphisms of free groups}

\begin{abstract} 
For every 
irreducible
hyperbolic 
automorphism $\phi$ of $\FN$ (i.e. the analogue of a pseudo-Anosov mapping class) it is shown that the algebraic lamination dual to the forward limit tree $T_+(\phi)$ is obtained as ``diagonal closure'' of the support of the backward limit current $\mu_-(\phi)$. This diagonal closure is obtained through a finite procedure in analogy to adding diagonal leaves from the complementary components to the stable lamination of a pseudo-Anosov homeomorphism. We also give several new characterizations as well as a structure theorem for the dual lamination of $T_+(\phi)$, in terms of Bestvina-Feighn-Handel's ``stable lamination'' associated to $\phi$.
\end{abstract}

\subjclass[2000]{Primary 20F36, Secondary 20E36, 57M05} 
 
\keywords{laminations, free groups, $\R$-trees, iwip automorphisms} 

 
\maketitle

\section{Introduction}\label{intro} 

For a closed surface $\Sigma$ with Euler characteristic $\chi(\Sigma) < 0$ geodesic laminations 
play an important theoretical role, for many purposes. In particular, if such a lamination $\frak L$ is equipped with a transverse measure $\mu$, it becomes a powerful tool, for example in the analysis of the mapping class $[h]$ of a homeomorphism $h: \Sigma \to \Sigma$. The set of projective classes $[\frak L, \mu]$ of such {\em measured laminations} carries a natural topology, and the issuing space $\cal{PML}(\Sigma)$ of projective measured laminations is homeomorphic to a high dimensional sphere: One of Thurston's fundamental results shows that $\cal{PML}(\Sigma)$ serves naturally as boundary to the Teich\-m\"uller space $\cal T(\Sigma)$ of $\Sigma$, and the action of the mapping class group $\Mod(\Sigma)$ extends canonically to the compact union $\cal T(\Sigma) \cup \cal{PML}(\Sigma)$.

\smallskip

In this paper we are interested in the ``cousin world'', where $\Sigma$ (or rather $\pi_1 \Sigma$) is replaced by a free group $\FN$ of finite rank $N \geq 2$, the mapping class group $\Mod(\Sigma)$ is replaced by the outer automorphism group $\Out(\FN)$, and the role of Teichm\"uller space $\cal T(\Sigma)$ is (usually) taken on by Outer space $\CVN$. These and the other terms used in this introduction will be explained in section \ref{new-background}.

\smallskip

In the $\Out(\FN)$-setting, both, topological laminations $\frak L$ and measured laminations $(\frak L, \mu)$ have natural analogues, but the situation is quite a bit more intricate:  There are two competing analogues of measured laminations, namely $\R$-trees $T$ with isometric $\FN$-action, and currents $\mu$ over $\FN$. Both, the space of such $\R$-trees as well as the space of such currents have projectivizations which can be used to compactify 
$\CVN$ (in two essentially different ways, see \cite{KL1}), and both are used as important tools to analyze single automorphisms of $\FN$.  Also, both, an $\R$-tree $T$ and a current $\mu$, determine an {\em algebraic lamination}, denoted $L^2(T)$ (the {\em dual} 
or {\em zero} lamination of $T$) and $L^2(\mu) = \supp(\mu)$ (the {\em support} of $\mu$) respectively. Algebraic laminations $L^2$
for a free group $\FN$ are the natural analogues of (non-measured) geodesic laminations $\frak L$ on a surface $\Sigma$ 
(or rather, of their lift $\tilde{\frak{L}} \subset \tilde \Sigma$ to the universal covering $\tilde \Sigma$ of $\Sigma$: the set $L^2$ consists of $\FN$-orbits of pairs $(X, Y) \in \partial^2\FN := \partial\FN \times \partial\FN \smallsetminus \{(Z, Z) \mid Z \in \partial\FN \}$).

\smallskip

The two spaces which serve (after projectivization) as analogues of $\cal T(\Sigma) \cup \cal{PML}(\Sigma)$, namely the space $\cvnbar$ of very small $\R$-tree actions of $\FN$, and the space $\curr$ of currents over $\FN$, are related naturally by an $\Out(\FN)$-equivariant continuous {\em intersection form} $\langle \cdot, \cdot \rangle: \cvnbar \times\curr \to \R_{\geq 0}$ (see \cite{KL2}). In \cite{KL3} it has been shown that $\langle T, \mu\rangle = 0$ if and only if 
$\supp(\mu) \subset L^2(T)$. Sometimes this inclusion is actually an equality; 
in particular, this can happen if the $\FN$-action on $T$ is free, but even for free actions it isn't automatic: A more subtle condition is needed, which, in the setting of a closed surface $\Sigma$, with $T = T_\mu$ dual to 
a measured lamination $(\frak L, \mu)$ on $\Sigma$, would correspond to
the condition that
$\frak L$ is {\em maximal} (i.e. every complementary component has 
3 cusps, and at the same time 
{\em minimal} (i.e. no proper sublamination): 
then the current $\hat \mu$ defined by $(\frak L, \mu)$ satisfies $\supp(\hat\mu) = L^2(T_\mu)$.

\smallskip

However, if some complementary component of $\frak L$ has more than 3 cusps, then $L^2(T_\mu)$ will always contain all diagonal leaves of that component, while those are missing in $\supp(\hat\mu)$.
This is the basic phenomenon why
in general one can not deduce from $\langle T, \mu\rangle = 0$ the equality $\supp(\mu) = L^2(T)$: The dual laminations of $\R$-trees are by nature always {\em diagonally closed}, i.e. the {\em diagonal closure} $\diagclos(L^2(T))$ 
(see subsection \ref{laminations}) 
is equal to $L^2(T)$ for all $T \in \cvnbar$. On the other hand, the support of a current may well be minimal and thus typically a proper subset of its diagonal closure. 

\smallskip

Moreover, it is easy to find {\em perpendicular} pairs $(T, \mu)$ (i.e. $T$ and $\mu$ satisfy $\langle T, \mu\rangle = 0$) where even the diagonal closure $\diagclos(\supp(\mu))$ is only a proper subset of $L^2(T)$. 
A class of examples with even stronger properties is described in section \ref{discussion}; alternatively, any measured lamination $(\frak L, \mu)$ on a surface which has a complementary component with non-abelian $\pi_1$  defines a perpendicular pair $(T_\mu, \hat \mu)$ as above, with $\diagclos(\supp(\hat\mu)) \neq L^2(T_\mu)$.

\smallskip

For pseudo-Anosov mapping classes $[h]$ of $\Sigma$ it is well known that the induced action on $\cal T(\Sigma) \cup \cal{PLM}(\Sigma)$ has uniform North-South dynamics.  In particular, $[h]$ has precisely one attracting and one repelling fixed point on $\cal{PLM}(\Sigma)$.  These very statements are also true for the induced actions of {\em atoroidal iwip automorphisms} of $\FN$ 
(as defined in subsection \ref{iwips})
on both ``cousin spaces'', $\CVNbar$ and $\PCurr$, so that the (much studied) class of atoroidal iwip automorphisms should be considered as strict analogue of the class of pseudo-Anosov mapping classes. 
In subsection \ref{iwips} we explain that this class also coincides with the class of {\em irreducible hyperbolic automorphisms} of $\FN$.
In \cite{KL3} it has been shown that for any atoroidal iwip $\phi \in \Out(\FN)$ the {\em forward limit tree} $T_+ = T_+(\phi) \in \cvnbar$, 
which defines the attracting fixed point $[T_+] \in \CVNbar$, 
is perpendicular to the {\em backward limit current} $\mu_- = \mu_-(\phi) \in \curr$,  
which gives the repelling fixed point $[\mu_-] \in \PCurr$:
$$\langle T_+(\phi), \mu_-(\phi)\rangle = 0 \quad {\rm for \,\, any\,\, atoroidal\,\, iwip} \quad \phi \in \Out(\FN)$$

The main result of this paper can now be stated as follows:

\begin{thm}
\label{main-thm}
Let $\phi$ be an atoroidal iwip automorphism of $\FN$. Let $T_+$ be its forward limit tree, and let $\mu_-$ be its backward limit current.
Then the dual lamination $L^2(T_+)$ and the diagonal closure of the support $\supp (\mu_-)$ satisfy:
$$L^2(T_+) = \diagclos(\supp (\mu_-)) = \diag(\supp (\mu_-))$$
\end{thm}

(Note here that the {\em diagonal extension} $\diag(L^2))$ of an algebraic lamination $L^2$, defined in subsection \ref{laminations}, is always contained in the diagonal closure $\diagclos(L^2)$, but in general it is more practical to handle than the latter.)

\smallskip

This result raises the question, what the precise conditions are, which allow one to deduce for perpendicular pairs $(T, \mu) \in \cvnbar \times \curr$ the analogous conclusion, i.e. that $L^2(T) = \diagclos(\supp (\mu))$. A necessary condition for such a conclusion is that $\mu$ ``fills out'' enough of the available room in $\FN$ which is ``potentially dual'' to $T$.  Making such an indication precise takes more room than available here in the introduction.  We will discuss those matters below in section \ref{discussion}.

\smallskip

A second result of this paper, which is stronger than Theorem \ref{main-thm}, concerns the structure of the lamination $L^2(T_+)$. The proof of this Theorem \ref{main-result2} is really the main purpose of this paper; we show below how to derive Theorem \ref{main-thm} from 
it.  Theorem \ref{main-result2} is also the crucial ingredient in our follow-up paper 
\cite{KL-CT}
on the fibers of the Cannon-Thurston map for iwip automorphisms of $\FN$.

\begin{thm}
\label{main-result2}
Let $\phi$ be an atoroidal iwip automorphism of $\FN$, and let $T_+$ be its forward limit tree. Then there exists a sublamination $L^2_{BFH} \subset L^2(T_+)$ which satisfies:
\begin{enumerate}
\item
$L^2_{BFH}$ is the ``stable lamination'' for $\phi^{-1}$, as exhibited by Bestvina-Feighn-Handel \cite{BFH}. In particular, $L^2_{BFH}$ is minimal and non-empty.
\item
$L^2_{BFH}$ is the only minimal and non-empty sublamination of $L^2(T_+)$.
\item
$L^2(T_+) = \diagclos(L^2_{BFH}) = \diag(L^2_{BFH})$
\item
$L^2(T_+) \smallsetminus L^2_{BFH}$ is a finite union of $\FN$-orbits of pairs $(X, Y) \in \partial^2 \FN$.
\end{enumerate}
\end{thm}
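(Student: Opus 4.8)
The plan is to prove Theorem~\ref{main-result2} directly and then read off Theorem~\ref{main-thm}. First I would normalize: replacing $\phi$ by a suitable positive power changes neither $T_+$, nor $L^2(T_+)$, nor (up to the same passage to a power) Bestvina--Feighn--Handel's stable laminations, nor the truth of assertions (1)--(4); so I may assume $\phi$ is represented by an affine train-track map $f\colon\Gamma\to\Gamma$ with Perron--Frobenius eigenvalue $\lambda>1$, with $\Gamma$ metrized by the left PF eigenvector so that $f$ multiplies the length of every legal path by $\lambda$, and I also fix a train-track representative of $\phi^{-1}$. Define $L^2_{BFH}:=\supp(\mu_-)$. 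For an iwip the support of the backward limit current is exactly the attracting algebraic lamination of $\phi^{-1}$, i.e.\ the ``stable lamination'' of \cite{BFH} for $\phi^{-1}$; in particular $L^2_{BFH}$ is minimal and non-empty, which is (1). Since $\langle T_+,\mu_-\rangle=0$ (recalled above), the criterion of \cite{KL3} that $\langle T,\mu\rangle=0$ iff $\supp(\mu)\subseteq L^2(T)$ gives $L^2_{BFH}\subseteq L^2(T_+)$; and since every dual lamination is diagonally closed, $\diag(L^2_{BFH})\subseteq\diagclos(L^2_{BFH})\subseteq L^2(T_+)$. So for (3) only the reverse inclusion $L^2(T_+)\subseteq\diag(L^2_{BFH})$ remains, and this, together with (4), is the core of the proof.

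For that core I would work with the canonical $\FN$-equivariant morphism $i\colon\tilde\Gamma\to T_+$ that semiconjugates $\tilde f$ to the canonical $\lambda$-homothety of $T_+$, and with the associated Levitt--Lustig map $\mathcal Q\colon\partial\FN\to\widehat T_+\obs$, so that by definition $L^2(T_+)=\{(X,Y)\in\partial^2\FN:\mathcal Q(X)=\mathcal Q(Y)\}$. The length formula $d_{T_+}(i(x),i(y))=\lim_{n}\lambda^{-n}\bigl|[\tilde f^{\,n}(\sigma_{xy})]\bigr|$ (tightened $\Gamma$-length of the image of the geodesic $\sigma_{xy}$) yields a purely combinatorial description of the ``$\mathcal Q$-collapsed'' bi-infinite lines of $\Gamma$, namely those which, under iteration of $f$ followed by tightening, lose all of their ``legal content''. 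The two things I would extract from this description are: (i) the minimal $\mathcal Q$-collapsed sublamination is precisely $L^2_{BFH}$ (matching this intrinsic picture with BFH's construction of $\Lambda^+(\phi^{-1})$); and (ii) a general $\mathcal Q$-collapsed line is a bi-infinite concatenation of half-leaves of $L^2_{BFH}$ joined along finitely many ``singular turns'', each such turn located at a branch point of $T_+$ and governed by the --- at most one, by \cite{BFH} --- indivisible periodic Nielsen path of $f$; here atoroidality of $\phi$ is used to rule out Nielsen loops. Statement (ii) says exactly that every leaf of $L^2(T_+)$ is a leaf of $\diag(L^2_{BFH})$, completing (3); and since $T_+$ has only finitely many $\FN$-orbits of branch points and $f$ only finitely many orbits of singular turns, $L^2(T_+)\smallsetminus L^2_{BFH}$ is a finite union of $\FN$-orbits of pairs in $\partial^2\FN$, which is (4).

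Part (2) then follows formally. Let $L'\subseteq L^2(T_+)$ be a non-empty minimal sublamination and pick a leaf $\ell\in L'$, so $\overline{\FN\cdot\ell}=L'$. If $\ell\in L^2_{BFH}$, then minimality of $L^2_{BFH}$ forces $L'=\overline{\FN\cdot\ell}=L^2_{BFH}$. If instead $\ell\in L^2(T_+)\smallsetminus L^2_{BFH}$, then by (3) $\ell$ is a leaf of $\diag(L^2_{BFH})\smallsetminus L^2_{BFH}$, hence has at least one end which is a half-leaf spiralling onto a leaf of $L^2_{BFH}$; translating the ``window'' of $\ell$ out along that end by suitable elements of $\FN$ forces $\overline{\FN\cdot\ell}$ to contain a leaf of $L^2_{BFH}$, hence (by minimality of $L^2_{BFH}$) all of $L^2_{BFH}$; thus $L'=\overline{\FN\cdot\ell}\supseteq L^2_{BFH}$, whence $L'=L^2_{BFH}$ by minimality of $L'$, contradicting $\ell\notin L^2_{BFH}$. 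So $L^2_{BFH}$ is the unique minimal non-empty sublamination of $L^2(T_+)$, which is (2). Finally Theorem~\ref{main-thm} is immediate: $\supp(\mu_-)=L^2_{BFH}$ by definition, so Theorem~\ref{main-result2}(3) reads $L^2(T_+)=\diagclos(\supp(\mu_-))=\diag(\supp(\mu_-))$.

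The hard part is the core step of the second paragraph: establishing the exact train-track description of the fibres of $\mathcal Q$ for the (non-geometric, arational) tree $T_+$, pinning down that the only fully collapsed lines are those built recursively from $L^2_{BFH}$-half-leaves and the finitely many singular turns coming from the indivisible periodic Nielsen path of $f$, and carrying out the resulting $\FN$-orbit count. Everything surrounding it --- the reduction to a train-track map, the identification $L^2_{BFH}=\supp(\mu_-)=\Lambda^+(\phi^{-1})$, the inclusion $L^2_{BFH}\subseteq L^2(T_+)$ via the intersection form, and the deductions of (2) and of Theorem~\ref{main-thm} --- is either formal or a quotation of \cite{BFH} and \cite{KL3}.
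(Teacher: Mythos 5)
Your architecture matches the paper's in outline (identify $L^2_{BFH}$ with the attracting lamination of $\phi^{-1}$, get $L^2_{BFH}\subseteq L^2(T_+)$, prove $L^2(T_+)\subseteq\diag(L^2_{BFH})$ plus finiteness by a train-track analysis, then deduce (2)), but the step you yourself label ``the hard part'' is left entirely as a wish-list, and that step is where essentially all of the content of the theorem --- and three sections of the paper --- lives. Concretely, your claims (i) and (ii) are not established, and (ii) as stated is not quite what is true. The paper's route is: first, the bounded-backtracking property of $i:\tilde\tau_+\to T_+$ shows that every leaf of $L^2(T_+)$ is \emph{totally $C$-illegal} in the forward train track $\tau_+$, whence uniformly contracted by $\phi$ (Proposition \ref{length-contraction}); this is then converted into uniform \emph{expansion} under a train track representative $f_-$ of $\phi^{-1}$, and pulling finite windows backwards under $f_-$ forces every leaf, realized in $\tau_-$, to be used-legal apart from \emph{at most one} singularity (an unused legal turn or an INP of $f_-$, Proposition \ref{L-is-almost-legal}); a singular leaf is then shown to be a concatenation of exactly two eigenrays of $f_-$ (possibly separated by the INP, Proposition \ref{eigenrays-1}); and finally a blow-up/invariant-free-factor argument using the iwip hypothesis (Lemma \ref{only-one-equi-class}) shows that any two eigenrays at a periodic vertex are connected by a chain of used turns --- this last point is what actually places the singular leaves inside $\diag(L^2_{BFH})$ rather than merely exhibiting them as concatenations of half-leaves. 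Your sketch instead posits ``finitely many'' singular turns per leaf, governed by the INP of the \emph{forward} map $f$ and located at branch points of $T_+$; the correct picture is at most one singularity per leaf, described in $\tau_-$ via eigenrays and the INP of $f_-$ (the paper relates the singularities to branch points of $T_-$, not $T_+$). With only ``finitely many singularities per leaf and finitely many orbits of singular turns'' the orbit count in (4) does not follow: you would still need a uniform bound on the number of singularities and the eigenray rigidity that pins down the legal pieces between them.

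Two smaller points. First, you define $L^2_{BFH}:=\supp(\mu_-)$ and assert that this equals Bestvina--Feighn--Handel's stable lamination of $\phi^{-1}$; the paper deliberately avoids that identification (it defines $L^2_{BFH}$ combinatorially from $f_-$, derives only the inclusion $L^2_{BFH}\subseteq\supp(\mu_-)$ as a corollary of (2), and remarks that the equality ``can be shown directly'' without proving it), so if you use the equality as an input you owe a proof or a precise citation. Second, your derivation of (2) from (3)--(4) --- pushing the window of a diagonal leaf out along an end it shares with a leaf of $L^2_{BFH}$ --- is sound and is a legitimate alternative to the paper's argument, which instead observes that the finitely many extra $\FN$-orbits of leaves would have to be rational whereas eigenrays are never eventually periodic (Lemma \ref{non-periodic}); but this is a deduction from the core structural result, not a substitute for it.
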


The arguments given below 
prove also that $L^2_{BFH}$ is contained in $\supp(\mu_-)$. Indeed, 
a direct argument shows that the two laminations are equal, see \cite{CKL} or \cite{CP}.

\smallskip

In the course of our proof we also give several alternative characterizations of the dual lamination $L^2(T_+)$, based on (absolute) train track representatives of $\phi$ or of $\phi^{-1}$. The precise terminology of the terms used 
in the following statement 
is given in sections \ref{steps-one-and-two} and \ref{steps-three-and-four} below.

\begin{prop}
\label{alternative-characterizations}
Let $\phi$ be an atoroidal iwip automorphism of $\FN$, and let $T_+$ be its forward limit tree. Let $f_+: \tau_+ \to \tau_+$ and $f_-: \tau_- \to \tau_-$ be 
stable train track maps that represent 
suitable powers
$\phi^k$ and $\phi^{-k}$ (with $k \geq 1$) respectively.

\smallskip
\noindent
(1)
The dual lamination $L^2(T_+)$ consists precisely of all pairs $(X, Y) \in \partial^2\FN$ such that for some $C > 0$ the whole $\phi$-orbit $\{\phi^t(X,Y) \mid t \in \Z\}$ is totally $C$-illegal with respect to $f_+$.

\smallskip
\noindent
(2)
The dual lamination $L^2(T_+)$ consists precisely of all pairs $(X, Y) \in \partial^2\FN$ such that the whole $\phi$-orbit is uniformly $\phi$-contracting (or, equivalently, uniformy $\phi^{-1}$-expanding).

\smallskip
\noindent
(3)
The dual lamination $L^2(T_+)$ consists precisely of all pairs $(X, Y) \in \partial^2\FN$ such that the whole $\phi$-orbit is used legal with at most one singularity, with respect to 
$f_-$.
\end{prop}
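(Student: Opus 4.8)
The plan is to obtain the three descriptions as by‑products of the proof of Theorem~\ref{main-result2}, organising the work into an ``$f_+$‑half'' (conditions (1) and (2)) and an ``$f_-$‑half'' (condition (3)), the two halves being glued along the common object $L^2(T_+)$. Throughout I would use the concrete model of the forward limit tree recalled in Section~\ref{new-background}: $T_+$ is the metric direct limit of the universal cover $\tilde\tau_+$ under the lifted iterates $\tilde f_+^{\,n}$ with the $n$‑th copy rescaled by $\lambda_+^{-n}$, where $\lambda_+>1$ is the expansion factor of $f_+$. In this model the $T_+$‑pseudo‑length of a finite reduced path $\sigma$ in $\tilde\tau_+$ is $\lim_n \lambda_+^{-n}$ times the total length of the \emph{legal} sub‑pieces of the tightened path $\tilde f_+^{\,n}(\sigma)$, the error being controlled by the bounded cancellation constant of $f_+$.

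For the $f_+$‑half I would realize the leaf $(X,Y)$ as a bi‑infinite geodesic $\gamma$ of $\tilde\tau_+$. Since $\phi$ is an iwip, $T_+$ has dense orbits, so $(X,Y)\in L^2(T_+)$ means exactly that $\gamma$ maps to a single point of $T_+$, i.e.\ that every finite sub‑arc of $\gamma$ has $T_+$‑pseudo‑length $0$. By the displayed formula, and because under $f_+$ legal pieces are stretched by $\lambda_+$ while no new legal length beyond the cancellation bound is created, this holds iff the legal sub‑pieces of $\tilde f_+^{\,t}(\gamma)$ remain uniformly bounded for all $t$ — precisely the statement that the whole $\phi$‑orbit $\phi^t(X,Y)$ is totally $C$‑illegal with respect to $f_+$ for a suitable $C>0$. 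This yields $(X,Y)\in L^2(T_+)\Leftrightarrow(1)$, and $(1)\Leftrightarrow(2)$ falls out of the same estimate: a leaf is uniformly $\phi$‑contracting (equivalently uniformly $\phi^{-1}$‑expanding) exactly when the renormalized lengths $\lambda_+^{-t}|f_+^{\,t}(\sigma)|$ of its sub‑arcs $\sigma$ stay bounded, and by the estimate this is equivalent to bounded legal length, i.e.\ to (1).

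For the $f_-$‑half I would invoke Theorem~\ref{main-result2}, which identifies $L^2(T_+)$ with $\diagclos(L^2_{BFH})=\diag(L^2_{BFH})$, where $L^2_{BFH}$ is Bestvina--Feighn--Handel's stable lamination of $\phi^{-1}$, carried by the stable train track map $f_-$. By construction $L^2_{BFH}$ is the set of \emph{used legal} bi‑infinite lines for $f_-$, i.e.\ the used legal lines with no singularity. It then remains to check that the diagonal extension $\diag(L^2_{BFH})$ of Definition-Remark~\ref{diagonal-closure}(a) is exactly the set of used legal lines with \emph{at most one} singularity: such a one‑singularity line is obtained by grafting two used legal rays at a common vertex of $\tau_-$ along two distinct gates, which is precisely a diagonal‑leaf concatenation, and conversely adjoining a single diagonal leaf produces such a line. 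Because $f_-$ is stable there are only finitely many gates at its finitely many vertices, hence only finitely many $\FN$‑orbits of such graftings — which is also how one reads off parts (3) and (4) of Theorem~\ref{main-result2}. Combining, $(X,Y)\in L^2(T_+)\Leftrightarrow(3)$.

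The hard part, I expect, is the bridge between the two halves: making rigorous that ``collapsing/illegal under $f_+$'' (conditions (1),(2)) describes the same leaves as ``expanding/used legal under $f_-$'' (condition (3)). This is the duality between the attracting tree $T_+=T_+(\phi)$ and the attracting lamination of $\phi^{-1}$, and the argument must carry the bookkeeping of legal versus illegal pieces back and forth through the perpendicularity $\langle T_+(\phi),\mu_-(\phi)\rangle=0$ and through a careful comparison of the bounded cancellation constants of $f_+$ and of $f_-$. Verifying that the finitely many exceptional one‑singularity leaves are correctly accounted for on \emph{both} sides (totally $C$‑illegal for $f_+$ as well as used legal with one singularity for $f_-$) is the delicate point, and this is where the estimates of Sections~\ref{steps-one-and-two} and~\ref{steps-three-and-four} do the real work.
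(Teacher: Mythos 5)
Your proposal contains the right raw materials for the $f_+$--half, but it misses the logical device that makes the paper's proof work, and the $f_-$--half has genuine gaps. The paper never proves any of the three characterizations as a two--way equivalence. It establishes a single cycle of one--directional inclusions, $L^2(T_+)\subseteq S_1\subseteq S_2\subseteq S_3\subseteq \diagclos(L^2_{BFH}(f_-))$, where $S_i$ is the set defined in part $(i)$, and then closes the loop with Theorem~\ref{main-result2}~(3) and Proposition~\ref{dual-diag-closed}, which give $\diagclos(L^2_{BFH}(f_-))=L^2(T_+)$; all inclusions are then forced to be equalities. The first inclusion is Lemma~\ref{legal-bounded} (via Lemma~\ref{long-legal-in-short} and BBT), the second is Proposition~\ref{length-contraction} (via the ILT--decrease arguments), the third is Section~\ref{steps-three-and-four}, and the fourth is Corollary~\ref{no-singularity} together with Proposition~\ref{every-singular-leaf-is-diagonal}. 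Your plan instead proves $(1)\Leftrightarrow(2)\Leftrightarrow L^2(T_+)$ on the $f_+$ side and $(3)\Leftrightarrow L^2(T_+)$ on the $f_-$ side; if you could actually do both, no ``bridge'' between the halves would be needed at all, since both are anchored to $L^2(T_+)$ --- yet your last paragraph declares precisely that bridge to be the hard part and offers no mechanism for it. This signals that you have not settled on a coherent architecture. The inclusion $S_2\subseteq S_3$, which you defer to ``the estimates doing the real work,'' is the substantive step: one uses uniform $\phi^{-1}$--expansion to show (Lemma~\ref{finite-paths}) that every bounded subpath of a leaf realized in $\tau_-$ sits inside the chopped $f_-^t$--image of another such subpath, and then backward iteration together with monotonicity of $\ILT$ and the shrinking of maximal legal subpaths between singularities (Lemma~\ref{at-most-1-illegal}, Proposition~\ref{L-is-almost-legal}) yields ``used legal with at most one singularity.'' Nothing in your sketch produces this.

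Two further points in your $f_-$--half are wrong or incomplete as stated. First, $L^2_{BFH}(f_-)$ is \emph{not} ``by construction the set of used legal bi--infinite lines'': Definition~\ref{BFH-lamination} generates it from the paths $f_-^t(e)$, and only the inclusion of $L^2_{BFH}$ into the used legal lines is formal (Lemma~\ref{BFH-used}); the reverse inclusion for leaves of $L^2(T_+)$ requires the backward--iteration argument of Corollary~\ref{no-singularity}, which uses that the entire $\phi$--orbit of the leaf is controlled. Second, your claim that grafting two used legal rays at distinct gates ``is precisely a diagonal--leaf concatenation'' silently assumes that the two eigenrays involved are joined by a chain of $L^2_{BFH}$--leaves; this is exactly Lemma~\ref{only-one-equi-class}, proved by the blow--up construction at periodic vertices and the iwip hypothesis, and it cannot be omitted. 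By contrast, your $f_+$--half (the direct--limit model of $T_+$ and the pseudo--length formula) is a legitimate alternative route to $L^2(T_+)=S_1=S_2$, including the converse directions that the paper obtains for free from the cycle, though it requires machinery the paper does not set up.
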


This proposition is a direct consequence of the fact that in the course of our proof of Theorem \ref{main-result2} we show first that $L^2(T_+)$ is included in the set of pairs from $\partial^2\FN$ defined by the the property stated in part (1) of Theorem \ref{main-result2}.  One then shows that this set of pairs is contained in the set defined by part (2), and that the latter is contained in the set defined by part (3). In a final step one shows the set of $(X,Y)$ defined in part (3) is contained in the diagonal closure of $L^2_{BFH}$. 
But by part (3) of Theorem \ref{main-result2} the lamination $L^2(T_+)$ is equal to this diagonal closure, so that all of the previous inclusions must be in fact equalities.

\smallskip

Using the same type of arguments one deduces Theorem \ref{main-thm} from Theorem \ref{main-result2}:  From the perpendicularity of $T_+$ with $\mu_-$ we derived above that $\supp(\mu_-)$ is a subset of $L^2(T_+)$, which implies (compare Definition-Remark \ref{diagonal-closure}) $\diagclos(\supp(\mu_-)) \subset \diagclos(L^2(T_+))$.  Since $L^2(T_+)$ is diagonally closed (see Proposition \ref{dual-diag-closed}), we obtain:
$$\diagclos(\supp(\mu_-)) \subset L^2(T_+)$$
On the other hand, from the uniqueness property given by part (2) of Theorem \ref{main-result2} one deduces that $L^2_{FBH}$ is contained in $\supp(\mu_-)$. Hence (again by Definition-Remark \ref{diagonal-closure}) $\diag(L^2_{FBH})$ must be contained in $\diag(\supp(\mu_-)) \subset \diagclos(\supp(\mu_-))$, so that part (3) of Theorem \ref{main-result2} yields
$$L^2(T_+) \subset \diag(\supp(\mu_-)) \subset \diagclos(\supp(\mu_-)) \, .$$

\medskip
\noindent
{\em Acknowledgements:}
This paper has been influenced by many discussions with colleagues in our scientific neighborhood. In particular we would like to thank T. Coulbois and A. Hilion, and also point to their forthcoming joint work\footnote{\, In the mean time this has been written up, see \cite{CHR}}
with P. Reynolds, 
where much related (but harder) questions are treated with different (and distinctly deeper) methods.

\medskip
\noindent
{\em Historical comment:}
The main result of this paper was originally referred to under the name of ``Seven steps to happiness'' 
(compare \cite{KL-7steps} and \cite{Lu3}), due to the very origin of this paper, where the more optimistic of the two authors proposed a proof-scheme in seven steps, while the other, the more skeptical between us, decided that this reminded him of one of those advertised programs which guarantee personal fulfillment, wealth, or religious enlightenment. Since then, the result has been internally 
referred to 
by that title, and in particular the headers of the later chapters are still a reminiscence to the beginning of this particular collaboration.

\bigskip

\noindent
\underline{Organisation of the paper:}
\begin{itemize}
\item
In \S \ref{new-background} we recall the basic facts and definitions about $\R$-trees, Outer space and iwip automorphisms. We also review some of the basics of algebraic laminations, define the diagonal 
extension and 
closure of a lamination, and recall some of the known facts about the dual lamination of an $\R$-tree.
\item
In \S \ref{train-track-section} we recall the basic facts and definitions of train track maps. We purposefully employ space and care to make this section accessible for, say, a graduate student who is only partially an expert on the subject.
\item
In \S \ref{steps-one-and-two} we show that the dual lamination $L^2(T_+)$ of the forward limit tree $T_+$ of an atoroidal iwip automorphism $\phi$ has a strong contraction property with respect to the action of $\phi$. This implies that $L^2(T_+)$ has a strong expansion property with respect to the action of $\phi^{-1}$
\item
In \S \ref{steps-three-and-four} we will show that any lamination with this strong expansion property with respect to $\phi^{-1}$, when realized as reduced paths on a train track representative $f_-: \tau_- \to \tau_-$ of $\phi^{-1}$, consists only of paths which are entirely ``used legal'', or else have precisely one ``singularity'' (the terms will be defined there).
It will also be shown that the sublamination of $L^2(T_+)$ which is realized by used legal paths on $\tau_-$ is 
contained in Bestvina-Feighn-Handel's 
``stable'' 
lamination $L^2_{BFH}(f_-)$ associated to the train track map $f_-$.
\item
In \S \ref{steps-five-to-seven} we consider elements $(X, Y) \in L^2(T_+)$ which are realized on $\tau_-$ by used legal paths with precisely one singularity. We show that such paths have a rather special form: they are (essentially) the concatenation of two eigenrays of the map $f_-$. 
As a consequence, we show that such pairs $(X,Y)$ must lie in the diagonal extension of $L^2_{BFH}(f_-)$. 
Also, the finiteness of such pairs (up to the $\FN$-action) 
is a direct consequence of this characterization.
\item
In \S \ref{discussion} we discuss some question issuing from our main result and the surrounding facts.
\end{itemize}

\section{Definition of terms and background}
\label{new-background}

The purpose of this section is to properly define the terms used in the introduction, and to give some background with references about them.

Throughout the paper we fix a ``model'' free group $\FN$ of finite rank $N \geq 2$.  
A finite connected graph $\tau$ is called a {\em marked graph}, if it is equipped with a {\em marking isomorphism} $\theta: \FN \overset{\cong}{\longrightarrow} \pi_1(\tau)$.  Here we purposefully suppress the issue of choosing a basepoint of $\tau$, as in this paper we are interested in automorphims of $\FN$ only up to inner automorphisms.

\medskip

\subsection{Outer space} 
\label{Outer-space}

${}^{}$
\smallskip

We give here only a brief overview of basic facts related to Outer space. We refer the reader to~\cite{CV}
for more detailed background information.

The \emph{unprojectivized Outer space} $\cvn$ consists of all minimal free and discrete isometric actions on $F_N$ on $\mathbb R$-trees (where two such actions are considered equal if there exists an $F_N$-equivariant isometry between the corresponding trees). There are several different topologies on $\cvn$ that are known to coincide, in particular the equivariant Gromov-Hausdorff convergence topology and the so-called 
\emph{length function} topology. Every $T\in \cvn$ is uniquely determined by its \emph{translation length function} $||\cdot||_T:F_N\to\mathbb R$, where $||g||_T:= \min\{d(gx,x) \mid x \in T\}$ is the {\em translation length} of $g$ on $T$. 
Indeed, $||g||_T = 0$ implies that $g$ has a fixed point in $T$, while for $||g||_T > 0$ there is an {\em axis} $\Ax(g)$ in $T$ which is isometric to $\R$ and along which $g$ translates by the amount $||g||_T $. 

Two trees $T_1,T_2\in\cvn$ are close if the functions $||\cdot||_{T_1}$ and $||\cdot||_{T_1}$ are close pointwise on a large ball in $F_N$. The closure $\cvnbar$ of $\cvn$ in either of these two topologies is well-understood and known to consists precisely of all the so-called \emph{very small} minimal isometric actions of $F_N$ on $\mathbb R$-trees, see \cite{BF93} and \cite{CL}. 
The space $\partial \cvn := \cvnbar \smallsetminus \cvn$ is sometimes called the {\em Thurston boundary} of unprojectivized Outer space $\cvn$.

The outer automorphism group $\Out(F_N)$ has a natural continuous right action on $\cvnbar$ (that leaves $\cvn$ invariant) given at the level of length functions as follows: for $T\in \cvnbar$ and $\phi\in Out(F_N)$ we have $||g||_{T\phi}=||\Phi(g)||_T$, 
with
$g\in F_N$
and
$\Phi\in Aut(F_N)$ representing $\phi \in \Out(F_n)$.

The \emph{projectivized Outer space} $\CVN=\mathbb P\cvn$ is defined as the quotient $\cvn/\sim$ where for $T_1\sim T_2$ whenever $T_2=cT_1$ for some $c>0$. One similarly defines the projectivization $\CVNbar=\mathbb P\cvnbar$ of $\cvnbar$ as $\cvnbar/\sim$ where $\sim$ is the same as above. The space $\CVNbar$ is compact and contains $\CVN$ as a dense $\Out(F_N)$-invariant subset. The compactification $\CVNbar$ of $\CVN$ is a free group analogue of the Thurston compactification of the Teichm\"uller space. For $T\in \cvnbar$ its $\sim$-equivalence class is denoted by $[T]$, so that $[T]$ is the image of $T$ in $\CVNbar$.

\bigskip
\subsection{Laminations and the diagonal closure}
\label{laminations}

${}^{}$
\smallskip

For the free group $\FN$ we define the {\em double boundary}
\[
\partial^2 \FN:=
\{(X, Y)\in \partial F_N \times \partial F_N \mid X\ne Y\}. 
\]
The set $\partial^2 \FN$ comes equipped with a natural topology,
inherited from $\partial F_N \times \partial F_N$, and with a 
natural translation action of $F_N$ by homeomorphisms. There is also a
natural {\em flip} map $\partial^2 \FN \to \partial^2 \FN$,
$(X, Y)\mapsto (Y, X)$, interchanging the two coordinates on
$\partial^2 \FN$.

The following definition has been introduced and systematically studied in \cite{CHL1-I}, where also further background material concerning this subsection can be found.

\begin{defn}
\label{algebraic-lamination}
An \emph{algebraic lamination} on $F_N$ is a non-empty subset 
$L^2\subseteq
\partial^2 F_N$ which is (i) closed, (ii) $F_N$-invariant, and (iii) flip-invariant.

In analogy to laminations on surfaces, the elements $(X, Y) \in L^2$ are sometimes also referred to as the {\em leaves} of the algebraic lamination $L^2$.
\end{defn}

In some circumstances it is useful to admit the empty set as algebraic lamination. We will formally stick to the classical non-empty notion, but occasionally informally include the empty set in our discussions about algebraic lamination.

\begin{example}
\label{conj-classes}
Let $\Omega$ be a set of conjugacy classes $[w]$ with $w \in \FN$. Denote by $w^\infty$ and $w^{-\infty}$ the attracting and the repelling fixed points respectively of the action of $w$ on $\partial \FN$. Then
$$L^2(\tilde \Omega) := \overline{\{(v w^\infty, v w^{-\infty}) \mid v \in \FN, [w] \in \Omega \cup \Omega^{-1}\}}$$
is an algebraic lamination. 
\end{example}

\begin{rem}
\label{finite-laminations}
(a)
Note that, in the above Example \ref{conj-classes}, for finite $\Omega$ taking the closure on the right hand side of the displayed equality can be omitted without changing the set $L^2(\tilde\Omega)$.

\smallskip
\noindent
(b)
Thus for finite $\Omega$ one obtains a lamination $L^2(\tilde\Omega)$ which consists only of finitely many $\FN$-orbits of leaves. Indeed, it is an easy exercise to show that any such {\em finite} lamination occurs precisely in this way. (For the less experienced reader we recommend for this exercise the transition to 
{\em symbolic laminations}, as described in detail in \cite{CHL1-I},
see also the discussion below, after Remark \ref{non-closed}.)
\end{rem}

\begin{defn}
\label{minimal-laminations}
An algebraic lamination $L^2 \neq \emptyset$ is called {\em minimal} if it doesn't contain any proper sublamination (other than $\emptyset$). This is equivalent to requiring that for any $(X, Y) \in L^2$ the set $\FN \cdot (X, Y) \cup \FN \cdot (Y, X)$ is dense in $L^2$. 
\end{defn}

\smallskip

The following terminology is inspired by geodesic laminations on surfaces, where isolated leaves can cross diagonally through the complementary components of the lamination.

\begin{defn-rem}
\label{diagonal-closure}
Let $S$ be a subset of $\partial^2 F_N$.
\begin{enumerate}
\item[(a)]
The {\em diagonal extension} 
$\diag(S)$ of $S$ is the set of all $(X,Y)\in \partial^2 F_N$
such that for some integer $m\ge 1$ there exist elements $X=Z_0,
Z_1,\dots,$ $Z_m=Y$ in $\partial F_N$ such that
$(Z_{i-1},Z_{i})\in S$ for $i=1,\dots, m$.

\smallskip
\noindent
\item[(b)] It is easy to see that $S\subseteq \diag(S)$ and that $\diag(\diag(S))=\diag(S)$.

\smallskip
\noindent
\item[(c)]
A subset $S\subseteq \partial^2 F_N$ is said to be {\em diagonally 
extended} if $S = \diag(S)$.

\smallskip
\noindent
\item[(d)]
If $S'\subseteq S\subseteq \partial^2 F_N$ then  $\diag(S') \subseteq \diag(S)$.

\smallskip
\noindent
\item[(e)]
If $S$ is $F_N$-invariant, then so is $\diag(S)$.  Similarly, if $S$ is flip-invariant then so is $\diag(S)$. However, if $S$ is a closed subset of $\partial^2 S$, then we can deduce that $\diag(S)$ is closed only of $\diag(S) \smallsetminus S$ consists of finitely many $\FN$-orbits.

\smallskip
\noindent
\item[(f)]
We denote by $\diagclos(S)$ the closure of $\diag(S)$ in $\partial^2\FN$. For any algebraic lamination $L^2$ the {\em diagonal closure} $\diagclos(L^2)$ is again an algebraic lamination.  
We say that $L^2$ is {\em diagonally closed} if $L^2 = \diagclos(L^2)$.
If $\diag(L^2) \smallsetminus L^2$ consists of finitely many $\FN$-orbits, then $\diagclos(L^2) = \diag(L^2)$.

\end{enumerate}
\end{defn-rem}

\begin{rem}
\label{non-closed}
Examples of laminations where the diagonal extension is not closed are easy to come by. For example, 
if $L^2$ given by any minimal filling geodesic lamination 
$\frak L$ on a surface $\Sigma$ with boundary, then $\diag(L^2)$ 
contains every pair $(X, Y) \in \partial^2\FN$ given by a diagonal leaf in a complementary component
of $\frak L$ on $\Sigma$, 
but none of the rational laminations defined by the boundary curves of $\Sigma$, which however belong all to $\diagclos(L^2)$.

\end{rem}

If one picks a basis $\cal A$ for $\FN$, one can naturally associate to any pair $(X, Y) \in \FN$ a biinfinite reduced word $Z = X^{-1} \cdot Y$ in $\cal A\cup \cal A^{-1}$, and to $Z$ the set $\cal L$ of all finite subwords. In this way one can associate to every algebraic lamination $L^2$ a {\em symbolic lamination} $L_\cal A$ and a laminary language $\cal L_\cal A$, and both of these translations are invertible. For details see \cite{CHL1-I}.

Interpreting words in $\cal A^{\pm 1}$ as paths in the the associated rose $\rho_\cal A$, or in its universal covering
$\tilde \rho_\cal A$ (which is identical to the Cayley graph $\Gamma(\FN, \cal A)$), one can easily generalize the concepts of a ``symbolic lamination'' or a ``laminary language'' to more general graphs $\tau$ with an identification $\FN = \pi_1(\tau)$, or to their universal coverings. For the purposes of this paper, however, the following suffices:

\begin{defn}
\label{covering-versus-base}
Let $\tau$ be a graph with a marking $\theta: \FN \overset{\cong}{\longrightarrow} \pi_1 (\tau)$, and let $\partial \theta: \partial \FN \to \partial \tau$ be the induced homeomorphisms on the Gromov boundaries.

Let $(X, Y) \in \partial^2 \FN$, and let $\tilde \gamma$ be the biinfinite reduced path (the ``geodesic'') in the universal covering $\tilde \tau$ of $\tau$ which joins the boundary point $\partial \theta(X)$ to the boundary point $\partial \theta(Y)$. 

The reduced biinfinite path $\gamma$ in $\tau$ which is the image of $\tilde \gamma$, under the universal covering map $\tilde \tau \to \tau$, is called {\em the geodesic realization} (in $\tau$) of the pair $(X,Y)$, and is denoted by 
$\gamma_\tau(X, Y)$.
\end{defn}

Assume now that the marked graph $\tau$ comes with a homotopy equivalence $f: \tau \to \tau$ which {\em represents} 
$\phi \in \Out(\FN)$, 
i.e. 
some lift $\Phi \in \Aut(\FN)$ of $\phi$ satisfies
$\theta \circ \Phi  = f_* \circ \theta$.
In this case it follows directly from the above definition that,
for any integer $t \geq 0$ and any $(X, Y) \in \partial^2 \FN$, the geodesic realizations satisfy the equality
$$[f^t(\gamma_\tau(X, Y))] = \gamma_\tau(\partial\Phi^t(X), \partial\Phi^t(Y)) \, ,$$
where for any possibly non-reduced path $\eta$ we denote by $[\eta]$ the 
corresponding
reduced path 
(see Convention-Definition \ref{paths}).

\begin{rem}
\label{no-lift}
For distinct lifts $\Phi, \Phi' \in \Aut(\FN)$ of $\phi \in \Out(\FN)$ the lifts to $\tilde \tau$ of $\gamma_\tau(\partial\Phi(X), \partial\Phi(Y))$ and $\gamma_\tau(\partial\Phi'(X), \partial\Phi'(Y))$ differ only by the deck transformation induced by some $w \in \FN$ on $\tilde \tau$, so that one has:
$$\gamma_\tau(\partial\Phi(X), \partial\Phi(Y)) =\gamma_\tau(\partial\Phi'(X), \partial\Phi'(Y))$$
As a consequence, to smoothen the notation, we will in the sequel often simply denote this geodesic realization by
$\gamma_\tau(\phi(X),\phi(Y))$.
\end{rem}

We say that a lamination $L^2$ {\em is generated} by a set $\cal P$ of reduced edge paths $\gamma_i$ in a marked graph $\tau$, if a pair $(X, Y) \in \partial^2 \FN$ belongs to $L^2$ if and only if any finite subpath of the geometric realization $\gamma_\tau(X, Y)$  is also a subpath of some 
path $\gamma_i$ which satisfies 
$\gamma_i \in \cal P$ or $\bar \gamma_i \in \cal P$. In this case we write:
$$L^2 = L^2(\cal P)$$
Here typically the $\gamma_i$ are finite paths, and $\cal P$ is infinite, but we also include the case of closed, infinite or biinfinite $\gamma_i$.

\begin{rem}
\label{generated-contained}
It follows directly from this definition that for any algebraic lamination $L^2$ and any leaf $(X, Y) \in L^2$ with geodesic realization $\gamma := \gamma_\tau(X,Y)$ the lamination generated by $\gamma$ satisfies
$$L^2(\gamma) \subset L^2$$
(where we abbreviate $L^2(\{\gamma\})$ to $L^2(\gamma)$).
\end{rem}

It is easy to see (compare \cite{CHL1-I}) that any infinite set $\cal P$ of finite (possibly closed) reduced edge paths in $\tau$ generates a 
(non-empty) 
algebraic lamination $L^2(\cal P)$. If all $\gamma_i \in \cal P$ are closed paths, then $L^2(\cal P)$ only depends on the set $\Omega$ of conjugacy classes in $\FN$ defined by the $\gamma_i \in \cal P$ (and not on the particular choice of the marked graph $\tau$ and the geodesic realization of the conjugacy classes in $\tau$); in this case we write $L^2(\Omega) := L^2(\cal P)$.

\smallskip

We would like to point the reader's attention to the subtle difference between the laminations $L^2(\Omega)$ and the previously introduced lamination $L^2(\tilde\Omega)$ (see Example \ref{conj-classes}):  One always has $L^2(\Omega) \subset L^2(\tilde\Omega)$ and $ L^2(\tilde\Omega) = L^2(\tilde{\cal P})$, where for any infinite set $\cal P$ of closed loops in $\tau$ that determines $\Omega$ we mean by $\tilde{\cal P}$ the set of precisely those biinfinite reduced paths in $\tau$ that wind around the closed paths from $\Omega$.

\medskip
\subsection{The algebraic lamination dual to an $\R$-tree}
\label{dual-lamination}

${}^{}$
\smallskip

To any tree $T\in \cvnbar$ in \cite{CHL1-II} there has been naturally associated a
\emph{dual algebraic lamination} (also called the \emph{zero
  lamination}) $L^2(T)$ of $T$:

\begin{defn}
\label{L2}
Let $T$ be an $\R$-tree from $\cvnbar$.

\smallskip
\noindent
(1)
For any $\epsilon > 0$ let $\Omega_\epsilon(T)$ be the set of conjugacy classes 
$[w] \subset \FN \smallsetminus \{ 1 \}$ with translation length $|| w ||_T \leq \epsilon$, and let $L^2_\epsilon(T) := L^2(\Omega_\epsilon(T))$.

\smallskip
\noindent
(2) Define $L^2(T) := {\underset{\epsilon > 0}{\bigcap}} L^2_\epsilon(T)$.

\end{defn}

In \cite{CHL1-II} it has been shown that 
set $L^2(T)$ is an algebraic lamination on $F_N$, if $T \in \partial \cvn$, and $L^2(T) = \emptyset$ otherwise (i.e. $T \in \cvn$).
It has also been shown in \cite{CHL1-II}, for any basis $\cal A$ of $\FN$, that
$L^2(T)$ is precisely the set of all $(X,Y)\in \partial^2 F_N$ such that the associated reduced biinfinite word $Z = X^{-1} Y$ in $\cal A^{\pm 1}$ has the property that for every finite subword $w$ of $Z$ and any $\epsilon > 0$ there is a cyclically reduced word $v$ which contains $w$ as subword and satisfies $|| v ||_T \leq \epsilon$.

As a slight extension of the latter, we can formulate the criterion which alternatively could serve as definition for this paper:

\begin{rem}
\label{dual-lamination-criterion}
For any marked graph $\tau$ the  following characterization of $L^2(T)$ holds: 

A finite 
reduced path $\gamma'$ in $\tau$ is a subpath of the geodesic realization $\gamma_{\tau}(X,Y)$ for some $(X, Y) \in L^2(T)$ if and only if for any $\epsilon > 0$ there is an element $w \in \FN$ with translation length on $T$ of size $||w||_{T} \leq \epsilon$, such that the conjugacy class of $w$ in $\FN$ is represented by a reduced loop $\hat \gamma$ in $\tau$ which contains $\gamma'$ as subpath.
\end{rem}

\begin{rem}
\label{dual-lamination-projectivized}
We observe directly from the definition that the zero lamination of any $\R$-tree $T$ depends only on its projective class $[T]$, i.e. :
$$L^2(T) = L^2(\lambda T) \quad {\rm for \,\, any} \quad \lambda > 0 \, .$$
\end{rem}

For any $\R$-tree $T$ a point $x \in T$ is called a 
{\em branch point} if $T \smallsetminus \{x\}$ has 3 or more conected components.
It is well known 
and easy to see 
that every $T \in \partial \cvn$ either contains 
an ``edge''  (i.e. a
non-degenerate segment without branch points),
or else $T$ has {\em dense orbits}: This means that 
the
orbit $\FN x$ of 
any 
point $x \in T$ is a dense subset of $T$. Note that this means that in particular the set of 
{branch points} 
is dense in $T$.  Of course, such a tree $T \in \partial \cvn$ with dense orbits may well contain points $x$ with non-trivial point stabilizers ${\rm stab}(x) \subset \FN$.

\smallskip

For $T\in\cvnbar$ with dense $F_N$-orbits one can give an
equivalent characterization of $L^2(T)$ to that given in
Definition~\ref{L2} above. For such a tree $T$ there is
a canonical $F_N$-equivariant map $Q:\partial F_N\to \widehat T=\overline T\cup
\partial T$, constructed in~\cite{CHL1-II}, where 
$\overline T$ is the metric completion of $T$ and $\partial T$ denotes its Gromov boundary.
The precise nature of how the map $Q$ is defined is not relevant for the present paper. However, it is proved in~\cite{CHL1-II} that in this case for $(X,Y)\in
\partial^2 F_N$ we have $(X,Y)\in L^2(T)$ if and only if
$Q(X)=Q(Y)$. This fact immediately implies:

\begin{prop}
\label{dual-diag-closed}
Let $T\in\cvnbar$ have dense $F_N$-orbits. 
Then $L^2(T)$
is diagonally closed, that is $\diagclos(L^2(T))=L^2(T)$.
\qed
\end{prop}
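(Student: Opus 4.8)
The plan is to exploit the characterization of $L^2(T)$ via the canonical map $Q\colon \partial F_N \to \widehat T$ recalled immediately above the statement: since $T$ has dense $F_N$-orbits, for any $(X,Y) \in \partial^2 F_N$ we have $(X,Y) \in L^2(T)$ if and only if $Q(X) = Q(Y)$. With this in hand the proof becomes an elementary chaining argument, so there is essentially no serious obstacle; the work is entirely bookkeeping.

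First I would recall that $\diag(L^2(T)) \supseteq L^2(T)$ always holds, by Definition-Remark \ref{diagonal-closure}(b), so only the inclusion $\diag(L^2(T)) \subseteq L^2(T)$ needs proof. Then I would take an arbitrary $(X,Y) \in \diag(L^2(T))$ and unwind the definition: there exist an integer $m \geq 1$ and points $X = Z_0, Z_1, \dots, Z_m = Y$ in $\partial F_N$ with $(Z_{i-1}, Z_i) \in L^2(T)$ for $i = 1, \dots, m$. Applying the $Q$-criterion to each consecutive pair gives $Q(Z_{i-1}) = Q(Z_i)$ for every $i$, whence by transitivity $Q(X) = Q(Z_0) = Q(Z_m) = Q(Y)$.

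It remains only to observe that $X \neq Y$, which is automatic since $(X,Y) \in \partial^2 F_N$ and $\partial^2 F_N$ excludes the diagonal. Thus $(X,Y)$ is a pair in $\partial^2 F_N$ with $Q(X) = Q(Y)$, so $(X,Y) \in L^2(T)$ by the $Q$-criterion again. This proves $\diag(L^2(T)) \subseteq L^2(T)$, and combined with the reverse inclusion yields $\diag(L^2(T)) = L^2(T)$, i.e.\ $L^2(T)$ is diagonally closed. The only point worth flagging is that the argument genuinely uses the dense-orbits hypothesis, since the existence (and the equivalence property) of the map $Q$ is available precisely in that case.
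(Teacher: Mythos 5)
Your argument is exactly the paper's: the proposition is stated as an immediate consequence of the characterization $(X,Y)\in L^2(T)\iff Q(X)=Q(Y)$ for trees with dense orbits, and your chaining of $Q(Z_{i-1})=Q(Z_i)$ through the diagonal extension is precisely that implication spelled out. The proof is correct and follows the same route as the paper.
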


\medskip
\subsection{Iwip automorphisms}
\label{iwips}

${}^{}$
\smallskip

\begin{defn}
\label{defn:iwips}
\noindent (a)  An outer automorphism $\phi\in \Out(F_N)$ is called {\em irreducible with irreducible powers (iwip)} if no positive power of $\phi$ preserves the conjugacy class of a
proper free factor of $F_N$.

\smallskip
\noindent
(b)
An outer automorphism  $\phi\in \Out(F_N)$ is
called \emph{atoroidal} if it has no non-trivial periodic conjugacy classes, i.e. if there do not exist $t\ge 1$ and $w\in F_N-\{1\}$ such that
$\phi^t$ fixes the conjugacy class $[w]$ of $w$ in $F_N$.
\end{defn}

It was proved in \cite{BH} that for $N\ge 2$ an iwip automorphisms $\phi \in \Out(\FN)$ is non-atoroidal if and only if $\phi$ is induced, via an identification of $\FN$ with the fundamental group of a compact  surface $S$ with a single boundary component, by a pseudo-Anosov homeomorphisms $h: S \to S$.

\begin{rem}\label{defn:irr}
(1)
The terminology ``iwip'' derives from the groundbreaking paper \cite{BH}: Bestvina-Handel call
an element $\phi\in \Out(F_N)$ is \emph{reducible} if there
exists a 
non-trivial 
free product decomposition $F_N=C_1\ast\dots C_k\ast F'$,
where $k\ge 1$ and $C_i\ne \{1\}$, such that $\phi$ permutes the
conjugacy classes of subgroups $C_1,\dots, C_k$ in $F_N$. An element
$\phi\in \Out(F_N)$ is called \emph{irreducible} if it is not
reducible.

\smallskip
\noindent
(2)
It is not hard to see that an element $\phi\in \Out(F_N)$ is an
iwip if and only if for every $n\ge 1$ the power $\phi^n$ is irreducible (sometimes such automorphisms are also called \emph{fully irreducible}).

\smallskip
\noindent
(3)
An automorphisms $\psi$ of $\FN$ is called {\em hyperbolic} if its mapping torus group $\FN\rtimes_\psi \Z$ 
is word-hyperbolic.  It was proved by Brinkmann \cite{Br} that $\psi$ is hyperbolic if and only if no non-trivial conjugacy class of $\FN$ is fixed by a positive power of $\psi$. It has been shown in 
\cite{DKL} (see also \cite{Ka2013}) that for hyperbolic automorphisms the notion of irreducibility and full irreducibility are equivalent, so that $\phi \in \Out(\FN)$ is an atoroidal iwip if and only if $\phi$ is an irreducible hyperbolic automorphism.
\end{rem}

It is known by a result of 
Levitt-Lustig~\cite{LL} that
iwips have a simple ``North-South''
dynamics on the compactified Outer space $\CVNbar$:

\begin{prop}\label{prop:LL}\cite{LL}
Let $\phi\in \Out(F_N)$ be an iwip. Then there exist unique $[T_+]=[T_+(\phi)],[T_-]=[T_-(\phi)]\in \CVNbar$ with the following properties:
\begin{enumerate}
\item The elements $[T_+],[T_-]\in \CVNbar$ are the only fixed points of $\phi$ in $\CVNbar$.
\item For any $[T]\in \CVNbar$, $[T]\ne [T_-]$ we have $\lim_{n\to\infty} [T\phi^n]=[T_+]$ and for any $[T]\in \CVNbar$, $[T]\ne [T_+]$ we have $\lim_{n\to\infty} [T\phi^{-n}]=[T_-]$.

\item 
We have $T_+\phi=\lambda_+T$ and $T_-\phi^{-1}=\lambda_-T_-$   where $\lambda_+>1$ and $\lambda_->1$. 

\item
Both, $T_+$ and $T_-$ are ``intrinsically non-simplicial'' $\R$-trees:  Every $\FN$-orbit of points is dense in the tree.

\end{enumerate}
\end{prop}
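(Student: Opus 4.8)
Since this statement is quoted from \cite{LL}, I only outline how one would carry out the proof; the plan follows the strategy of Levitt and Lustig.

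\smallskip\noindent\textbf{Step 1: construction of $T_\pm$ and proof of (3).} By Bestvina--Handel \cite{BH}, the iwip $\phi$ --- or, after passing to a positive power and undoing this at the end, $\phi$ itself --- admits an irreducible train track representative $f_+:\Gamma_+\to\Gamma_+$, whose transition matrix is non-negative and irreducible, hence has a Perron--Frobenius eigenvalue $\lambda_+$ with a positive eigenvector; one has $\lambda_+>1$ because an iwip has infinite order. Metrize $\Gamma_+$ by the left eigenvector so that $f_+$ stretches every legal path by exactly $\lambda_+$, and regard $\Gamma_+$ as a point of $\cvn$ with translation length function $\|\cdot\|_{\Gamma_+}$. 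Using that a reduced loop in $\Gamma_+$ carries only boundedly many illegal turns, that iterating a train track map creates no new illegal turns, and the bounded cancellation lemma, one proves that for every $w\in\FN$ the limit
\[
\|w\|_{T_+} := \lim_{n\to\infty}\lambda_+^{-n}\,\|\phi^n(w)\|_{\Gamma_+}
\]
exists, is not identically zero (it is positive on any legal loop), and --- by the standard fact that a limit of rescaled translation length functions is again a translation length function --- is realized by a tree $T_+\in\cvnbar$; concretely $[T_+]=\lim_n[\Gamma_+\phi^n]$. The same displayed formula gives $\|\phi(w)\|_{T_+}=\lambda_+\|w\|_{T_+}$, i.e. $T_+\phi=\lambda_+T_+$. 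Running the construction for $\phi^{-1}$, with an irreducible train track representative $f_-:\Gamma_-\to\Gamma_-$ and eigenvalue $\lambda_->1$, produces $T_-\in\cvnbar$ with $T_-\phi^{-1}=\lambda_-T_-$. This proves (3); it also shows $[T_+]\neq[T_-]$, since $[T_+]=[T_-]$ would force $\lambda_+=\lambda_-^{-1}<1$.

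\smallskip\noindent\textbf{Step 2: North--South dynamics (2).} Fix $[T]\in\CVNbar$ with $[T]\neq[T_-]$. Since $\|w\|_{T\phi^n}=\|\phi^n(w)\|_T$, the assertion $[T\phi^n]\to[T_+]$ is equivalent to saying that for every $w$ the quantity $\lambda_+^{-n}\|\phi^n(w)\|_T$ converges to $c(T)\,\|w\|_{T_+}$ for one constant $c(T)>0$ not depending on $w$. To attack this, realize $w$ as a reduced loop in $\Gamma_+$ and split it into its maximal legal subsegments $\sigma_1,\dots,\sigma_k$. Applying $f_+^n$ stretches each $\sigma_j$ by $\lambda_+^n$, while tightening $f_+^n(w)$ in $\Gamma_+$ and then realizing it as a geodesic in $T$ through the $\FN$-equivariant morphism $\widetilde{\Gamma}_{+}\to T$ alters $\|\phi^n(w)\|_T$ only by an amount that stays bounded as $n\to\infty$ (bounded cancellation, applied at the boundedly many illegal turns). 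Hence $\lambda_+^{-n}\|\phi^n(w)\|_T$ differs by $o(1)$ from $\sum_{j=1}^{k}\lambda_+^{-n}\,\ell_T(f_+^n(\sigma_j))$, and everything reduces to showing that $\lambda_+^{-n}\ell_T(f_+^n(\sigma))$ converges, for every legal path $\sigma$, to $c(T)\,\|\sigma\|_{T_+}$, with a proportionality constant $c(T)$ that is strictly positive precisely when $[T]\neq[T_-]$. The legal paths $f_+^n(\sigma)$ converge to leaves of the stable lamination $L^+$ of $f_+$, so the limit is governed by how $T$ measures $L^+$; this is non-degenerate unless $L^+\subseteq L^2(T)$, and the content of the step is that for an iwip the latter condition singles out the single projective class $[T_-]$. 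The backward statement $[T\phi^{-n}]\to[T_-]$ for $[T]\neq[T_+]$ is the mirror argument, with $\phi^{-1}$ and $f_-$ in place of $\phi$ and $f_+$.

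\smallskip\noindent\textbf{Step 3: uniqueness of fixed points (1) and density of orbits (4).} If $[T]\in\CVNbar$ is fixed by $\phi$, then $\{[T\phi^n]\}$ is the single point $[T]$, so by Step 2 either $[T]=[T_-]$ or $[T]=\lim_n[T\phi^n]=[T_+]$; hence $[T_+]$ and $[T_-]$ are the only fixed points. For (4): since $T_+\phi=\lambda_+T_+$ with $\lambda_+>1$, the automorphism $\phi$ acts on $T_+$ as a homothety of ratio $\lambda_+$; it therefore preserves the canonical decomposition of $T_+$ into its maximal simplicial part and its part with dense orbits, and permutes the finitely many $\FN$-orbits of edges of the simplicial part while multiplying all their lengths by $\lambda_+$. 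Since these lengths form a finite set of positive reals, this is impossible unless the simplicial part is empty; thus every $\FN$-orbit in $T_+$ is dense, and likewise for $T_-$.

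\smallskip\noindent\textbf{Main obstacle.} The weight of the argument is in the estimate of Step 2: proving that $\lambda_+^{-n}\ell_T(f_+^n(\sigma))$ has a limit that depends linearly on $\sigma$ through $\|\cdot\|_{T_+}$, uniformly over legal paths $\sigma$, and that the constant $c(T)$ degenerates only for $[T]=[T_-]$. This needs bounded cancellation for morphisms onto $\R$-trees with dense $\FN$-orbits (where the target is far from locally finite), together with enough structural control on the stable lamination $L^+$ of the iwip to conclude that $[T_-]$ is the only projective class of very small trees whose dual lamination contains $L^+$.
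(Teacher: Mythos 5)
The paper does not prove this proposition; it is quoted verbatim from Levitt--Lustig \cite{LL}, so there is no in-paper argument to compare against. Your outline faithfully reproduces the strategy of \cite{LL}: construction of $T_\pm$ as rescaled limits $\lambda_+^{-n}\|\phi^n(\cdot)\|_{\Gamma_+}$ from a train track representative (giving (3)), North--South dynamics via the convergence of $\lambda_+^{-n}\ell_T(f_+^n(\sigma))$ on legal paths, and the derivation of (1) and (4) from (2) and the homothety property. The only point to flag is that the entire analytic content of the theorem sits inside your Step 2 --- both the existence of the limit $c(T)$ for arbitrary very small targets $T$ (which requires bounded cancellation for equivariant maps onto non-locally-finite $\R$-trees) and, above all, the claim that $c(T)=0$ forces $[T]=[T_-]$; you correctly isolate this as the main obstacle but do not supply an argument for it, and note also that when $c(T)=0$ the normalization $\lambda_+^{-n}$ is the wrong one, so the asserted equivalence in the first sentence of Step 2 is really only an implication in the direction you need. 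As a proof sketch of a cited theorem this is accurate and well organized; as a self-contained proof it would still owe the reader the degeneracy analysis of \cite{LL}.
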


In~\cite{LL} it is also proved that convergence in (2) is locally
uniform and hence uniform on compact subsets. 
For more information see \cite{KL6}.
More details about 
$T_+$ in terms of train tracks 
are given below in 
subsection \ref{limit-tree}.

\begin{rem}
\label{powers-trees}
It follows directly from Proposition \ref{prop:LL} that both, $T_+$ and $T_-$, are invariant under passing to iterates of $\phi$:
$$T_+(\phi^t) = T_+(\phi) \quad {\rm and} \quad T_-(\phi^t) = T_-(\phi) \quad {\rm for \, \, any \, \, integer} \quad t \geq 1 \, .$$
\end{rem}

\section{Train track technology}
\label{train-track-section}

This section will be a reference section, organized by subheaders as a kind of glossary. Almost everything in this section is known, or within an $\epsilon$-neighborhood of known facts. It is only for the convenience of the reader that we assemble in this section what is needed later from basic train track theory.

The expert reader is encouraged to completely skip this section and only go back to it if he or she needs an additional explanation in the course of reading the later sections.

\begin{notation}
In order to avoid confusion, in this section the automorphisms in question will be denoted by $\alpha$, as in the later sections we will have to consider both cases, $\alpha =\phi$ and $\alpha = \phi^{-1}$. Similarly, a train track map will be called $f: \tau \to \tau$ (to be specified later to $f_+: \tau_+ \to \tau_+$ or $f_-: \tau_- \to \tau_-$, the forward limit tree will be simply called $T$ (rather than $T_+$ or $T_-$), etc.
\end{notation}

\medskip
\subsection{Graphs, paths and graph maps}
\label{tt-maps}

${}^{}$
\smallskip

In this paper $\tau$ will always denote a graph, 
specified as follows:

\begin{convention}
\label{graphs}
By a {\em graph} $\tau$ we mean a non-empty topological space which is equipped with a cell structure, consisting of vertices and edges. Furthermore, $\tau$ satisfies the following conditions, unless explicitly otherwise stated:
\begin{itemize}
\item
$\tau$  is connected.
\item
$\tau$ is a finite graph, i.e. it consists of finitely many vertices and edges.

(Of course, the universal covering $\tilde \tau$ of a finite graph $\tau$ is in general not finite, but this counts as ``explicitly stated exception''.) 
\item
There are no vertices of valence 1 in $\tau$ (but we do admit vertices of valence 2).
\end{itemize}
\end{convention}
 
We systematically denote vertices of $\tau$ by $v, v'$ or $v_i$, and edges by $e, e', e_j$ etc. A point which can be a vertex or an interior point of an edge is usually denoted by $P$ or $Q$.  

An edge $e$ of $\tau$ is always oriented, and we denote by $\bar e$ the conversely oriented edge. 
In an {\em edge path} $\ldots e_k e_{k+1} e_{k+2} \ldots$ we always write the edges $e_i$ in the direction in which the path runs. We occasionally use ${\rm Edges}(\tau)$ to denote the {\em edge set} of $\tau$. We use the convention that its elements are, as before, {\em oriented} edges, so that the edge set ${\rm Edges}(\tau)$ contains for each pair $e, \bar e$ only the element $e$. 
If need be, we use the notation ${\rm \overline{Edges}}(\tau) := \{ \bar e \mid e \in {\rm Edges}(\tau) \}$.
Note that one is free, at any convenient time, to reorient edges: this operation does not change the graph, in our understanding, only the extra information about notation of edges. 

However: to be specific: if we say ``an edge of $\tau$'', this may well be the edge $\bar e$, for $e \in {\rm Edges}(\tau)$.

\smallskip

We will denote the {\em simplicial length} of an edge path $\gamma$, i.e. the number of edges traversed by $\gamma$, with $L(\gamma)$.  For the edge path $\bar \gamma$, by which we mean the path $\gamma$ with reversed orientation, one has of course $L(\bar \gamma) = L(\gamma)$.  In particular, for any single edge $e$ one has $L(e) = L(\bar e) = 1$.

\begin{convention-defn}
\label{paths}
(a) If we use the terminology {\em path} without further specification, we always mean a finite path.  The reader is free to formalize paths according to his or her own preferences; in most cases we recommend the viewpoint where a path $\gamma$ means an {\em edge path} $\gamma = e_1 \circ e_2 \circ \ldots \circ e_q$, where the $e_i$ are edges of $\tau$ such that the terminal vertex of any $e_i$ coincides with the initial vertex of $e_{i+1}$. According to circumstances, we generalize this concept sometimes slightly by allowing that $e_1$ (or $e_q$) is a non-degenerate terminal (or initial) segment of an edge (or, if $q = 1$, any segment of a single edge).

A path $\gamma$ is called {\em trivial} or {\em degenerate} if it consists of a single point.
Otherwise, $\gamma$ is said to be {\em non-trivial} or {\em non-degenerate}.

We will also sometimes suppress the distinction between an edge (or edge segment) $e$ and a path which traverses precisely $e$.

\smallskip
\noindent
(b)
A path $\gamma'$ is called a {\em backtracking path} if its endpoints coincide, and if the issuing loop is contractible in $\tau$. If $\gamma'$ occurs as subpath of a path $\gamma$,
then $\gamma'$ is called a {\em backtracking subpath} of $\gamma$.

\smallskip
\noindent
(c)
A (possibly infinite or biinfinite) path $\gamma$ is {\em reduced} if it doesn't contain any 
non-degenerate 
backtracking subpath. We denote by $[\gamma]$ the reduced path obtained from a possibly non-reduced path $\gamma$ by iteratively contracting all non-degenerate backtracking subpaths.
(It is well known that $[\gamma]$ depends only on $\gamma$ and not on the sequence of iteratively contracted backtracking subpaths,
if $\gamma$ is finite, or if $\gamma$ is the image of a reduced infinite or biinfinite path under a homotopy equivalence of $\tau$).

\smallskip
\noindent
(d)
Two subpaths $\gamma_1$ and $\gamma_2$ of a path $\gamma$ are {\em disjoint
on $\gamma$} if in the course of traversing $\gamma$ one first traverses $\gamma_1$ completely without starting $\gamma_2$, or conversely. Similarly we define the {\em overlap} of the two subpaths as the maximal subpath of $\gamma$ which is also a subpath of both $\gamma_i$.
\end{convention-defn}

Recall from the beginning of section \ref{new-background} that
a finite connected graph $\tau$ is called a {\em marked graph}, if it is equipped with a {\em marking isomorphism} $\theta: \FN \overset{\cong}{\longrightarrow} \pi_1(\tau)$.  Recall also that we purposefully suppress the issue of choosing a basepoint of $\tau$, as 
we are 
mainly
interested in automorphims of $\FN$ up to inner automorphisms.

\begin{convention-defn}
\label{maps-of-graphs}
(a) A {\em map} $f: \tau \to \tau'$ between graphs $\tau$ and $\tau'$
will always map vertices to vertices and edges to edge paths, which a priori may be non-reduced.

\smallskip
\noindent
(b)
A self-map $f: \tau \to \tau$ of a graph $\tau$, provided with a {marking isomorphism} $\theta: \FN \to \pi_1(\tau)$, {\em represents} an automorphism $\alpha$ of $\FN$ if the induced automorphisms $f_*: \pi_1 (\tau) \to \pi_1 (\tau)$ satisfies $\theta \circ \alpha = f_* \circ \theta$ up to inner automorphisms.  
\end{convention-defn}

\begin{rem}
\label{bounded-back}
It is well known \cite{Co} that for any map $f: \tau \to \tau'$  between graphs $\tau$ and $\tau'$, which induces an isomorphism on $\pi_1(\tau)$ (or, for the matter, a monomorphism), there is an upper bound to the length of any backtracking path $\gamma'$ which is contained as subpath in the (non-reduced) image $f(\gamma)$ of a reduced path $\gamma$ in $\tau$.
\end{rem}

\begin{defn}
\label{expanding-and-D}
(a)
A self-map $f: \tau \to \tau$ is called {\em expanding} if for every edge $e$ of $\tau$ there is an exponent $t \geq 1$ such that the edge path $f^t(e)$ has length $L(f^t(e)) \geq 2$.

\smallskip
\noindent
(b)
If $f: \tau  \to \tau$ is expanding, then there is a well defined self-map $Df$ on the set
${\rm Edges}(\tau) \cup \overline{\rm Edges}(\tau)$
which associates to every edge $e$ 
the initial edge of the edge path $f(e)$. 
\end{defn}

\begin{rem}
\label{make-expanding}
(1) If a self-map $f: \tau \to \tau$ represents an iwip automorphism of $\FN$, then the hypothesis that $f$ be expanding is always easy to satisfy:  It suffices to contract all edges which are not expanded by any iterate $f^t$ to an edge path of length $\geq 2$:  The issuing contracted subgraph must be a forest, as otherwise some $f^t$ will fix (up to conjugacy) a non-trivial proper free factor of $\pi_1(\tau)$.

\smallskip
\noindent
(2)
Alternatively, 
one can reparamatrize the map $f$ along each edge so that it becomes a true homothety with respect to the Perron-Frobenius metric 
defined as follows:  The {\em transition matrix} $M(f) = (m_{e, e'})_{e, e' \in Edges(\tau)}$ of $f$ is defined by setting $m_{e, e'}$ to be the number of times that $f(e')$ crosses over $e$ or over $\bar e$ (in both cases counted positively !). We will see in the subsequent lemma that this non-negative matrix is always primitive, so that there is up to positive multiples only one real row eigenvector of $M(f)$ with positive coefficients.  This {\em Perron-Frobenius} eigenvector $\vec v^{\,*} = (v_e)_{e \in Edges(\tau)}$ can be used to define a {\em Perron-Frobenius length} $L^{PF}(e) := v_e$ for all edges of $\tau$.
For more details see for example \cite{Lu2}, \S 3. 
\end{rem}

Since in this paper we will concentrate on self-maps of graphs that represent iwip automorphisms, we will almost exclusively consider self-maps $f: \tau \to \tau$ that are expanding. 

\begin{lem}
\label{invariant-subgraph}
Let the self-map $f: \tau \to \tau$ of the graph $\tau$ be expanding, and assume that $f$ represents an iwip automorphism of $\FN$. Then we have:
\begin{enumerate}
\item
Every $f$-invariant subgraph $\tau_0 \subset \tau$ is either a single vertex 
(or a collection of vertices, if one relaxes the convention that a graph is connected), or equal to all of $\tau$.
\item
The transition matrix $M(f)$ of $f$ is primitive.
\item
For every edge $e$ there exists an exponent $t \geq 0$ such that the (possibly non-reduced) edge path $f^t(e)$ crosses three or more times over $e$ or $\bar e$.
\end{enumerate}
\end{lem}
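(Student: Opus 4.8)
The two parts are two faces of one fact: (1) says the transition matrix of $f$ on the set of unoriented edges is irreducible in the Perron--Frobenius sense, while (2) is the quantitative refinement one gets by additionally feeding in that $f$ is expanding. Accordingly the plan is to prove (1) first, by the classical ``invariant free factor'' argument, and then to deduce (2) from (1) by an elementary counting argument with the transition matrix. Throughout, ``subgraph'' is understood in the sense of Convention \ref{graphs} (in particular, no valence-one vertices), so that every component carrying an edge has nontrivial fundamental group; an $f$-invariant forest with an edge, if one insists on allowing it, is collapsed beforehand and then causes no harm.

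\textbf{Part (1).} Suppose towards a contradiction that $\tau_0\subsetneq\tau$ is an $f$-invariant subgraph containing an edge. Since $f$ is expanding it collapses no edge, so $f$ maps each edge-carrying component of $\tau_0$ into an edge-carrying component of $\tau_0$; the induced self-map of this finite set of components is eventually periodic, so after replacing $f$ by a suitable power and $\tau_0$ by one of its components we may assume $\tau_0$ is connected and $f$-invariant, still with an edge. Then $C:=\pi_1(\tau_0)$ is a nontrivial free factor of $\pi_1(\tau)=\FN$, and it is \emph{proper}: since $\tau$ has no valence-one vertex, $\pi_1(\tau_0)=\FN$ would force $\tau_0=\tau$. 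As $f$ is a homotopy equivalence, $f_*\in\Aut(\FN)$, and $f(\tau_0)\subseteq\tau_0$ gives $f_*(C)\le gCg\inv$ for some $g\in\FN$. Being the image of the free factor $C$ under the automorphism $f_*$, the subgroup $f_*(C)$ is again a free factor of $\FN$; being contained in $gCg\inv$, it is, by the Kurosh subgroup theorem, a free factor of $gCg\inv$; and since $f_*$ is injective, $\mathrm{rk}\,f_*(C)=\mathrm{rk}\,C=\mathrm{rk}(gCg\inv)$, whence $f_*(C)=gCg\inv$. Thus a positive power of $\phi$ preserves the conjugacy class of the proper free factor $C$, contradicting Definition \ref{defn:iwips}(a). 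So no such $\tau_0$ exists.

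\textbf{Part (2).} Let $E$ be the set of unoriented edges of $\tau$ and $M$ the transition matrix, with $M_{e'e}$ the number of times the path $f(e)$ runs over $e'$ or $\bar e'$; then $(M^t)_{e'e}$ is the corresponding count for $f^t(e)$. By (1) there is no proper $f$-invariant edge-subgraph, which is precisely the irreducibility of $M$: for all $e,e'\in E$ there is $n\ge 0$ with $(M^n)_{e'e}\ge 1$. Also $M$ is not a permutation matrix, since $f$ is expanding; together with irreducibility and the monotonicity $L(f(\gamma))\ge L(\gamma)$ this forces $L(f^t(e))=\sum_{e'\in E}(M^t)_{e'e}\to\infty$ for every $e\in E$. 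Now fix $e\in E$ and choose $t$ with $L(f^t(e))>3\,|E|$; by the pigeonhole principle $f^t(e)$ runs at least three times over some $e'$ or $\bar e'$, i.e. $(M^t)_{e'e}\ge 3$. Pick $s\ge 0$ with $(M^s)_{ee'}\ge 1$ (by irreducibility, taking $s=0$ if $e'=e$). Since $M$ has non-negative entries, $(M^{s+t})_{ee}\ge (M^s)_{ee'}(M^t)_{e'e}\ge 3$, which says precisely that $f^{s+t}(e)$ runs at least three times over $e$ or $\bar e$. This proves (2).

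\textbf{Main obstacle.} The one step that is not routine bookkeeping is the passage, inside (1), from ``$f_*$ sends the proper free factor $C$ \emph{into} a conjugate of itself'' to ``$f_*$ \emph{preserves the conjugacy class} of $C$'': merely mapping a proper free factor into a subgroup of one of its conjugates is not, by itself, incompatible with $\phi$ being an iwip. The gap is closed by the two standard facts used above --- the automorphic image of a free factor is a free factor, and a free factor of $\FN$ contained in a subgroup $H$ is a free factor of $H$ (Kurosh) --- combined with rank-preservation of $f_*$. Given that, the remaining ingredients (the reduction to a connected invariant subgraph via a power of $f$, and the counting argument in (2)) are straightforward.
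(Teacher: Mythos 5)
Your proof is correct, and for part (1) it follows the paper's route: an $f$-invariant subgraph with an edge yields a $\phi$-invariant conjugacy class of a proper free factor, contradicting the iwip hypothesis. You actually supply a step the paper leaves implicit — the passage from $f_*(C)\le gCg^{-1}$ to $f_*(C)=gCg^{-1}$ via ``automorphic images of free factors are free factors'', Kurosh, and rank preservation — which is exactly the point where the paper's terse ``from the iwip hypothesis it follows that the inclusion induces an isomorphism on $\pi_1$'' hides the work. For part (2) the paper argues differently in form but not in substance: it first produces one edge with the self-crossing property, then takes a minimal $f$-invariant subgraph containing such an edge and shows the ``bad'' edges in it would form a smaller invariant subcomplex, finally invoking part (1); your version encodes the same information in the transition matrix, where part (1) becomes irreducibility and the propagation step becomes the inequality $(M^{s+t})_{ee}\ge (M^s)_{ee'}(M^t)_{e'e}$. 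The matrix formulation is arguably cleaner and easier to verify. The one place where both you and the paper wave hands equally is the possibility of an $f$-invariant forest containing an edge (the paper asserts without proof that expansiveness forces $\pi_1(\tau_0)\ne 1$); since this does not affect the intended application, it is a shared, minor omission rather than a gap specific to your argument.
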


\begin{proof}
(1)
Assume that $\tau_0 \subset \tau$ contains at least one edge. From the assumptions that $f$ is expanding, and that $f(\tau_0) \subset \tau_0$, it follows that $\pi_1 (\tau_0)$ is non-trivial.
From the iwip hypothesis it follows that 
the inclusion $\tau_0 \subset \tau$ induces an isomorphisms on $\pi_1$. Hence our convention on graphs (see subsection \ref{tt-maps}), that $\tau$ is finite and does not contain vertices of valence 1, implies $\tau_0 = \tau$.

\smallskip
\noindent
(2)
If $M(f)$ is not primitive, then some positive power of it is reducible, which means precisely that there is an invariant proper subgraph. But we proved in (1) that this is impossible for $f$; since positive powers of expanding train track maps are again expanding train track maps, and powers of iwips are again iwips, the proof of (1) applies to all $f^t$ with $t \geq 1$.

\smallskip
\noindent
(3)
This is a direct consequence of the well known fact that for a primitive matrix the size of every coefficient tends to $\infty$ when taking large powers.
\end{proof}

\medskip

\subsection{Gates, turns, train tracks}
\label{gates-turns-tts}

${}^{}$
\smallskip

\begin{defn}
\label{gates}
(a)
For any expanding self-map of graphs $f: \tau \to \tau$ and any vertex $v$ of $\tau$ one partitions the edges with initial vertex $v$ into equivalence classes, called {\em gates} $\frak g_i$, by the following rule:  Two edges $e_1$ and $e_2$, both with initial vertex $v$, belong to a common gate $\frak g_i$ if and only if 
there is 
an exponent $t \geq 1$, such that $Df^t(e_1) = Df^t(e_2)$. To be specific, in case of a {\em loop edge} $e$ at $v$ (i.e. the initial and the terminal vertex of $e$ both coincide with $v$) the edges $e$ and $\bar e$ count as distinct edges with initial vertex $v$, which can or cannot belong to the same gate at $v$.

\smallskip
\noindent
(b)
This definition is sometimes extended to points $P$ in the interior of an edge $e$, at which there are precisely two gates, one containing precisely the terminal segment $e''$ of $e$ which starts at $P$, and the other one containing precisely the edge segment $\bar e'$, where $e'$ is the initial segment of $e$ which terminates at $P$.
\end{defn}

\begin{rem}
\label{gates1}
For any vertex (or point $P$) of $\tau$ the map $f$ induces via the map $Df$ 
a map 
$f^P_\frak G$ 
from the gates at $P$ to the gates at $f(P)$. It follows directly from Definition \ref{gates} (a) that this map $f^P_\frak G$ is injective, and hence, if $P$ is a periodic point, that $f^P_\frak G$ is bijective.
\end{rem}

\begin{defn}
\label{turns}
(a)
A pair of edges of $\tau$ forms a {\em turn} $(e, e')$ at a vertex $v$ if and only if both, $e$ and $e'$, have $v$ as initial vertex.  The turn is said to be {\em degenerated} if $e = e'$. Otherwise it is called {\em non-degenerate}.

\smallskip
\noindent
(b)
A path $\gamma$ {\em crosses over a turn} $(e, e')$ (or {\em contains the turn} $(e, e')$) if $\gamma$ contains the edge path $\bar e \circ e'$  as subpath (or a path $\bar e_0 \circ e_0'$ for non-degenerate initial segments $e_0$ of $e$ and $e'_0$ of $e'$).
\end{defn}

Note that a path $\gamma$ is reduced if and only of it doesn't cross over any degenerate turn.

\begin{defn}
\label{legal-paths}
Let $f: \tau \to \tau$ be an expanding self-map of a graph $\tau$.
\begin{enumerate}
\item
The map $f$ induces canonically
a map $D^2 f$ on turns, by setting $D^2((e, e')) = (Df(e), Df(e'))$.

\item
A turn $(e, e')$ is called {\em legal} if for all $t \geq 0$ the image turn $D^2f^t((e, e'))$ is non-degenerate. Otherwise $(e, e')$ is called {\em illegal}. In particular, all degenerate turns are illegal.  

A path (or loop) $\gamma$ in $\tau$ is called {\em legal} if it crosses only over legal turns. Otherwise it is called {\em illegal}.

\smallskip
\noindent
(From Definition \ref{gates} it follows directly that the turn $(e, e')$ is legal if and only if $e$ and $e'$ belong to distinct gates at their common initial vertex.)

\item
The map $f$ 
is called a {\em train track map} 
if for every edge $e$ of $\tau$ the edge path $f(e)$ is legal.  (It is easy to see that this condition is equivalent to the requirement that $f$ maps any legal path $\gamma$ to a legal path $f(\gamma)$.)
\end{enumerate}
\end{defn}

\begin{rem}
\label{several-legal}
(a)
It follows directly from the Definition \ref{legal-paths} (2) that $D^2f$ maps legal turns to legal turns, and illegal turns to illegal turns.

\smallskip
\noindent
(b)
If $f: \tau \to \tau$ is not expanding, then one can use the following definition of {\em legal paths}, which in the expanding case is equivalent to the one given in Definition \ref{legal-paths} (2):

{\em A reduced path $\gamma$ in $\tau$ is said to be {legal} if 
the maps $f^t$ are locally injective along $\gamma$, for all $t \geq 1$.}

\smallskip
\noindent
(c)
In some circumstances it can be useful to consider more general ``train track maps'', i.e. self-maps of graphs which have the {\em train track property} that legal maps are mapped to legal paths (where ``legal paths'' are defined as in part (b) above).  However, in this paper we insist on the assumption that a train track map $f: \tau \to \tau$ is always expanding ! 

\end{rem}

We now state a crucial property of train track maps, which follows directly from the Definition \ref{legal-paths}.
Recall from Definition-Convention \ref{paths} (c) that $[\gamma]$ denotes the path obtained from reducing a given path $\gamma$.

\begin{rem}
\label{ILT-defn}
For any train track map $f: \tau \to \tau$ and any path $\gamma$ in $\tau$ 
the number of illegal turns which are crossed over by $\gamma$, denoted by $\ILT(\gamma)$, satisfies:
$$\ILT([f(\gamma)]) \leq \ILT(f(\gamma)) = \ILT(\gamma)$$
Of course, a path $\gamma$ is legal if and only if $\ILT(\gamma) = 0$.
\end{rem}

\begin{lem}
\label{2-gates}
If $f: \tau \to \tau$ is a train track map which represents an iwip automorphism of $\FN$,
then at every vertex of $\tau$ there are at least 2 gates.
\end{lem}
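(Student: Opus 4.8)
The plan is to argue by contradiction: suppose some vertex $v$ of $\tau$ has exactly one gate. First I would observe that a vertex with zero gates is impossible (it would have no edges, contradicting Convention \ref{graphs} and connectedness), so the only alternative to ``at least $2$ gates'' is ``exactly $1$ gate''. By Remark \ref{gates1}, the map $f^P_{\frak G}$ on gates is injective at every point and bijective at periodic points; since $\tau$ is finite, every vertex is pre-periodic under $f$, and a short argument shows that if some vertex in the forward orbit of $v$ has one gate then so does the periodic vertex it eventually maps to. Hence we may assume $v$ itself is periodic, and after replacing $f$ by a power (which is still a train track map representing an iwip, by Remark \ref{defn:irr} and the fact that powers of train track maps on the same graph are train track maps) we may assume $f(v)=v$ and that the unique gate at $v$ is fixed.

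Next I would extract the combinatorial consequence of having a single gate at $v$. If $\frak g$ is the only gate at $v$, then by definition of legal turn (Definition \ref{legal-paths}(2), together with the parenthetical remark there) \emph{every} turn at $v$ is illegal. Now consider any edge $e$ with terminal vertex $v$ (such an edge exists: $v$ has valence $\geq 2$, and if all edges at $v$ had $v$ only as an initial vertex the graph could not be connected with $N\geq 2$ — in any case, passing to $\bar e$ we get an edge ending at $v$). The train track map $f$ sends the reduced edge path $e$ to a legal path $f(e)$, and more generally $f^t(e)$ is legal and reduced for all $t$. But the turn at $v$ created when we concatenate $f^t(e)$ with $f^t$ of any edge leaving $v$ is illegal, so legality alone does not immediately bite. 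Instead I would use Lemma \ref{invariant-subgraph}(2): for the edge $e$ ending at $v$ there is an exponent $t$ with $f^t(e)$ crossing three or more times over $e$ or $\bar e$; at the points where $f^t(e)$ enters and leaves $v$ it must cross turns at $v$, and I would track how these turns behave under further iteration to produce a contradiction with the train track property — specifically, a reduced legal path $f^{t}(e)$ whose further image $[f(f^t(e))]$ must remain legal and reduced, yet is forced to cross an illegal turn at $v$ because all turns at $v$ are illegal and the path genuinely passes through $v$ while changing direction there.

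The cleanest way to close the argument, which I would adopt, is the following: having reduced to $f(v)=v$ with one gate at $v$, every reduced path through $v$ crosses an illegal turn at $v$; in particular no reduced path passing through $v$ is legal. On the other hand, the train track map $f$ expands: for the edge $e$ incident to $v$, the path $[f^t(e)]$ is legal for all $t$ (since $f$ is a train track map and $e$, being a single edge, is legal), has endpoint $f^t(v)=v$, and by expansiveness (Lemma \ref{invariant-subgraph}(2)) eventually has length $\geq 2$ and passes through $v$ in its interior — contradiction, because a legal path of length $\geq 2$ through $v$ would have to cross a (necessarily illegal) turn at $v$. The main obstacle is the bookkeeping in the reduction to a periodic vertex with a fixed single gate and, within that, making precise the claim that an interior occurrence of $v$ in a long legal image path forces a turn at $v$ to be crossed — this requires being careful that $f^t(e)$, being reduced and of length $\geq 2$ with $v$ as an endpoint, genuinely revisits $v$ in its interior (which is exactly what Lemma \ref{invariant-subgraph}(2) guarantees via the ``three or more crossings over $e$ or $\bar e$'' statement, since each such crossing routes the path through $v$).
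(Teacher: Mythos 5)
Your core argument is, at bottom, the paper's own proof run as a contradiction instead of directly: Lemma \ref{invariant-subgraph}~(2) produces an interior crossing of $e$ or $\bar e$ inside the legal path $f^t(e)$, hence a turn at $v$ that is crossed by a legal path and therefore joins two distinct gates. That part is correct, and you are right to flag (and resolve via the ``three or more crossings'' clause) the point that length $\geq 2$ alone would not force an interior visit to $v$.

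However, your preliminary reduction to a periodic vertex with a fixed gate is both unnecessary and, as stated, incorrect. You assert that ``if some vertex in the forward orbit of $v$ has one gate then so does the periodic vertex it eventually maps to,'' citing the injectivity of $f^P_{\frak G}$. Injectivity gives the opposite monotonicity: it says the number of gates at $f(v)$ is \emph{at least} the number of gates at $v$, so a one-gate vertex may perfectly well map forward onto a vertex with two or more gates, and no short argument closes this direction. Fortunately you never need $v$ to be periodic. The hypothesis ``$v$ has exactly one gate'' already implies ``every turn at $v$ is illegal'' by the parenthetical in Definition \ref{legal-paths}~(2), with no assumption on $f(v)$; and Lemma \ref{invariant-subgraph}~(2), applied to an edge $e$ with initial vertex $v$, already supplies a legal path $f^t(e)$ crossing a turn at $v$ at an interior occurrence of $e$ or $\bar e$. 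The endpoint behaviour of $f^t(e)$ --- the only thing periodicity would buy you --- plays no role in the contradiction. Deleting the reduction (and the passage to a power of $f$) leaves a correct proof, which is then precisely the paper's argument phrased contrapositively.
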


\begin{proof}
Let $v$ be any vertex of $\tau$, and let $e$ be an edge with initial vertex $v$. Since $f$ is a train track map, the path $f^t(e)$ must be legal, for any $t \geq 0$. By Lemma \ref{invariant-subgraph} (3) for some value of $t$ the path $f^t(e)$ contains $e$ (or $\bar e$) as interior edge. Let $e'$ be the edge adjacent to this occurrence of $e$ (or $\bar e$) on $f^t(e)$, so that $e' e$ or $\bar e \,\bar e'$ is a subpath of $f^t(e)$.  By Definition \ref{legal-paths} (2) the edges $\bar e'$ and $e$ belong to distinct gates (at the vertex $v$).
\end{proof}

The following is one of the fundamental results for automorphisms of free groups (slightly adapted to the language specified in this section):

\begin{thm}[\cite{BH}]
\label{Bestvina-Handel}
For every iwip automorphism $\alpha$ of $\FN$ there exists a train track map $f: \tau \to \tau$ that represents $\alpha$.
\end{thm}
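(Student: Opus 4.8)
The plan is to follow the original Bestvina-Handel algorithm, adapted to the conventions fixed above: starting from an arbitrary marked graph carrying a topological representative of $\alpha$, one performs a finite sequence of elementary moves that strictly decreases a complexity invariant until the resulting map has the train track property. First I would choose any marked graph $\tau_0$ with a marking $\theta\colon\FN\to\pi_1(\tau_0)$ and a homotopy equivalence $f_0\colon\tau_0\to\tau_0$ representing $\alpha$ (for instance the rose $\rho_{\cal A}$ with the obvious map), and after tightening we may assume $f_0$ sends edges to reduced edge paths. The transition matrix $M(f_0)$ records the number of times $f_0(e)$ crosses each edge; since $\alpha$ is an iwip, after collapsing the (forest) subgraph of non-expanded edges as in Remark \ref{make-expanding} we may assume $f_0$ is expanding and $M(f_0)$ is a Perron-Frobenius matrix, with Perron-Frobenius eigenvalue $\lambda\ge 1$. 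The quantity to be decreased is the pair $(\lambda,\,\#\{\text{illegal turns realized in images of edges}\})$ in lexicographic order.

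Next I would run the two fundamental moves. If $f$ is not yet a train track map, some edge path $f(e)$ crosses an illegal turn; equivalently there is a vertex $v$ with two edges $e_1,e_2$ in the same gate but $Df(e_1)=Df(e_2)$ already after one iterate — more precisely, there is an iterate $t$ and a ``gate collision'' $Df^t(e_1)=Df^t(e_2)$ that becomes visible as a backtracking in some $f^k(e)$. When such an illegal turn is ``taken'' by an edge image, one performs a \emph{folding move}: identify maximal common initial segments of $f(e_1)$ and $f(e_2)$, obtaining a new graph and a new homotopy-equivalent representative $f'$ with $M(f')$ still Perron-Frobenius but with $\lambda(f')\le\lambda(f)$, and strict decrease in the number of illegal turns taken when $\lambda$ is unchanged. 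Folding may introduce valence-one or valence-two vertices and may create invalid configurations; these are cleaned up by \emph{collapsing} the pre-trivial forest and by \emph{subdividing}/\emph{valence-two homotopies}, and one checks, using Lemma \ref{invariant-subgraph}, that the iwip hypothesis prevents the process from collapsing onto a proper invariant subgraph — any $f$-invariant subgraph is a forest or all of $\tau$, so the collapse always yields a legitimate marked graph still representing $\alpha$.

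The hard part will be the termination argument: showing that only finitely many moves are needed. The two standard ingredients are (i) a bound on how far $\lambda$ can decrease — it is an algebraic integer of bounded degree realized as the Perron-Frobenius eigenvalue of an integer matrix of bounded size, so the set of possible values below the initial $\lambda$ has no accumulation point from above when the rank is controlled, and a folding strictly decreases $\lambda$ unless it is ``length-preserving'', and (ii) a bound, for fixed $\lambda$, on the number of length-preserving folds before all illegal turns disappear from edge images; here one uses Remark \ref{bounded-back} (bounded cancellation) to control how backtracking can propagate, together with the fact that each such fold reduces the total edge-image length or the illegal-turn count. One must also argue the graph complexity (number of edges) stays bounded so that the move space is finite at each stage. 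Once no move applies, every $f(e)$ is legal, i.e. $f$ is a train track map by Definition \ref{legal-paths}(3); since $\alpha$ was iwip we may finally reapply Remark \ref{make-expanding} to ensure expansiveness, completing the proof. I would present the termination bookkeeping as the technical core and cite \cite{BH} for the detailed verification, since reproducing it in full is exactly the content of their paper.
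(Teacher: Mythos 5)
The paper states this theorem purely as a quotation of \cite{BH} and supplies no proof of its own, so there is no in-paper argument to compare against; your outline is a broadly faithful sketch of the Bestvina--Handel folding algorithm (topological representative, tightening, collapsing invariant forests, subdivision, valence-two homotopy, folding, with the Perron--Frobenius eigenvalue as the complexity) and, like the paper, it ultimately defers the technical termination bookkeeping to \cite{BH}, so it is at the same level of rigor as the paper's treatment. The one point worth tightening is the termination step: in \cite{BH} the invariant that strictly decreases after each round of moves is the Perron--Frobenius eigenvalue $\lambda$ alone, and the process stops because only finitely many values of $\lambda$ in a bounded interval are achievable by irreducible non-negative integer matrices of the size forced by eliminating valence-one and valence-two vertices (at most $3N-3$ edges), rather than by a secondary lexicographic count of illegal turns.
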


\medskip
\subsection{Eigenrays}
\label{rays}

${}^{}$
\smallskip

Throughout this subsection we assume that $f: \tau \to \tau$ is a 
train track map.

\begin{defn-rem}
\label{eigenrays} 
(a)
A {\em ray} $\rho$ is an infinite reduced path in $\tau$ which doesn't necessarily start at a vertex. The path $\rho$ can alternatively be thought of as locally injective map $\R_{\geq 0} \to \tau$ which crosses infinitely often over vertices, or as reduced infinite edge path $e'_1 e_2 e_3 \ldots$, where the $e_i$ with $i \geq 2$ are edges of $\tau$, and $e'_1$ is a non-degenerate terminal segment of some edge $e_1$ of $\tau$ (which includes the possibility $e'_1 = e_1$). The point $P \in \tau$ (not necessarily a vertex~!) which is the inital point of $e'_1$ (from the second viewpoint) or the image of $0 \in \R_{\geq 0}$ (from the first viewpoint) is called the {\em starting point} of $\rho$.  If $P$ is a vertex, then we also call it the {\em initial vertex} of $\rho$.

\smallskip
\noindent
(b)
A ray $\rho$ is called an {\em eigenray} if one has $f^t(\rho) = \rho$ 
for some integer $t \geq 1$. (Note that the condition $f^t(\rho) = \rho$ is stronger than requiring just $[f^t(\rho)] = \rho$~!)  In particular, it follows that the starting point $P$ of $\rho$ is $f$-periodic, and that $\rho$ is a legal path.
\end{defn-rem}

\begin{lem}
\label{eigen-path}
Let $\gamma$ be a non-trivial 
finite path in $\tau$, and assume that for some $t \geq 1$ the map $f^t$ maps $\gamma$ to a path $f^t(\gamma)$ which contains $\gamma$ as initial subpath. Then there is precisely one eigenray $\rho$ in $\tau$ which has $\gamma$ as initial subpath.
\end{lem}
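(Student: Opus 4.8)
The plan is to construct the eigenray $\rho$ as an increasing union (or nested limit) of the iterated images of $\gamma$, and then to verify uniqueness by a straightforward local argument at the starting point. First I would set $t$ to be the given exponent and define $\gamma_0 := \gamma$ and $\gamma_{k+1} := [f^t(\gamma_k)]$. The hypothesis says $f^t(\gamma)$ contains $\gamma$ as an initial subpath; since $\gamma$ is a legal path (being mapped by the train track map $f^t$ to a path containing it — more precisely, one first checks $\gamma$ is legal: if it crossed an illegal turn, then by Remark \ref{several-legal}(a) and Remark \ref{ILT-defn} the reduced image $[f^t(\gamma)]$ would have strictly fewer illegal turns than $f^t(\gamma)$, but $f^t(\gamma)$ already contains $\gamma$ reduced as an initial segment, and iterating this would eventually destroy the initial copy of $\gamma$, a contradiction once one tracks lengths via expansiveness). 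Granting that $\gamma$ is legal, $f^t(\gamma)$ is already reduced, so $\gamma_1 = f^t(\gamma)$ genuinely \emph{contains} $\gamma_0$ as an honest initial subpath, and inductively $\gamma_{k+1} = f^t(\gamma_k)$ contains $\gamma_k$ as an initial subpath.

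Next I would observe that the simplicial lengths $L(\gamma_k)$ are non-decreasing, and in fact tend to infinity: by Lemma \ref{invariant-subgraph}(2) applied to the first edge traversed by $\gamma$ (or rather the edge of which $e_1'$ is a terminal segment), some power of $f^t$ stretches the initial edge to a path of length $\geq 2$, and expansiveness together with the fact that $f$ is a train track map (so legal paths do not cancel under $f$) forces $L(\gamma_k) \to \infty$. Hence the nested initial segments $\gamma_0 \subset \gamma_1 \subset \gamma_2 \subset \cdots$ have a well-defined union, which is a legal infinite reduced edge path $\rho$ starting at the same starting point $P$ as $\gamma$. Because each $\gamma_k$ is an initial subpath of $\gamma_{k+1} = f^t(\gamma_k)$ and $f^t$ is continuous and maps initial subpaths to initial subpaths of the (reduced) image, one gets $f^t(\rho) = \rho$ on the nose (not merely up to reduction, precisely because $\rho$ is legal and $f^t$ does not cancel along legal paths). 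Thus $\rho$ is an eigenray with $\gamma$ as initial subpath.

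For uniqueness, suppose $\rho'$ is another eigenray, say $f^s(\rho') = \rho'$, having $\gamma$ as an initial subpath. Replacing $t$ and $s$ by their least common multiple we may assume $f^t(\rho) = \rho$ and $f^t(\rho') = \rho'$. Now $\rho$ and $\rho'$ agree on the initial segment $\gamma$; I claim they agree everywhere. If not, let $\delta$ be their longest common initial subpath, so $\gamma \subseteq \delta$ and $\delta$ is finite; write $\rho = \delta \circ e \circ \cdots$ and $\rho' = \delta \circ e' \circ \cdots$ with $e \neq e'$ edges at the endpoint $Q$ of $\delta$. Since both $\rho,\rho'$ are legal (being eigenrays) and $f^t$-invariant, $f^t$ maps the initial segment $\delta$ of $\rho$ to an initial segment of $\rho$, namely to a path $f^t(\delta) \supseteq \delta$ that still contains $\delta$, and likewise for $\rho'$; iterating, the edges $e$ and $e'$ get replaced by $Df^{kt}(e)$ and $Df^{kt}(e')$ as the first edges after the (growing) common part. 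But $f^t(\delta)$ is the \emph{same} path whether we think of $\delta$ inside $\rho$ or inside $\rho'$ — it contains $\delta$ plus some definite continuation determined by $f^t$ alone — so after one application the continuations of $\rho$ and $\rho'$ past $f^t(\delta)$ both begin with the \emph{same} edge, namely the first edge of $f^t(\delta)$ beyond $\delta$ (if $f^t(\delta)$ properly extends $\delta$) — contradicting that $e \neq e'$ were the point of divergence — or, if $f^t(\delta) = \delta$, then applying $f^t$ shows $Df^t(e)$ and $Df^t(e')$ are the respective next edges, and since $e,e'$ lie at the periodic point $Q$ and the turn $(e,e')$ is legal (both $\rho,\rho'$ legal), $D^2 f^{kt}$ keeps $(Df^{kt}(e), Df^{kt}(e'))$ non-degenerate for all $k$, yet $L(f^{kt}(\delta)) \to \infty$ by expansiveness, so eventually $f^{kt}(\delta)$ strictly extends $\delta$ and we are back in the first case. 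In all cases we reach a contradiction, so $\rho = \rho'$.

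I expect the main obstacle to be the careful bookkeeping in the uniqueness argument — specifically, ruling out divergence of two eigenrays past their common initial segment while keeping straight the distinction between ``$f^t(\rho) = \rho$'' and ``$[f^t(\rho)] = \rho$'' that the statement of Definition \ref{eigenrays}(b) explicitly flags. The existence half is routine once one establishes that $\gamma$ is legal and that lengths grow, both of which follow cleanly from expansiveness (Lemma \ref{invariant-subgraph}(2)) and the train track property (Remark \ref{ILT-defn}).
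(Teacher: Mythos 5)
Your proposal is correct and follows essentially the same route as the paper: existence via the nested union of the paths $f^{kt}(\gamma)$, and uniqueness by forcing any competing eigenray to agree with this union on arbitrarily long initial segments. The paper's uniqueness step is just the one-line observation that an eigenray $\rho'$ with $f^s(\rho')=\rho'$ containing $\gamma$ as initial subpath must contain every $f^{kst}(\gamma)$ as initial subpath, which replaces your longest-common-prefix analysis; your extra care in checking that $\gamma$ is legal is a reasonable addition the paper leaves implicit.
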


\begin{proof}
It is easy to see that the union of the $f^{kt}(\gamma)$ for all $k \geq 1$ form an eigenray. Conversely, any eigenray $\rho$ which contains $\gamma$ as initial subpath must also contain any $f^{kt}(\gamma)$ as initial subpath.
\end{proof}

We see from Lemma \ref{eigen-path} that an eigenray can never bifurcate in the forward direction to give rise to two eigenrays with same initial point. However, we will see later that, in the 
backwards 
direction, an eigenray may well bifurcate, giving rise to an INP (compare Lemma \ref{two-INPs}).  

\smallskip

Recall (see Remark \ref{gates1}) that  
for each point $P$ of $\tau$ the map $f$ induces a map $f^P_\frak G$ on the gates at $P$.

\begin{prop}
\label{gates-eigenrays}
Let $f: \tau \to \tau$ be a train track map, and let $P$ be a periodic vertex (or interior point) of $\tau$. Then every gate $\frak g_i$ at $P$ is mapped by 
$f^P_\frak G$ 
periodically. A gate $\frak g_i$ contains precisely one 
edge $e_i$ 
(or edge segment $e'_i$)
on which $Df$ acts periodically (an 
``eigen edge''), and 
there is precisely one eigenray $\rho$ which starts ``from $\frak g_i$'', i.e. which starts with an edge (or edge segment) that belongs to $\frak g$. This edge is precisely the eigen edge $e_i$.
\end{prop}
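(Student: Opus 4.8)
The plan is to prove the three assertions in order, using only the definition of gates, the finiteness of $\tau$, Lemma \ref{eigen-path}, and the standing assumption that the train track map $f$ is expanding. \emph{Periodicity of gates.} By Remark \ref{gates1}, $f$ induces for every point $Q$ of $\tau$ an injective map $f^Q_{\frak G}$ from the gates at $Q$ to the gates at $f(Q)$. Let $p\ge 1$ be a period of $P$, so $f^p(P)=P$. Then the composite of the maps $f^{f^i(P)}_{\frak G}$ for $i=0,\dots,p-1$ is an injective self-map $\Psi$ of the finite set of gates at $P$, hence a bijection, hence of finite order $m\ge 1$. Therefore $f^{pm}$ sends every gate at $P$ to itself, and in particular each gate at $P$ is mapped periodically by $f$. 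Set $q:=pm$; then $f^q(P)=P$, and for every gate $\frak g$ at $P$ we have $Df^q(\frak g)\subseteq\frak g$ at the level of edge sets.

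I would then extract the eigen edge. Fix a gate $\frak g$ at $P$. Since $Df^q(\frak g)\subseteq\frak g$ and $\frak g$ is finite, the nested decreasing sequence of subsets $Df^{kq}(\frak g)$, $k\ge 0$, stabilizes to a subset $S$ on which $Df^q$ restricts to a bijection. On the other hand, the defining property of gates (Definition \ref{gates}(a)) together with the finiteness of $\frak g$ yields a single exponent $t\ge 1$, which we may choose to be a large multiple of $q$, such that $Df^t$ is constant on $\frak g$; hence some $Df^{kq}(\frak g)$ is a singleton, so $S=\{e_{\frak g}\}$ for one edge $e_{\frak g}\in\frak g$, and the bijection forces $Df^q(e_{\frak g})=e_{\frak g}$. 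Thus $e_{\frak g}$ is an edge of $\frak g$ on which $Df$ acts periodically. It is the only one: if $e\in\frak g$ satisfies $Df^s(e)=e$ for some $s\ge 1$, pick a common multiple $t'$ of $q$ and $s$ with $Df^{t'}$ constant on $\frak g$; then $e=Df^{t'}(e)=Df^{t'}(e_{\frak g})=e_{\frak g}$.

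For the eigenray, note that $Df^q(e_{\frak g})=e_{\frak g}$ means the legal edge path $f^q(e_{\frak g})$ begins with $e_{\frak g}$. If it had length one it would equal $e_{\frak g}$, and then, since a train track map sends legal paths to legal (hence reduced) paths (Definition \ref{legal-paths}(3)), we would get $L(f^j(e_{\frak g}))=1$ for all $j\ge 0$, contradicting expandingness; hence $f^q(e_{\frak g})$ contains $e_{\frak g}$ as a \emph{proper} initial subpath. Lemma \ref{eigen-path} then produces a unique eigenray $\rho$ having $e_{\frak g}$ as initial subpath, and $\rho$ starts from $\frak g$. Conversely, any eigenray $\rho'$ starting from $\frak g$ has its initial edge $e'$ in $\frak g$, and $f^t(\rho')=\rho'$ forces $f^t(P)=P$ and $Df^t(e')=e'$; by uniqueness of the eigen edge, $e'=e_{\frak g}$, and then $\rho'=\rho$ by the uniqueness clause of Lemma \ref{eigen-path}. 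This establishes all three statements; the case of an interior periodic point $P$ is identical, using the two gates at $P$ provided by Definition \ref{gates}(b).

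The step I expect to be the main obstacle is the last one: one must rule out that a gate collapses so rapidly that its eigen edge is merely fixed by $f^q$ rather than strictly extended, since otherwise Lemma \ref{eigen-path} would not yield a genuine infinite ray. This is exactly where expandingness of $f$, together with the fact that a train track map introduces no cancellation along a legal path, is essential; the rest is elementary finiteness combinatorics. One should also keep in mind the routine point that, for a train track map, ``$Df^q$'' as used above is simultaneously the $q$-fold iterate of $Df$ and the first-edge map of $f^q$.
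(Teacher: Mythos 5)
Your proof is correct and follows essentially the same route as the paper's: periodicity of the induced gate map from Remark \ref{gates1}, eventual constancy of $Df^t$ on each gate to isolate a unique $Df$-periodic eigen edge, and Lemma \ref{eigen-path} for existence and uniqueness of the eigenray. The extra care you take (the finite-order argument for the gate bijection, and ruling out $L(f^q(e_{\frak g}))=1$ via expandingness before invoking Lemma \ref{eigen-path}) only makes explicit points the paper leaves implicit.
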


\begin{proof}
By Definition \ref{gates} for every gate $\frak g$ there is an exponent $t \geq 0$ such that $D f^t$ maps every edge of $\frak g$ to a single edge. Recall from Remark \ref{gates1} that  
for each periodic point $P$ of $\tau$ 
the map $f^P_\frak G$ induced by $f$ on the gates at $P$ is bijective.
Hence each gate $\frak g_i$ at $P$ contains precisely one edge $e_i$ 
(the ``eigen edge'') 
which is periodic under the induced map $Df$, say with period $k(i) \in \N$. 

Thus $f^{k(i)}(e_i)$ is an edge path with initial subpath $e_i$. Hence Lemma \ref{eigen-path} gives us a well defined eigenray which starts from the gate $\frak g_i$, with initial edge $e_i$.

Since the initial edge (or edge segment) of an eigenray is necessarily periodic under the map $Df$, and two eigenrays with same initial edge (or edge segment) must agree (see Lemma \ref{eigen-path}), it follows that from every gate only one eigenray can start. 
\end{proof}

From Proposition \ref{gates-eigenrays} we obtain directly the following:

\begin{cor}
\label{eigen-bijection}
For every periodic point $P$ of $\tau$ there is a canonical bijection between (i) the gates at $P$, (ii) the eigen edges with initial vertex $P$, and (iii) the eigenrays with starting point $P$.
\qed
\end{cor}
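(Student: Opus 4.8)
The plan is to read all three bijections directly off Proposition \ref{gates-eigenrays}, which already does the substantive work; the Corollary is then just a matter of assembling the pieces and checking surjectivity. Throughout, when $P$ is an interior point of an edge rather than a vertex, ``edges with initial vertex $P$'' is to be read as the two edge germs at $P$ in the sense of Definition \ref{gates}(b), and ``eigen edge'' as a $Df$-periodic such germ; with that convention everything below goes through verbatim.

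First I would set up the map $\Phi_1$ from the set of gates at $P$ to the set of eigen edges with initial vertex $P$, sending a gate $\frak g_i$ to the unique $Df$-periodic germ $e_i$ it contains. By Proposition \ref{gates-eigenrays} such an $e_i$ exists and is the only periodic germ inside $\frak g_i$, so $\Phi_1$ is well defined. It is injective because distinct gates are disjoint as sets of edge germs at $P$ (Definition \ref{gates}), hence carry distinct eigen edges. It is surjective because the gates partition all edge germs at $P$: an eigen edge $e$ lies in some gate $\frak g_j$, and since $\frak g_j$ contains only one periodic germ, necessarily $e = e_j = \Phi_1(\frak g_j)$.

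Next I would set up the map $\Phi_2$ from eigen edges at $P$ to eigenrays with starting point $P$, sending an eigen edge $e_i$ (with $Df$-period $k(i)$) to the unique eigenray having $e_i$ as its initial germ. This eigenray exists by Lemma \ref{eigen-path} applied with $\gamma = e_i$, using that $f^{k(i)}(e_i)$ has $e_i$ as initial subpath because $Df^{k(i)}(e_i) = e_i$. The map $\Phi_2$ is injective by the uniqueness clause of Lemma \ref{eigen-path} (two eigenrays with the same initial germ coincide), and surjective because the initial germ of any eigenray is necessarily $Df$-periodic, as recorded in Definition \ref{eigenrays}(b) and reused in the proof of Proposition \ref{gates-eigenrays}, hence is an eigen edge. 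Composing, $\Phi_2 \circ \Phi_1$ is exactly the assignment ``$\frak g_i \mapsto$ the eigenray starting from $\frak g_i$'' of Proposition \ref{gates-eigenrays}, so the three sets are in canonical bijection.

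I do not expect a genuine obstacle here, since this is a bookkeeping corollary; the only point demanding a moment's care is the interior-point case discussed above, and perhaps a remark justifying the word ``canonical'': each of $\Phi_1$, $\Phi_2$ intertwines the periodic permutation $f^P_{\frak G}$ of the gates at $P$ with the induced permutations of eigen edges and of eigenrays at $P$, so the correspondence is forced and does not depend on any choices.
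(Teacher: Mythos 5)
Your proposal is correct and follows exactly the route the paper intends: the paper gives no separate proof (the corollary is stated with a \qed as a direct consequence of Proposition \ref{gates-eigenrays}), and your maps $\Phi_1$ and $\Phi_2$ simply make explicit the two correspondences already established there and in Lemma \ref{eigen-path}. The only additions beyond the paper's implicit argument are the careful treatment of the interior-point case via edge germs and the equivariance remark justifying ``canonical,'' both of which are sound.
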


We will also need later the following property:

\begin{lem}
\label{non-periodic}
Let $f: \tau \to \tau$ be a train track map which represents an automorphism of $\FN$, and let $\rho$ be an eigenray in $\tau$. Then $\rho$ can not be an eventually periodic path, i.e. a path of the form $\gamma_0 \circ \gamma_1 \circ \gamma_1 \circ \gamma_1 \circ \ldots$
(where $\gamma_0$ can possibly be trivial).
\end{lem}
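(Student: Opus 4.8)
The plan is to argue by contradiction. Suppose $\rho=\gamma_0\circ\gamma_1\circ\gamma_1\circ\cdots$ with $\gamma_1$ a non-degenerate closed path. First I would normalize: replacing $\gamma_1$ by its primitive root as a cyclic word (and absorbing the discarded initial part into $\gamma_0$) one may assume $\gamma_1=:\delta$ is \emph{primitive}, i.e.\ not equal as a cyclic word to $\eta^m$ with $m\ge 2$; thus $\rho=\gamma_0\circ\delta^\infty$. Since an eigenray is legal (Definition~\ref{eigenrays}(b)), the sub-ray $\delta^\infty$ is legal, so $\delta$ is a \emph{legal loop}: the turn at which $\delta$ closes up is crossed by $\rho$, hence is legal. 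Write $\alpha\in\Aut(\FN)$ for the automorphism represented by $f$, and $[w]$ for the primitive conjugacy class of $\pi_1\tau\cong\FN$ carried by $\delta$.

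The core of the argument is to show that legality of $\delta$ forces the simplicial length of its forward iterates to be \emph{constant} along the relevant subsequence. Fix $t\ge1$ with $f^t(\rho)=\rho$. Since $f^t$ commutes with concatenation, $\rho=f^t(\rho)=f^t(\gamma_0)\circ\bigl(f^t(\delta)\bigr)^\infty$, and because train track maps preserve legality (Definition~\ref{legal-paths}(3)) this ray is legal, hence reduced; in particular $f^t(\delta)$ is reduced and primitive as a cyclic word (it carries $\alpha^t([w])$, which is primitive since $\alpha^t$ is an automorphism). Consequently the periodic tail $\bigl(f^t(\delta)\bigr)^\infty$ has least period $L(f^t(\delta))$, while the presentation $\rho=\gamma_0\circ\delta^\infty$ shows the least eventual period of $\rho$ is $L(\delta)$. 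Since the least eventual period of an ultimately periodic reduced ray is an invariant of the ray, $L(f^t(\delta))=L(\delta)$, and running the same argument with $kt$ in place of $t$ (using $f^{kt}(\rho)=\rho$) gives $L(f^{kt}(\delta))=L(\delta)$ for all $k\ge0$. Now pick any edge $e$ occurring in $\delta$: since each $f^{kt}(\delta)$ is legal hence reduced and contains $f^{kt}(e)$ as a subpath, $L(f^{kt}(e))\le L(\delta)$, and writing an arbitrary $n$ as $kt+r$ with $0\le r<t$ yields a uniform bound $L(f^n(e))\le B$ for all $n\ge0$. This contradicts expansiveness: since $f$ sends every edge to a non-degenerate legal path, $n\mapsto L(f^n(e'))$ is non-decreasing and eventually $\ge2$ for every edge $e'$, so for $t_0$ large $L(f^{t_0}(e'))\ge2$ for all $e'$; as $f^{kt_0}(e)$ is legal, applying $f^{t_0}$ to it introduces no cancellation and at least doubles its length, whence $L(f^{kt_0}(e))\ge2^k\to\infty$.

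The step I expect to be the main obstacle is making precise that the ``least eventual period'' of an ultimately periodic reduced ray is a well-defined invariant, read off correctly from \emph{any} presentation $\sigma\circ\theta^\infty$ with $\theta$ a primitive cyclic word — and, hand in hand with this, that $\alpha^t$ maps the primitive conjugacy class $[w]$ to a primitive one. Everything else (legality is inherited by subpaths and preserved by $f$, by Remark~\ref{several-legal} and Definition~\ref{legal-paths}; a train track map introduces no cancellation on legal paths, so lengths of iterates of legal loops grow according to the transition matrix; an expanding map eventually at least doubles edge lengths, by Definition~\ref{expanding-and-D}) is routine. As an alternative to this last, Perron--Frobenius-flavored step one could instead invoke Lemma~\ref{invariant-subgraph}: the bounded orbit $\{f^n(e)\}$ spans a finite $f$-invariant subgraph, which must be all of $\tau$, and iterating the conclusion that some $f^s(e)$ crosses $e^{\pm1}$ at least three times gives $L(f^{ks}(e))\ge3^k$, again a contradiction.
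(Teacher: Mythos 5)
Your proof is correct and follows essentially the same route as the paper's (one-line) argument: eventual periodicity of the eigenray forces the period loop to be carried to itself by the dynamics, which contradicts expansiveness. Your ``least eventual period'' computation is precisely the justification for the paper's unexplained assertion that ``the loop $\gamma_1$ would have to be mapped to itself,'' so you have simply filled in the details the paper elides.
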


\begin{proof}
By our convention from Definition \ref{legal-paths} (3) the map $f$ is expanding, which contradicts the fact that the loop $\gamma_1$ would have to be mapped to itself, given that by hypothesis $f$ represents an automorphisms of $\FN$.
\end{proof}

\medskip
\subsection{INPs}
\label{INPs}

${}^{}$
\smallskip

As before, assume that throughout this subsection $f: \tau \to \tau$ is a train track map. 
In addition, we assume in this section that $f$ induces an automorphism on $\pi_1(\tau)$.

\begin{defn}
\label{defn-INPs}
A path $\eta$ in $\tau$ which crosses over precisely one illegal turn is called a {\em periodic indivisible Nielsen path} (or {\em INP}, for short), if for some exponent $t \geq 1$ one has $[f^t(\eta)] = \eta$, (where, as before, $[\gamma]$ denotes the path obtained via reduction from a possibly unreduced path $\gamma$).

The illegal turn on $\eta = \gamma' \circ \bar \gamma$ is called the {\em tip} of $\eta$, while the two maximal initial legal subpaths $\gamma'$ and $\gamma$, of $\eta$ and $\bar \eta$ respectively, are 
called the {\em branches} of $\eta$. 
\end{defn}

If below we write an INP $\eta = \gamma' \circ \bar \gamma$, then unless otherwise specified, $\gamma'$ and $\gamma^{-1}$ denote the two legal branches of $\eta$.

\begin{rem}
\label{train-track-maps}
Let $\eta = \gamma' \circ \bar \gamma$ be an INP of the train track map $f: \tau \to \tau$.
\begin{enumerate}
\item
\label{two}
The two 
endpoints of $\eta$ are fixed points (but not necessarily vertices~!) of $f^t$ for some $t \geq 1$. The branches $\gamma$ and $\gamma'$ of $\eta$ are initial segments of eigenrays $\rho$ and $\rho'$ respectively, defined as unions of the nested sequences of paths $(f^{kt}(\gamma))_{k \in \N}$ and $(f^{kt}(\gamma'))_{k \in \N}$ (compare Lemma \ref{eigen-path}). The rays $\rho$ and $\rho'$ coincide up to the initial segments $\gamma$ and $\gamma'$ respectively: indeed, the common terminal segment of $\rho$ and $\rho'$ is precisely the union of all the paths crossed back and forth by the backtracking subpaths (see Convention-Definition \ref{paths} (b)) of the unreduced paths $f^{kt}(\eta)$, at the tip of the illegal turn of $\eta$, for all $k \geq 1$.
\item
\label{three}
Two eigenrays $\rho$ and $\rho'$, which coincide up to initial segments $\gamma$ and $\gamma'$ respectively, define always an INP $\eta = \gamma' \circ \bar \gamma$ as above in (\ref{two}). This follows directly from the definitions.
\end{enumerate}
\end{rem}

\begin{rem}
\label{finitely-many-INPs}
For every train track map $f: \tau \to \tau$ there are only finitely many INPs in $\tau$, if one assumes (as done throughout this section) that (i) $f$ is expanding, (ii) $\tau$ is finite, and (iii) $f$ 
induces an automorphism on $\pi_1(\tau)$.

This is a consequence of the fact that $f$ induces a quasi-isometry on the universal covering space (with respect to the simplicial metric $L$), so that for any reduced path in $\tau$ the length of any backtracking subpath in the image path 
is bounded above by a constant depending only on $f$. Since $f$ is expanding, this gives a bound to the maximal length of the legal branches $\gamma$ and $\gamma'$ of any  INP $\eta = \gamma' \circ \bar \gamma$ in $\tau$, since the only backtracking subpath in $f(\eta)$ is the subpath cancelled at the tip when $f(\eta)$ is reduced to $[f(\eta)]$. 

(If the reader wants to fill in the details, we recommend restricting to the case where $f$ induces an iwip automorphism on $\pi_1(\tau)$ and working with the PF-metric $L^{PF}$ introduced in subsection \ref{limit-tree} rather than with the simplicial metric $L$; by Remark \ref{quasi-isometry} the two metrics define a quasi-isometry for $\tilde \tau$.)
\end{rem}

\begin{convention}
\label{subdivision-INP}
As pointed out in Remark \ref{train-track-maps} (\ref{two}) the endpoints of an INP may a priori well lie in the interior of an edge. However, since these points are $f$-periodic, and since by Remark \ref{finitely-many-INPs} there are only finitely many INPs in $\tau$, we can assume from now on that the edges of $\tau$ have been subdivided accordingly, so that all endpoints of INPs are vertices.

(Of course, such a subdivision must be followed potentially by the procedure lined out in Remark \ref{make-expanding}, in order to make sure that after subdivision the train track map $f$ is still expanding.)
\end{convention}

\begin{lem}
\label{two-INPs}
There exists an exponent $r_1 \geq 1$ with the following property:
Let $\gamma$ be a path in $\tau$, and assume that it contains precisely two illegal turns, each being the tip of an INP-subpath $\eta_1$ and $\eta_2$ of $\gamma$.  Assume that $\eta_1$ and $\eta_2$ overlap in a non-degenerate  subpath.  Then $f^{r_1}(\gamma)$ reduces to a path $[f^{r_1}(\gamma)]$ which is legal.
\end{lem}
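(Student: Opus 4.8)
The plan is to analyze how the reduced image $[f^k(\gamma)]$ evolves, using the fact that the only source of cancellation under $f$ is at the tips of illegal turns. Write $\gamma = \alpha \circ \eta_1' \circ \beta \circ \eta_2' \circ \delta$ where $\eta_1'$ and $\eta_2'$ are the portions of $\gamma$ carrying the two illegal turns that form (parts of) the INP-subpaths $\eta_1$ and $\eta_2$, and $\beta$ is the overlap region; by hypothesis $\beta$ is non-degenerate and lies inside both $\eta_1$ and $\eta_2$. Since each $\eta_i$ is an INP, its two legal branches are initial segments of eigenrays (Remark \ref{train-track-maps}(\ref{two})), and since $\eta_1$ and $\eta_2$ share the non-degenerate subpath $\beta$, they must in fact share an entire eigenray as common terminal/initial segment. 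The first key step is therefore to pin down the combinatorial shape forced by the overlap: $\eta_1$ and $\eta_2$ are two INPs glued along a common branch (one is $\gamma_1' \circ \bar\sigma$ and the other is $\sigma \circ \bar\gamma_2'$, where $\sigma$ is the common eigenray-segment), so that the concatenation $\eta_1 \circ$ (rest) $\circ\, \eta_2$ inside $\gamma$ looks, near the two tips, like $\gamma_1' \circ \bar\sigma \circ \sigma \circ \bar\gamma_2' = \gamma_1' \circ \bar\gamma_2'$ after the central backtracking $\bar\sigma\sigma$ is cancelled — which turns the whole middle block into a single legal path with at most the illegal turn $(\gamma_1', \bar\gamma_2')$ remaining, and that turn is either legal or else is itself the tip of a new INP.

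The second step is quantitative control of the cancellation rate. By Remark \ref{ILT-defn}, $\ILT$ never increases under $f$, and $\gamma$ has exactly two illegal turns; so at each stage $[f^k(\gamma)]$ has at most two illegal turns. The two tips persist only as long as the competing legal branches on either side have not been fully eaten. Because each $\eta_i$ reduces to itself up to the finite cancellation at its tip, and because $f$ is expanding (Definition \ref{legal-paths}(3)), the legal branches of the INPs grow in length under iteration at a definite exponential rate (the Perron–Frobenius rate), while the finite central segment $\bar\sigma\sigma$ and the finite backtracking at each tip are consumed at a bounded rate governed by the bounded-cancellation constant of $f$ (Remark \ref{bounded-back}, Remark \ref{finitely-many-INPs}). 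Hence after a uniformly bounded number of iterates the backtracking region between the two tips is entirely cancelled and the two tips "collide": the surviving reduced path has the two illegal turns merged, leaving a path with at most one illegal turn that is the tip of an INP. The third step is then to iterate once more: a single remaining INP-tip, together with the now-legal surrounding branches, is killed after finitely many further applications of $f$, since an INP reduces to itself but any INP \emph{strictly inside} a longer legal context gets its tip cancelled (the two branches are initial segments of a common eigenray, so $[f^k(\eta)]$ is eventually absorbed into a legal subpath of the ambient path). Collecting these bounds, and using that there are only finitely many INPs in $\tau$ (Remark \ref{finitely-many-INPs}) so all the constants can be taken uniform, yields a single exponent $r_1$ that works for every such $\gamma$.

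The main obstacle I expect is the bookkeeping of step two: making rigorous the claim that the two illegal tips must "collide" after a bounded number of steps, rather than, say, one tip being cancelled while the other survives indefinitely or the cancellation stabilizing without ever reaching legality. This requires comparing two competing linear-in-$k$ (for the finite backtracking regions, which stay bounded) versus exponential-in-$k$ (for the legal branches) phenomena and checking they interact as claimed — in particular ruling out that reduction of $f^k(\gamma)$ creates \emph{new} backtracking that re-feeds an illegal turn. The clean way to handle this is to work with the eigenray description: replace $\gamma$ by the biinfinite/long picture in which $\eta_1$ and $\eta_2$ are literally determined by four eigenrays with the middle two being a matched pair $\sigma, \bar\sigma$, apply $f^k$ on the nose (where eigenrays are fixed, $f^k(\rho) = \rho$), and observe that the unreduced $f^k(\gamma)$ has its total backtracking length bounded by a constant independent of $k$ (bounded cancellation applied to the original $\gamma$, whose length is itself bounded once we reduce to the finitely many relevant INP-configurations), while the legal separation between illegal turns grows without bound; once that separation exceeds the bounded-cancellation constant, $[f^{k}(\gamma)]$ can have at most one illegal turn, and one final bounded number of iterates removes it. \boxx
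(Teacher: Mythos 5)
Your proposal goes wrong at the very first step, and the error propagates. The overlap of $\eta_1$ and $\eta_2$ is their overlap \emph{as subpaths of $\gamma$} (Convention-Definition \ref{paths}~(d)): it is a single legal segment $\gamma'$ traversed in the \emph{same} direction by both INP-subpaths, namely a terminal piece of the branch of $\eta_1$ that emanates from its tip, which coincides with an initial piece of the branch of $\eta_2$ that leads into its tip. Since $\gamma$ is reduced (its only illegal turns are the two non-degenerate INP-tips), there is no backtracking pair $\bar\sigma\circ\sigma$ in the middle, and the two INPs do not share an eigenray; your reduction of the middle block to ``a single legal path with at most one illegal turn'' therefore has no basis. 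The quantitative mechanism in your second step is also inverted. The cancellation at the tip of an INP under $f^k$ is \emph{not} bounded: $[f^k(\eta_i)]=\eta_i$ while $L(f^k(\gamma_i))\sim\lambda^k L(\gamma_i)$, so the amount eaten at each tip grows exponentially in $k$ (the bounded-cancellation constant of Remark \ref{bounded-back} controls a single application of $f$, not $f^k$). The correct argument is exactly the opposite of yours: one shows that $L(f^r(\gamma'))$, the image of the overlap, eventually exceeds $L(\gamma_1)+L(\gamma_2')$, at which point the two exponentially large cancellation zones at the tips must collide inside the middle segment and annihilate \emph{both} illegal turns at once. Your picture --- bounded backtracking versus exponentially growing legal separation --- would, if anything, prove that the two illegal turns persist and drift apart, which is precisely what happens in the case the lemma is designed to exclude, namely when the two INP-subpaths are disjoint (the exceptional configuration of Proposition \ref{ILT-decrease}).

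Your third step is false as well: an INP-subpath sitting strictly inside a longer path does \emph{not} get its tip cancelled under iteration --- self-reproduction under $f$ is the defining property of an INP, and the survival of such a tip is exactly the exceptional alternative in Proposition \ref{quote}. So the claim that ``one final bounded number of iterates removes it'' cannot be repaired. To fix the proof, discard the $\bar\sigma\sigma$ picture, identify $\gamma'$ as a non-degenerate legal edge path lying between the two tips inside both branches, use expansiveness to make $L(f^r(\gamma'))>L(\gamma_1)+L(\gamma'_2)$, and observe that this inequality forces the two tip-cancellations to overlap, leaving a legal path; uniformity of $r_1$ then follows from the finiteness of the set of INPs (Remark \ref{finitely-many-INPs}).
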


\begin{proof}
By hypothesis, $\eta_{1} = \gamma'_1 \circ \bar \gamma_1$ and $\eta_{2} = \gamma'_2 \circ \bar \gamma_2$ intersect in a non-degenerate path, which by Convention \ref{subdivision-INP} must be an edge path $\gamma'$ with vertices as initial and terminal point. 

Each INP contains only one illegal turn, 
and by assumption $\gamma$ contains two
such. Since $\gamma'$ is a subpath of both, $\eta_1$ and $\eta_2$, it follows that 
the path $\gamma'$ cannot cross over either of the two illegal turns on $\gamma$.
Hence $\gamma'$
must be a legal subpath of both, the branch $\gamma'_2$ of $\eta_2$, and the inverse of the branch $\gamma_1$ of $\eta_1$.

Since by assumption $\gamma'$ is non-degenerate, it is expanded under iteration of $f$ to a legal edge path of arbitrary big length.

Assume that an exponent $r \geq 1$ is chosen such that $L(f^r(\gamma'))$ is strictly bigger than 
the sum $L(\gamma_1) + L(\gamma'_2)$.
In this case 
we observe that on $f(\gamma')$ there must be a non-degenerate subpath which belongs to both, the backtracking subpath on $f^r(\gamma_1^{-1})$ at the tip of $f^r(\eta_1)$, and the backtracking subpath on $f^r(\gamma'_2)$ at the tip of $f^r(\eta_2)$.
It follows then directly 
that, in reducing $f^r(\gamma)$, both illegal turns disappear into the backtracking subpaths, so that the resulting path $[f^r(\gamma)]$ is legal.

Since $f$ is expanding and since (see Remark \ref{finitely-many-INPs}) there are only finitely many INPs in $\tau$, it is easy to find a bound $r_1$ as required in the claim.
\end{proof}

The following is one of the crucial properties of train track maps of graphs. It goes back to the first paper \cite{BH} on the subject. An alternative proof is given in \cite{Lu1}.

\begin{prop}
\label{quote}
For every train track map $f: \tau \to \tau$
there exists a constant $r_2 = r_2(\tau) \geq 0$ such that every path $\gamma$ with precisely 1 illegal turn satisfies:

\smallskip
{Either $\gamma$ contains an INP as subpath, or else $[f^{r_2}(\gamma)]$ is legal.}
\qed
\end{prop}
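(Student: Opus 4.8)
The plan is to analyze a path $\gamma$ with exactly one illegal turn by tracking what happens to its two legal branches under iteration of $f$, using the fact that illegal turns are preserved in number but eventually ``absorbed'' into backtracking if enough cancellation occurs. Write $\gamma = \gamma' \circ \bar\gamma$, where $\gamma'$ and $\gamma$ are the maximal legal subpaths meeting at the illegal turn (the tip). Since $f$ is a train track map, $f^t(\gamma')$ and $f^t(\gamma)$ remain legal for all $t$, and the only backtracking in $f^t(\gamma)$ occurs at the tip of the (unique) illegal turn. So $[f^t(\gamma)]$ is legal if and only if the cancellation at the tip consumes all of one of the two branches; equivalently, $[f^t(\gamma)]$ fails to be legal precisely when, for every $t$, the reduction at the tip leaves nondegenerate initial segments of both $f^t(\gamma')$ and $f^t(\gamma)$ (in reduced form) meeting at an illegal turn.

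First I would set up the ``cancellation process'' at the tip. Consider the turn $(e_1', e_2')$ where $e_1'$ is the terminal edge-segment of $\gamma$ and $e_2'$ is the initial edge-segment of $\gamma'$; illegality means $Df^t(e_1') = Df^t(e_2')$ for some (hence all large) $t$. Iterating $f$ and reducing, a common prefix of $f(\gamma')$ and $f(\bar\gamma)$ gets cancelled; I would define $\gamma_1^{(1)}, \gamma_2^{(1)}$ as the two reduced residual branches of $[f(\gamma)]$, and inductively $\gamma_1^{(k)}, \gamma_2^{(k)}$ for $[f^k(\gamma)]$. The key dichotomy: either at some stage one residual branch becomes degenerate — in which case $[f^k(\gamma)]$ is legal and we are done — or the process continues forever with both residual branches nondegenerate. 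I would show that in the latter case, the ``tip region'' stabilizes: the pair consisting of the illegal turn together with short initial segments of the two branches can only take finitely many values (there are finitely many turns and finitely many edges), so some iterate of $f$ maps this tip-data to a periodic configuration. Passing to that periodic iterate, the residual path in a neighborhood of the tip is fixed up to reduction, and by the standard argument (as in Remark \ref{train-track-maps}) this configuration determines an honest INP subpath $\eta \subseteq \gamma$: the two branches of this $\eta$ are the maximal initial segments of $\gamma'$ and $\gamma$ along which the two relevant eigenrays (obtained from Corollary \ref{eigen-bijection} and Lemma \ref{eigen-path}) coincide after the tip.

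The technical heart is the uniformity of the constant $r_2$, i.e. that it depends only on $\tau$ (equivalently on $f$) and not on $\gamma$. Here I would use Remark \ref{bounded-back} / Remark \ref{finitely-many-INPs}: the amount of backtracking created by applying $f$ once to a reduced path is bounded by a constant $D = D(f)$. Combined with the train track property $\ILT(f^k(\gamma)) = \ILT(\gamma) = 1$ (Remark \ref{ILT-defn}), this means that at each step the cancellation at the tip removes at most $D$ edges from each residual branch. So if after some bounded number of steps neither branch has been exhausted, the residual branches must already be ``long enough'' that the tip configuration has entered its periodic regime — and at that point the periodic tip data forces an INP inside $\gamma$, by the finiteness of INPs and the eigenray description above. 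Turning this into an explicit bound $r_2(\tau)$ requires combining: (a) the finite list of possible tip configurations, (b) the bound $D$ on per-step cancellation, and (c) the finitely many INPs with their bounded branch lengths from Remark \ref{finitely-many-INPs}.

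The main obstacle I anticipate is step two's claim that a forever-continuing cancellation process must eventually be periodic \emph{and} must actually produce an INP as a subpath of the \emph{original} $\gamma$ (not merely of some iterate $f^k(\gamma)$). Getting back from a periodic tip configuration in $f^k(\gamma)$ to an honest subpath of $\gamma$ requires ``pulling back'' the eigenrays through $f^k$: one uses that $f$ restricted to the relevant periodic point is injective on gates (Remark \ref{gates1}) and that an eigenray is determined by a finite initial segment (Lemma \ref{eigen-path}), so the INP visible in $f^k(\gamma)$ has a well-defined preimage INP, and legality of the branches of $\gamma$ outside this preimage INP guarantees it genuinely sits inside $\gamma$. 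Making the bookkeeping of ``which initial segment of $\gamma$'' precise — and uniform in $\gamma$ — is the delicate part; everything else is essentially the structure theory of INPs and eigenrays already assembled in this section.
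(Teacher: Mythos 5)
The paper offers no proof of Proposition \ref{quote} at all: it is quoted from \cite{BH} (with an alternative proof in \cite{Lu1}) and stated with an immediate end-of-proof mark. So your sketch has to stand on its own, and while its skeleton (split $\gamma$ at the tip, run the cancellation process on the two residual branches, argue that a non-terminating process forces an INP) is the standard one, two of its steps have genuine gaps. The first concerns the uniformity of $r_2$. You argue that each application of $f$ cancels at most $D=D(f)$ edges from each residual branch, so that a branch surviving a bounded number of steps must be ``long enough'' to have driven the tip configuration into a periodic regime. This gets the dynamics backwards: a residual branch is first \emph{expanded} (by the factor $\lambda>1$ in the Perron--Frobenius metric of subsection \ref{limit-tree}) and only then trimmed by at most $D$, so the relevant dichotomy is governed by the critical constant $D/(\lambda-1)$ --- a branch longer than this never dies, while a branch below it can hover there forever (the branches of an INP do exactly that; they sit precisely at this threshold). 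Consequently ``survived many steps'' does not imply ``long'', and ``long'' has nothing to do with periodicity of the tip data. The uniform bound has to come instead from the facts that (i) the tip data (the illegal turn together with the initial segments of the residual branches of PF-length about $D/(\lambda-1)$) evolves autonomously and ranges over a finite set, so within a bounded number of steps the process either terminates or its tip data cycles, and (ii) a branch strictly shorter than the corresponding INP branch falls strictly below the critical length and is consumed in uniformly bounded time. Neither ingredient appears in your sketch.

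The second and more serious gap is the passage from a periodic tip configuration to an INP \emph{inside the original $\gamma$}. Periodicity of the tip data only produces two legal rays $\rho,\rho'$ based at a periodic point satisfying a shifted invariance $f^{q}(\rho_i)=d\circ\rho_i$ for a common shift path $d$; to obtain an INP one must show that this configuration closes up into two \emph{finite} branches with $f$-periodic endpoints as in Remark \ref{train-track-maps}, which is exactly what needs proving, not a consequence of finiteness of turns. Moreover, whatever INP this produces sits in $[f^{k}(\gamma)]$, and your proposed repair --- pulling it back through $f^{k}$ using injectivity on gates --- does not place it in $\gamma$: the $f^{k}$-preimage of a subpath of $[f^{k}(\gamma)]$ need not be a subpath of $\gamma$, and the eigenray/gate machinery of Lemma \ref{eigen-path} and Remark \ref{gates1} only controls forward images. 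The honest route is to run the entire argument on $\gamma$ itself and show that if neither residual branch dies within the bounded time from (i)--(ii) above, then the illegal turn of $\gamma$ is the tip of an INP whose two branches are already terminal segments of the two branches of $\gamma$. Your final paragraph correctly identifies this as the delicate point, but identifying it is not the same as resolving it; as written the proposal is an outline of the right strategy rather than a proof.
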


Recall from Remark \ref{ILT-defn} that $\ILT(\gamma)$ denotes the number of illegal turns in a path $\gamma$.

\begin{prop}
\label{ILT-decrease}
There exists an exponent 
$r = r(f) \geq 0$ such that every finite path $\gamma$ in $\tau$ 
with $\ILT(\gamma) \geq 1$ 
satisfies
$$\ILT([f^r(\gamma)]) < \ILT(\gamma) \, ,$$
unless every illegal turn on $\gamma$ is the tip of an INP-subpath $\eta_i$ of $\gamma$, where any two $\eta_i$ are either disjoint subpaths on $\gamma$, or they overlap precisely in a common endpoint (for this terminology compare Convention-Definition \ref{paths} (d)).
\end{prop}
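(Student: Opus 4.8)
The plan is to iterate the previously established results on paths with one illegal turn (Proposition \ref{quote}) and on overlapping pairs of INPs (Lemma \ref{two-INPs}), and to use the monotonicity $\ILT([f(\gamma)]) \le \ILT(\gamma)$ from Remark \ref{ILT-defn} to organize the counting. First I would fix the constants: let $r_1$ be the exponent from Lemma \ref{two-INPs}, let $r_2 = r_2(\tau)$ be the constant from Proposition \ref{quote}, and let $D$ be the bounded-cancellation constant for $f$ from Remark \ref{bounded-back}. The idea is that a single application of $f$ can only destroy an illegal turn by ``cancelling it into'' the reduction at the tip of some other illegal turn, and this can only happen to boundedly much path; meanwhile a non-degenerate legal segment between two illegal turns grows without bound under iteration. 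So the candidate exponent $r$ will be chosen large enough (depending on $f$, via $r_1$, $r_2$, $D$, and the finitely many INPs of $\tau$, cf.\ Remark \ref{finitely-many-INPs}) that any legal segment between consecutive illegal turns of $\gamma$, if it is non-degenerate and not ``locked'' as the overlap edge of two INPs, has $f^r$-image long enough to force at least one cancellation.

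The key steps, in order, are as follows. (i) Reduce to the case where every illegal turn of $\gamma$ is the tip of an INP-subpath: if some illegal turn $\mathfrak t$ of $\gamma$ is \emph{not} the tip of an INP, look at the maximal subpath $\gamma_0 \subseteq \gamma$ containing $\mathfrak t$ and exactly one illegal turn; by Proposition \ref{quote}, $[f^{r_2}(\gamma_0)]$ is legal, so the turn coming from $\mathfrak t$ disappears in $[f^{r_2}(\gamma)]$, while no new illegal turns are created (Remark \ref{ILT-defn}); hence $\ILT([f^{r_2}(\gamma)]) < \ILT(\gamma)$ and we are done with $r \ge r_2$. (ii) So assume every illegal turn of $\gamma$ is the tip of an INP-subpath $\eta_i$. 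If two such $\eta_i, \eta_j$ overlap in a non-degenerate subpath, apply Lemma \ref{two-INPs} to the subpath $\eta_i \cup \eta_j$ (or rather to the minimal subpath of $\gamma$ containing both illegal turns): after $r_1$ iterations that subpath reduces to a legal path, so \emph{both} of those illegal turns vanish in $[f^{r_1}(\gamma)]$, again with no new illegal turns appearing, giving $\ILT([f^{r_1}(\gamma)]) \le \ILT(\gamma) - 2 < \ILT(\gamma)$. (iii) The only remaining configuration is: every illegal turn is the tip of an INP-subpath $\eta_i$, and the $\eta_i$ are pairwise either disjoint on $\gamma$ or meet in a single common endpoint --- which is precisely the stated exception. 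Collecting the three cases, we may take $r = \max(r_1, r_2)$ (or a common multiple if periods of INPs must be accommodated), and the proposition follows.

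The step that needs the most care is (ii) --- specifically, checking that when two INP-subpaths overlap non-degenerately, the minimal subpath of $\gamma$ containing both their illegal turns really is of the form covered by Lemma \ref{two-INPs}, i.e.\ that between the two illegal turns there is a genuine non-degenerate legal segment with vertex endpoints. This uses Convention \ref{subdivision-INP} (endpoints of INPs are vertices) and the observation, made in the proof of Lemma \ref{two-INPs}, that the overlap of two INPs cannot contain an illegal turn and is therefore a legal edge-path; the non-degeneracy hypothesis on the overlap is exactly what drives the length-expansion argument. A secondary subtlety is bookkeeping with the reduction operator: one must make sure that in passing from $f^r(\gamma)$ to $[f^r(\gamma)]$ the cancellations happening at the tips of the various $\eta_i$ do not interact across widely separated illegal turns --- this is where the bounded-cancellation constant $D$ is used to localize each cancellation to a neighborhood of its own illegal turn, so that destroying one illegal turn never accidentally resurrects or creates another. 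Once localization is in hand, the case analysis above is clean and the exponent $r$ can be read off explicitly from $f$.
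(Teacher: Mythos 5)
Your proposal is correct and follows essentially the same route as the paper's (much terser) proof: apply Proposition \ref{quote} to each maximal subpath with exactly one illegal turn to handle turns that are not tips of INPs, apply Lemma \ref{two-INPs} to non-degenerately overlapping INP-subpaths, and take $r = \max(r_1, r_2)$. The extra care you take with localizing cancellations via bounded backtracking is a reasonable elaboration of what the paper leaves implicit, not a different argument.
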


\begin{proof}
It suffices to take $r$ to be the maximum of $r_1$ from Lemma \ref{two-INPs} and $r_2$ from Proposition \ref{quote}. We then apply the latter to any maximal subpath with precisely one illegal turn. If each such subpath contains an INP-subpath, we apply Lemma \ref{two-INPs}
to any maximal subpath which contains precisely two illegal turns for which the corresponding INP-subpaths overlap in at least on edge.
\end{proof}

Bestvina-Handel \cite{BH} proved that every iwip automorphism $\phi$ can be represented by a train track map $f: \tau \to \tau$ such that (up to inversion) there is at most one indivisible Nielsen path in $\tau$ which is fixed by $f$. However, since in this paper we are interested in periodic indivisible Nielsen paths (INP's) rather than fixed ones, these {\em stable} train track maps are not good enough for our purposes.

Nevertheless, the number of INP's in any train track representative of $\phi$ is bounded above by a constant which only depends on $N$, since every INP contributes non-trivially to the {\em index} of $\phi$, which is bounded above by 2N-2, see \cite{GJLL}.  Thus one can apply the above quoted result of \cite{BH} to a suitable power of $\phi$ to obtain:

\begin{prop}
\label{at-most-one-INP}
For every iwip automorphism $\phi$ there is a positive power $\phi^k$ which is represented by a train track map $f: \tau \to \tau$ such that in $\tau$ there is (up to inversion) at most one INP $\eta$.
The endpoints of $\eta$ coincide 
to give a closed loop if and only if $\phi$ is 
induced by a homeomorphism of some surface with precisely one boundary component or puncture. 
\qed
\end{prop}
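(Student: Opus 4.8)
The plan is to take as input any train track map $f_0\colon \tau_0 \to \tau_0$ representing the iwip $\phi$ (Theorem \ref{Bestvina-Handel}) and to perform a controlled \emph{folding/collapsing} modification that removes all INPs but one. The key structural fact to exploit is Remark \ref{train-track-maps}(\ref{two}): the two branches $\gamma,\gamma'$ of an INP $\eta=\gamma'\circ\bar\gamma$ are initial segments of eigenrays $\rho,\rho'$ which agree off those segments, so the endpoints of $\eta$ are $f$-periodic and (after Convention \ref{subdivision-INP}) are vertices. I would first reduce to the case where \emph{every} INP has periodic vertices as endpoints and $f$ permutes the (finitely many, by Remark \ref{finitely-many-INPs}) INPs; passing to a power of $\phi$ if necessary (legitimate since iwips are defined via all powers, Remark \ref{defn:irr}), I may assume each INP $\eta$ satisfies $[f(\eta)]=\eta$ and has fixed endpoints.

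The heart of the argument is the ``folding along an INP'' move of Bestvina--Handel: given an INP $\eta=\gamma'\circ\bar\gamma$ with tip a fixed illegal turn at a vertex $v$, one identifies the two legal branches $\gamma'$ and $\gamma$ via the canonical identification coming from the fact that $f$ maps the turn $(Df(e'),Df(e))$ degenerate, i.e.\ one collapses the ``bigon'' bounded by $\gamma$ and $\gamma'$. This produces a new marked graph $\tau_1$ (still representing $\phi$, since $\eta$ being a Nielsen path means the fold is $\pi_1$-trivial) together with an induced self-map $f_1$; one checks $f_1$ is again a train track map by verifying the train track property on the images of edges, using that no legal turn was destroyed by the fold (the only turn affected is the illegal tip of $\eta$, which is removed). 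The bookkeeping to carry out is: (i) $f_1$ is still expanding, after possibly re-applying Remark \ref{make-expanding}; (ii) the number of INPs strictly decreases — this is where one must argue that folding $\eta$ cannot create new INPs, which follows because any new illegal turn in $\tau_1$ is the image of an old one and the eigenray data (Proposition \ref{gates-eigenrays}, Corollary \ref{eigen-bijection}) is merely pushed forward, so the pairs of eigenrays that could bound an INP are in bijection with the old ones minus $\eta$. Iterating, one reaches a train track map with at most one INP.

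For the final dichotomy: if the surviving INP $\eta$ is a \emph{closed} path, then its two endpoints coincide, so by Remark \ref{train-track-maps}(\ref{two}) the eigenray $\rho$ starting from one side returns, and the closed Nielsen path together with the train track structure gives an invariant ``boundary curve'', from which one builds a surface with one boundary component on which $\phi$ is realized by a homeomorphism — I would cite \cite{BH} for this geometric conclusion rather than reprove it. Conversely, if $\phi$ is induced by a pseudo-Anosov $h$ on a surface $S$ with boundary, the boundary curve is $\phi$-periodic up to free homotopy, hence (after a power) gives a closed Nielsen path, and on the iwip train track this path is indivisible, so the unique INP must be closed. I expect the \textbf{main obstacle} to be step (ii) of the folding argument: showing rigorously that the fold along one INP neither creates a new INP nor resurrects a previously-removed one, and that the process terminates — this requires carefully tracking the eigenray/gate combinatorics through each fold and controlling the interaction when several INPs share endpoints or overlap (Lemma \ref{two-INPs}, Proposition \ref{ILT-decrease} are the relevant tools). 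The cleanest route may well be to quote the relevant stabilization result from \cite{BH} directly, since a self-contained proof essentially reconstructs a substantial piece of the Bestvina--Handel algorithm.
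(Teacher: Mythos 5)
The paper does not prove this proposition at all: it is stated as a quoted result of Bestvina--Handel (the existence of \emph{stable} train track representatives, together with the characterization of when the unique INP is closed), with \cite{BH} as the reference. Your closing suggestion --- simply cite \cite{BH} --- is therefore exactly what the paper does, and as a citation the statement is unproblematic.

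As a self-contained argument, however, your sketch has genuine gaps, concentrated where you yourself suspect them. First, the fold you describe is not a well-defined operation: the two branches $\gamma,\gamma'$ of an INP do not cobound a bigon and admit no canonical identification (they generally have different combinatorial and PF-lengths; what the dynamics identifies is only the germ of the two directions at the tip, via $Df^t$, together with the cancelled subsegments of $f^t(\gamma)$ and $f^t(\gamma')$). The actual Bestvina--Handel move is a \emph{partial} fold of the two edges forming the illegal turn, followed by tightening, collapsing of pretrivial forests and possibly subdivision, and it need not preserve the train track property without further work. Second, the claim that the number of INPs strictly decreases and that no new INP is created is precisely the hard content of the stability argument; folds do create new illegal turns, and BH control the process not by counting INPs but by a lexicographic complexity led by the Perron--Frobenius eigenvalue, so your proposed induction has no proof of termination. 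Third, passing to a power of $\phi$ yields a representative of $\phi^k$, not of $\phi$, so that reduction is not available for the statement as written (note also that the set of \emph{periodic} INPs, which is what Definition \ref{defn-INPs} uses, is unchanged under passing to powers, so this step buys nothing anyway). Finally, the ``closed INP iff geometric'' dichotomy is likewise nontrivial and is part of what \cite{BH} (and the paper's earlier reference to it in \S\ref{iwips}) supplies; your sketch of it is circular in the forward direction. In short: cite \cite{BH} and stop, as the paper does; the reconstruction you outline would require reproving a substantial portion of that paper.
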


Recall that any iwip $\phi$ which is 
induced by a homeomorphism of some surface with precisely one boundary component or puncture can not be atoroidal, see Definition \ref{defn:iwips}, since the boundary curve gives rise to a non-trivial $\phi$-periodic conjugacy class.

\medskip
\subsection{Used turns}
\label{used-turns}

${}^{}$
\smallskip

As before, let $f: \tau \to \tau$ be a train track map which is fixed throughout this subsection.

\begin{defn}
\label{six}
A 
turn $(e, e')$ in $\tau$ is called 
{\em used}\,\footnote{\, Such turns have been termed ``taken'' in C. Pfaff's work which in the mean time has become publicly available, see 
\cite{Pfaff}.}
if there is an edge $e''$ in $\tau$ with the property that for some $t \geq 1$ the image path $f^t(e'')$ {crosses over} this turn.
Otherwise the 
turn $(e, e')$ is called {\em unused}. 
\end{defn}

\begin{rem}
\label{rem-used}
From Definition \ref{six} we derive directly the following facts:
\begin{enumerate}
\item
Every used turn is legal. The converse is in general wrong.
\item
The image turn (under the map $D^2f$) of any used turn is also used. 
\item
The image of an unused turn $(e, e')$ may well be used. There is, however, a constant $s \geq 0$ (which only depends on the total number of turns in $\tau$) such that either $D^2f^s(e, e')$ is used, or else all forward iterates of $(e, e')$ under $D^2f$ are unused.
\end{enumerate}
\end{rem}

Since we know from subsection \ref{rays} that every eigenray $\rho$ in $\tau$ is for some integer $k \geq 1$ the infinite union of all $f^{kt}(e)$, where $t \geq 1$ and $e$ is the initial edge (or edge segment) of $\rho$, we obtain directly:

\begin{lem}
\label{eigenrays-used}
Every eigenray $\rho$ of a train track map $f$ crosses only over used turns.
\qed
\end{lem}

\begin{lem}
\label{prolongation}
If the train track map $f: \tau \to \tau$ represents an iwip automorphism, then for every edge $e$ of $\tau$ 
there is an edge $e'$ which has the same initial vertex as $e$, 
such that $\bar e e'$ is a used legal turn. 
\end{lem}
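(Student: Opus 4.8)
The plan is to produce the desired used legal turn at the initial vertex $v$ of $e$ out of a single sufficiently high iterate $f^t(e)$, exploiting that this image path is forced to wind around $e$ many times.

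First I would invoke Lemma~\ref{invariant-subgraph}(2): a train track map representing an iwip is expanding (Definition~\ref{legal-paths}(3) and Remark~\ref{several-legal}(c)), so there is an exponent $t$ for which the edge path $f^t(e)$ crosses over $e$ or $\bar e$ at least three times. The bound $t\geq 1$ is automatic, since $f^0(e)=e$ has length $1$ while any path crossing $e^{\pm 1}$ three times has length at least $3$. Because $f$ is a train track map and a single edge is a legal path, $f^t(e)$ is legal; in particular it is a reduced edge path $x_1\circ\cdots\circ x_m$ with $m\geq 3$ and all of whose crossed turns are legal.

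Second I would pick out an \emph{interior} occurrence of $e^{\pm 1}$ in this path. Among the at least three indices $i$ with $x_i\in\{e,\bar e\}$, the middle one, say $i_0$, satisfies $1<i_0<m$, so $x_{i_0}$ has both a preceding edge $x_{i_0-1}$ and a following edge $x_{i_0+1}$. If $x_{i_0}=e$, set $e':=\overline{x_{i_0-1}}$; then $e'$ has initial vertex $v$ (the terminal vertex of $x_{i_0-1}$), and the subpath $x_{i_0-1}\circ e$ of $f^t(e)$ crosses over the turn $(e',e)$ at $v$. If $x_{i_0}=\bar e$, set $e':=x_{i_0+1}$; then $e'$ has initial vertex $v$ (the terminal vertex of $\bar e$), and the subpath $\bar e\circ e'$ of $f^t(e)$ crosses over the turn $(e,e')$ at $v$.

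Finally I would conclude. In either case the turn $\bar e\, e' = (e,e')$ is crossed over by $f^t(e)$ with $t\geq 1$ and $e$ an edge of $\tau$, hence it is used by Definition~\ref{six}; here I use that ``an edge of $\tau$'' is permitted to mean $\bar e$, so that the family $\{f^s(e'')\mid e''\text{ an edge of }\tau,\ s\geq 1\}$ is closed under reversal, which in particular makes ``used'' insensitive to the order of the two edges of a turn (relevant in the case $x_{i_0}=e$, where literally $(e',e)$ is crossed). Since $f^t(e)$ is legal, $(e,e')$ is legal, and $e'$ has the same initial vertex $v$ as $e$, which is exactly the assertion. I do not anticipate a real obstacle: the substantive input is entirely Lemma~\ref{invariant-subgraph}(2), and the only point needing care is the orientation bookkeeping — ensuring the turn we obtain sits at the initial vertex of $e$ and not at its terminal vertex — which is precisely why one splits according to whether the interior occurrence is $e$ or $\bar e$ and correspondingly uses the preceding or the following edge.
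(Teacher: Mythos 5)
Your proof is correct and is precisely the argument the paper intends: its own proof of Lemma~\ref{prolongation} consists of the single sentence that it is a direct consequence of Lemma~\ref{invariant-subgraph}(2), and you have filled in exactly the expected details (an interior occurrence of $e^{\pm1}$ in a legal iterate $f^t(e)$ yields an adjacent edge giving a used legal turn at the initial vertex of $e$), matching the analogous argument the paper spells out for Lemma~\ref{2-gates}. Your care with the orientation split and with $t\geq 1$ versus $t\geq 0$ is appropriate and introduces no gap.
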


\begin{proof}
This is a direct consequence of Lemma \ref{invariant-subgraph} (3).
\end{proof}

\begin{lem}
\label{used-gate-turns}
Let $\frak g_1$ and $\frak g_2$ be two gates at the same periodic vertex $v$ of $\tau$, and assume that some turn $(e'_1, e'_2)$ with $e'_i \in \frak g_i$ is used. Then the turn $(e_1, e_2)$ is also used, where $e_i$ is the initial edge of the eigenray $\rho_i$ that starts from the gate $\frak g_i$, for $i = 1$ and $i = 2$. (By Proposition \ref{gates-eigenrays} this is equivalent to stating that each edge $e_i$ is the eigen edge of the gate $\frak g_i$.)
\end{lem}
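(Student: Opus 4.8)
The plan is to exploit the witness for $(e_1', e_2')$ being used, and then apply further iterates of $f$ so that the initial edges $e_1', e_2'$ are pushed under $Df$ onto the eigen edges $e_1, e_2$ of the gates $\frak g_1, \frak g_2$.

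First I would normalize the behaviour of $f$ at $v$. Since $v$ is periodic and, by Remark \ref{gates1}, the induced map on the finitely many gates at $v$ is a bijection, there is an exponent $p\ge 1$ such that $f^p(v)=v$ and $f^p$ fixes every gate at $v$; in particular $Df^p$ restricts to a self-map of each finite set $\frak g_i$. I then claim that the only $Df^p$-periodic edge in $\frak g_i$ is its eigen edge $e_i$: any periodic edge $e'\in\frak g_i$ satisfies $Df^{pm}(e')=e'$ for some $m\ge 1$, hence is an eigen edge of $\frak g_i$, and by Proposition \ref{gates-eigenrays} the eigen edge of $\frak g_i$ is unique, so $e'=e_i$. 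Consequently, iterating $Df^p$ on $\frak g_i$ eventually lands on $e_i$, and (taking a common exponent for $i=1,2$) there is a single $t\ge 1$ with $Df^t(e_1')=e_1$ and $Df^t(e_2')=e_2$; here $e_i$ is indeed the initial edge of the eigenray starting from $\frak g_i$, again by Proposition \ref{gates-eigenrays}.

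Next, by Definition \ref{six} choose an edge $e''$ and an exponent $s\ge 1$ such that the (legal, hence reduced) edge path $f^s(e'')$ crosses over the turn $(e_1', e_2')$, i.e. contains $\bar e_1'\circ e_2'$ as a subpath. Applying $f^t$, the edge path $f^{s+t}(e'')=f^t(f^s(e''))$ contains $f^t(\bar e_1'\circ e_2')=\overline{f^t(e_1')}\circ f^t(e_2')$ as a subpath, since the image of a subpath is a subpath of the image. The last edge of $\overline{f^t(e_1')}$ is $\overline{Df^t(e_1')}=\bar e_1$ and the first edge of $f^t(e_2')$ is $Df^t(e_2')=e_2$, so this subpath crosses precisely the turn $(e_1,e_2)$ at the concatenation point. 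Finally, $(e_1',e_2')$ is legal because it is used (Remark \ref{rem-used}), hence its image $D^2f^t((e_1',e_2'))=(e_1,e_2)$ is legal by Remark \ref{several-legal}, so in particular $e_1\ne e_2$ and the crossing is genuine. Thus $f^{s+t}(e'')$, the image of an edge under $f^{s+t}$, crosses over $(e_1,e_2)$, which by Definition \ref{six} means $(e_1,e_2)$ is used.

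The only delicate point is the normalization step: one must arrange that a \emph{single} exponent $t$ sends both $e_1'$ and $e_2'$, under $Df$, onto the eigen edges of the very gates they started in (and not onto eigen edges of other gates in the same orbit). This is exactly what passing to an iterate fixing $v$ together with all gates at $v$, combined with uniqueness of the eigen edge in each gate, achieves; everything after that is the routine observation that crossing over a turn is preserved when one applies $f^t$ and tracks initial edges via $Df^t$.
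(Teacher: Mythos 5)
Your proof is correct and follows essentially the same route as the paper's: both arguments find an exponent $t$ for which $Df^t$ sends $e_1'$ and $e_2'$ to the eigen edges $e_1$ and $e_2$ of their respective gates, and then apply $f^t$ to the witness path $f^s(e'')$ crossing $(e_1',e_2')$. You simply spell out in more detail (via periodicity of $v$, the bijectivity of the induced map on gates, and uniqueness of the eigen edge) why such a common exponent exists, which the paper asserts directly from the definition of a gate.
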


\begin{proof}
From the definition of a gate it follows directly that for every periodic gate $\frak g_i$ there is an exponent $t_i \geq 1$ such that $D f^{t_i}$ maps every edge $e'$ in $\frak g_i$ to the eigen edge $e_i$ of $\frak g_i$.

Now, if the turn $(e'_1, e'_2)$ is used, then for some edge $e$ and some $t \geq 1$ the edge path $f^t(e)$ crosses over the turn $(e'_1, e'_2)$. It follows that 
the path $f^{t t_1 t_2}(e)$ crosses over the turn $(e_1, e_2)$.
\end{proof}

\medskip
\subsection{BFH's ``stable'' lamination}
\label{section-BFH-lamination}

${}^{}$
\smallskip

The initials BFH in the following definition refer to Bestvina-Feighn-Handel, who introduced and studied the following lamination in \cite{BFH}; in particular they showed a 
special 
attraction property of this lamination, for train track maps which represent iwip automorphisms.
We give here a definition in more modern terms which is slightly more general and fits better in the set-up of this paper.

\begin{defn}
\label{BFH-lamination}
For any train track map $f: \tau \to \tau$ we define the {\em BFH-attracting lamination} $L^2_{BFH}(f)$ as the lamination which is generated 
(in the sense defined in subsection \ref{laminations}) by 
the family of paths $f^t(e)$, for any edge $e$ of $\tau$ and any exponent $t \geq 0$.
\end{defn}

It is easily seen that, if $f$ represents an automorphism $\alpha$ of $\FN$, that $L^2_{BFH}(f)$ is invariant under the natural map induced by $\alpha$ on $\partial^2 \FN$.

The following is a direct consequence of the definition of ``used turn'' in the previous section.

\begin{lem}
\label{BFH-used}
Let $f: \tau \to \tau$ be a train track map which represents an iwip automorphism. Then any leaf $(X, Y)$ of the lamination $L^2_{BFH}(f)$  is realized by a biinfinite path 
$\gamma = \gamma_\tau(X, Y)$
in $\tau$ which only crosses over used turns.
\qed
\end{lem}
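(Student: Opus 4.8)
The plan is to prove Lemma \ref{BFH-used} by unwinding the definition of the BFH-attracting lamination and combining it with Lemma \ref{prolongation}. Recall that $L^2_{BFH}(f)$ is, by Definition \ref{BFH-lamination}, generated by the family $\CP = \{f^t(e) \mid e \in {\rm Edges}(\tau),\ t \geq 0\}$, in the sense of the ``generated by'' terminology introduced in \S\ref{laminations}. So a pair $(X,Y) \in \partial^2 \FN$ is a leaf of $L^2_{BFH}(f)$ if and only if every finite subpath of the geodesic realization $\gamma = \gamma_\tau(X,Y)$ occurs as a subpath of some $f^t(e)$ (or its inverse). The whole content of the lemma is therefore to show: every turn that $\gamma$ crosses over is used (and legal).

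First I would note that since $f$ is a train track map, each path $f^t(e)$ is legal, hence reduced, and every turn it crosses over is by definition a used turn (Definition \ref{six} applied with $e'' = e$). Now let $(e_1, e_2)$ be an arbitrary turn crossed over by $\gamma = \gamma_\tau(X,Y)$; concretely $\gamma$ contains the subpath $\bar e_1 \circ e_2$ (up to passing to non-degenerate initial segments, which does not affect the turn). This subpath is finite, so by the generation property it is a subpath of some $f^t(e)$ or of $\overline{f^t(e)}$. In either case, the turn $(e_1,e_2)$ is a turn crossed over by a legal path of the form $f^t(e)$ (using that $\overline{f^t(e)}$ crosses exactly the same turns as $f^t(e)$, read in the opposite order). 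Hence $(e_1,e_2)$ is used, and by Remark \ref{rem-used}(1) it is legal as well. Since $(e_1,e_2)$ was arbitrary, $\gamma$ crosses over only used (legal) turns, which is exactly the assertion.

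One small point that needs care is the passage between ``$\gamma$ contains the subpath $\bar e_1 e_2$'' and ``the turn $(e_1,e_2)$ appears on some $f^t(e)$'': the generation property as stated quantifies over \emph{finite subpaths} of $\gamma$, and a subpath consisting of the last edge of one edge and the first edge of the next is finite, so this is immediate; but one should also remember the convention from Definition \ref{turns}(b) that crossing a turn may involve only non-degenerate initial segments $e_1', e_2'$ of the edges, and that a subpath of $f^t(e)$ realizing $\bar e_1' e_2'$ still exhibits the turn $(e_1, e_2)$ since turns are determined by the germs of edges at a vertex. This is where one uses that the $\gamma_i$ in the generating family may be taken to be full edge paths $f^t(e)$ and that the notion of ``crossing a turn'' is local at the vertex. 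I do not expect any genuine obstacle here: the lemma is essentially a bookkeeping consequence of the two cited facts, which is presumably why the excerpt marks it with \qed and calls it ``an easy consequence.'' If pressed to identify the single most delicate step, it is making sure that the legality half of the conclusion is recorded — this is not automatic from ``used'' being phrased via train track images unless one invokes Remark \ref{rem-used}(1), so that citation should be explicit.
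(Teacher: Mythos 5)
Your proposal is correct and is essentially the argument the paper intends: the lemma is stated without proof as a direct consequence of Definition \ref{six}, and unwinding the ``generated by'' condition so that every turn of $\gamma_\tau(X,Y)$ sits inside a finite subpath of some $f^t(e)$ (necessarily with $t\ge 1$, since a single edge crosses no turn) is exactly the intended bookkeeping. The paper's additional citation of Lemma \ref{prolongation} serves only to guarantee that such biinfinite leaves exist at all (i.e.\ that finite paths $f^t(e)$ can be prolonged through used legal turns), not the ``only used turns'' assertion itself, so its absence from your argument is not a gap.
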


The converse of this statement is not true: It is easy to find biinfinite reduced paths $\gamma$ in $\tau$ which only cross over used turns and which represent pairs $(X, Y) \in \partial^2\FN$, i.e. $\gamma = \gamma_\tau(X,Y)$, which do not belong to $L^2_{BFH}(f)$.  However, one can show (compare also Corollary \ref{no-singularity} (b)) that $(X, Y)$ does belong to $L^2_{BFH}(f)$ if for the whole $\phi$-orbit of $(X,Y)$ the geodesic realizations $\gamma_t = \gamma_\tau(\phi^t(X), \phi^t(Y))$ (with $t \in \Z$, see Remark \ref{no-lift}) only cross over used turns.

As a special case of this observation we have the following; an elementary proof derives as a direct consequence from our definitions and 
the facts stated in the paragraph just before Lemma \ref{eigenrays-used}:

\begin{lem}
\label{eigenray-BFH}
Any eigenray $\rho$ in $\tau$ generates a lamination $L^2(\rho)$ which satisfies
$L^2(\rho) \subset L^2_{BFH}(f)$.
\qed
\end{lem}

The following proposition is due to Bestvina-Feighn-Handel. 
We'd like to point out here 
that an alternative proof of part (1) follows as corollary from our 
proof of 
Theorem \ref{main-result2} 
in section \ref{steps-five-to-seven},
see Remark \ref{last}.

We would also 
like to recommend to the 
less experienced 
reader to try proving parts (2) and (3) as exercises, as they are quite doable 
(using the fact that the transition matrix $M(f)$ 
is primitive, see Lemma \ref{invariant-subgraph} (2)),
and as this will enhance the reader's understanding of the main subjects treated in this paper.

\begin{prop}[\cite{BFH}]
\label{from-BFH}
Let $f: \tau \to \tau$ be a train track representative of an iwip automorphism $\alpha$ of $\FN$.

\smallskip
\noindent
(1)
The lamination $L^2_{BFH}(f)$ depends only on $\alpha \in \Out(\FN)$ and not on the particular choice of the train track representative $f$.

\smallskip
\noindent
(2)
The lamination $L^2_{BFH}(f)$ is minimal (see Definition \ref{minimal-laminations}).

\smallskip
\noindent
(3)
One has $L^2_{BFH}(f^t) = L^2_{BFH}(f)$ for any integer $t \geq 1$.
\qed
\end{prop}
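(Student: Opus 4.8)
The plan is to establish part (2) first, as a self-contained fact about a single train track map $f\colon\tau\to\tau$, and then to deduce part (1) from it together with the Bounded Cancellation Lemma (Remark \ref{bounded-back}). For part (2) the one external input I would use is that, since $f$ represents an iwip, its transition matrix $M(f)$ (whose $(e,e')$-entry counts the crossings of $f(e)$ over $e'$ or $\bar{e'}$) is \emph{primitive}: this is standard, with Lemma \ref{invariant-subgraph}(1) supplying irreducibility (no proper $f$-invariant subgraph) and the usual iwip argument ruling out a nontrivial period; so there is an exponent $k$ with $M(f)^k$ strictly positive, i.e.\ $f^k(e)$ crosses over every edge of $\tau$, for every edge $e$. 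Two consequences. First, for a single fixed edge $e_0$, every generating path $f^t(e)$ is --- up to reversal --- a subpath of $f^{t+k}(e_0)$ (apply $f^t$ to an occurrence of $e$ inside the legal path $f^k(e_0)$; there is no cancellation, since $f$ is a train track map and legal paths stay legal), so $L^2_{BFH}(f)=L^2(\{f^t(e_0)\mid t\ge 0\})$. Second, and this is the heart of (2): fix a leaf $(X,Y)$ of $L^2_{BFH}(f)$ and a finite subpath $w$ of its realization $\gamma_\tau(X,Y)$; by definition $w$ is a subpath of some $f^t(e)$, hence by primitivity $w$ is, up to reversal, a subpath of $f^{t+k}(\epsilon)$ for \emph{every} edge $\epsilon$. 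Given any other leaf $(X',Y')$, a long enough finite subpath $\sigma$ of $\gamma_\tau(X',Y')$ lies inside some $f^T(e'')$ with $T\ge t+k$; writing $f^{T-t-k}(e'')=\epsilon_1\cdots\epsilon_L$ one has $f^T(e'')=f^{t+k}(\epsilon_1)\cdots f^{t+k}(\epsilon_L)$ with no cancellation (each block legal), and any window of length exceeding $2\max_\epsilon L(f^{t+k}(\epsilon))$ in this concatenation contains a whole block; so $\sigma$ contains $w$ up to reversal. Thus $w$ occurs, up to the flip, in the realization of \emph{every} leaf, which is exactly the statement that $\FN\cdot(X,Y)\cup\FN\cdot(Y,X)$ is dense; by Definition \ref{minimal-laminations} this is the minimality of $L^2_{BFH}(f)$. (This reproduces the classical fact that a primitive substitutive system is uniformly recurrent.)

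For part (1), let $f\colon\tau\to\tau$ and $f'\colon\tau'\to\tau'$ both represent the iwip $\alpha$, let $h\colon\tau\to\tau'$ be a graph map realizing the change of marking, so that $[hf^n(\gamma)]=[(f')^nh(\gamma)]$ for every edge path $\gamma$ and every $n\ge 0$, and fix a homotopy inverse $\bar h\colon\tau'\to\tau$. I would pick a periodic vertex $v$ of $f$ and the eigen edge $e_1$ of a gate at $v$ (Proposition \ref{gates-eigenrays}), with $Df^p(e_1)=e_1$, and let $\rho=\bigcup_k f^{pk}(e_1)$ be the associated eigenray. Every finite subpath of $\rho$ is a generator of $L^2_{BFH}(f)$, so $L^2(\rho)\subseteq L^2_{BFH}(f)$ (viewing $L^2(\rho)\subseteq\partial^2\FN$ through the realization in $\tau$); and $\rho$ is uniformly recurrent by the primitivity argument above, so $L^2(\rho)$ is non-empty and minimal, whence $L^2(\rho)=L^2_{BFH}(f)$ by part (2). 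On the $\tau'$-side, $\rho^{\ast}:=[h(\rho)]=\lim_k[(f')^{pk}(h(e_1))]$ satisfies $[(f')^p\rho^{\ast}]=\rho^{\ast}$ and has only finitely many illegal turns (by Remark \ref{ILT-defn} the count $\ILT$ does not grow under reduced iteration), so a tail of $\rho^{\ast}$ is an eigenray of $f'$; the same reasoning gives $L^2(\rho^{\ast})=L^2_{BFH}(f')$, now through the realization in $\tau'$. Finally, Bounded Cancellation for $h$ and for $\bar h$ shows that, after shrinking finite subpaths by a uniform constant at both ends, ``subpath of $\gamma_\tau(X,Y)$'' corresponds to ``subpath of $\gamma_{\tau'}(X,Y)$'' and ``subpath of $\rho$'' corresponds to ``subpath of $\rho^{\ast}$'', in both directions; hence $L^2(\rho)$ and $L^2(\rho^{\ast})$ are the same subset of $\partial^2\FN$, so $L^2_{BFH}(f)=L^2_{BFH}(f')$. (An alternative, in the spirit of \cite{BFH}, is to identify $L^2_{BFH}(f)$ with the attracting fixed point of $\alpha$ in the relevant space of laminations or currents, which is manifestly independent of $f$.)

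The step I expect to be the main obstacle is the final one in part (1): converting the Bounded Cancellation estimates into the clean equality of $L^2(\rho)$ and $L^2(\rho^{\ast})$ as subsets of $\partial^2\FN$. One has to control simultaneously how much cancellation eats into $h$-images both when reducing $h$ of a geodesic realization $\gamma_\tau(X,Y)$ and when reducing $h(\rho)$, and use $\bar h$ to get the containments in both directions; there is also a small side point, that the finitely many INPs sitting in the front of $\rho^{\ast}$ contribute no new leaves (so that $L^2(\rho^{\ast})$ depends only on the eigenray tail of $\rho^{\ast}$), which needs its own short argument. By contrast part (2) is routine once primitivity of $M(f)$ is in hand; the only thing needing care there is the repeated ``no cancellation'' claims, which hold precisely because $f$ is a train track map and every path in sight --- $f^k(e_0)$ and the legal concatenations formed from its iterated images --- is legal.
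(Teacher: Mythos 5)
The paper does not actually prove this proposition: it is quoted from \cite{BFH}, with the remark that part (1) also follows from Theorem \ref{main-result2} (whose proof uses part (2) but not part (1)) and that part (2) is recommended as an exercise. So your proposal should be judged against \cite{BFH} rather than against an argument in the text. Your part (2) is precisely that exercise, done correctly: primitivity of the transition matrix (irreducibility from Lemma \ref{invariant-subgraph} (1) applied to all powers of $f$, which kills a nontrivial period) gives $k$ with $f^k(e)$ crossing every edge, and the decomposition $f^T(e'')=f^{t+k}(\epsilon_1)\cdots f^{t+k}(\epsilon_L)$ into legal, hence cancellation-free, blocks shows that every generator occurs, up to reversal, in every sufficiently long window of every leaf; that is uniform recurrence, hence minimality in the sense of Definition \ref{minimal-laminations}. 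One cosmetic point: what you literally exhibit is that subwords of $(X,Y)$ occur in every leaf, whereas density of $\FN\cdot(X,Y)\cup\FN\cdot(Y,X)$ requires subwords of every leaf to occur in $\gamma_\tau(X,Y)$; since your argument is uniform over all ordered pairs of leaves the two statements are interchangeable, but you should say so.

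For part (1) your strategy (transport an eigenray by the marking-change map and identify the generated laminations via bounded cancellation) is sound, but the step ``a tail of $\rho^{*}$ is an eigenray of $f'$'' does not follow from finiteness of $\ILT(\rho^{*})$ alone: a legal ray $\sigma$ with $[f'^{\,p}(\sigma)]$ merely sharing a tail with $\sigma$ is not by Definition \ref{eigenrays} an eigenray, and producing a genuinely $f'^{\,p}$-fixed tail needs either the correspondence between attracting fixed points of $\partial\Phi$ and eigenrays from \cite{GJLL}, or a separate expansion argument controlling the merge points of the rays $[f'^{\,pk}(\rho^{*})]$. You can sidestep this: by your part (2) both $L^2_{BFH}(f)$ and $L^2_{BFH}(f')$ are minimal, and two minimal algebraic laminations sharing a single leaf coincide (each is the orbit closure of any of its leaves). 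Hence it suffices to produce one common leaf, i.e.\ one leaf of $L^2_{BFH}(f)$ whose realization in $\tau'$ has all its finite subpaths occurring in paths $f'^{\,t}(e')$; the bounded-cancellation bookkeeping you already set up, applied to a biinfinite used-legal leaf instead of to $\rho$ with its singular origin, delivers exactly that and avoids the eigenray claim altogether. Alternatively one can take the paper's own route and read (1) off Theorem \ref{main-result2} (2), applied to $\phi=\alpha^{-1}$, since the unique minimal sublamination of $L^2(T_+(\alpha^{-1}))$ depends only on $\alpha$.
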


\medskip
\subsection{The limit tree and BBT}
\label{limit-tree}

${}^{}$
\smallskip

In this subsection we freely use the terminology and notation about $\R$-trees introduced previously in section \ref{new-background}.

\smallskip

Let $T$ be the forward limit tree of the iwip automorphism $\alpha$ (i.e.  $T \alpha = \lambda T$ for $\lambda := \lambda_+(\alpha) > 1$).  Let $f: \tau \to \tau$ be any train track representative of $\alpha $. 

It is well known (see \cite{GJLL}) that for the universal covering $\tilde \tau$ of $\tau$ there exists an $\FN$-equivariant surjective map $i: \tilde \tau \to T$ which is injective on legal paths (where a turn in $\tilde \tau$ is legal if and only if its image turn in $\tau$ is legal). If one uses the map $\tilde \tau$ to pull back the metric on $T$ to 
define an edge length 
for every edge $e$ of $\tilde \tau$ an thus (by the $\FN$-equivariance of $i$) also for the edges of $\tau$, then one obtains precisely the Perron-Frobenius length $L^{PF}(e)$ for some 
row
eigenvector $\vec v^{\, *} = (L^{PF}(e))_{e \in {\rm Edges}(\tau)}$ (with eigenvalue $\lambda$) of the non-negative primitive transition matrix $M(f)$, see 
Remark \ref{make-expanding} (2).
One obtains:

\begin{rem}
\label{legal-isometric}
(1)
The map $i: \tilde \tau \to T$ has the property that, with respect to the PF-metric $L^{PF}$, every finite legal path $\gamma$ in $\tilde \tau$ is mapped isometrically to the geodesic segment $i(\gamma)$ in $T$.

\smallskip
\noindent
(2)
In particular, every eigenray $\rho$ in $\tau$ has the property that any of its lifts $\tilde \rho$ in $\tilde \tau$ is mapped by the map $i$ isometrically to a ray in $T$ (which is an eigenray of a homothety $H: T \to T$, with stretching factor $\lambda$ as above, which realizes a suitable representative $\Phi \in \Aut(\FN)$ of $\phi$, see \cite{Lu2}, \S 4).

\smallskip
\noindent
(3)
Any INP $\eta = \gamma' \circ \gamma$ in $\tau$, on the other hand, lifts to paths $\tilde \eta = \tilde \gamma' \circ \tilde \gamma$ in $\tilde \tau$ which are completely ``folded'' by $i$:  one obtains $i(\tilde \gamma) = i(\tilde \gamma')$ in $T$.
\end{rem}

The map $i$ satisfies the {\em bounded backtracking property (BBT)}: There is a constant $\BBT(i) \geq 0$ such that for any two points $x, y \in \tilde \tau$ the geodesic segment $[x, y] \subset \tilde \tau$ is mapped by $i$ into the $\BBT(i)$-neighborhood of the geodesic segment $[i(x), i(y)]$:
$$i([x,y]) \subset \cal N_{BBT(i)}([i(x), i(y)])$$
Based on results of \cite{BFH} it has been shown in \cite{GJLL} that:
$$\BBT(i) \leq \vol^{PF}(\tau) := \sum_{e \in {\rm Edges}(\tau)} L^{PF}(e)$$

\begin{lem}
\label{long-legal-in-short}
Consider any element $w\in \FN$, with translation length $0 \leq || w ||_T < c$ on $T$, 
where $c$ is the smallest PF-length of any loop in $\tau$, and let $\hat \gamma$ be a reduced loop  in $\tau$ which represents the conjugacy class $[w] \subset \FN$. Let $\gamma'$ be a legal subpath of $\hat \gamma$. The PF-length of $\gamma'$ satisfies: 
$$L^{PF}(\gamma') \leq || w ||_T + 4 \BBT(i)$$ 
(Indeed, one can also show the stronger inequality $L^{PF}(\gamma') \leq || w ||_T$ + 2 \BBT(i))
\end{lem}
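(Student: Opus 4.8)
The plan is to use the map $i: \tilde\tau \to T$ from Remark \ref{legal-isometric} together with the bounded backtracking property, applied to an axis for $w$. First I would lift the reduced loop $\hat\gamma$ to a biinfinite path $\tilde{\hat\gamma}$ in $\tilde\tau$, which is the axis $\Ax_{\tilde\tau}(w)$ (or at least a line invariant under $w$ up to the compact error that reduction removes), so that $w$ acts on $\tilde{\hat\gamma}$ by translation by $L^{PF}(\hat\gamma)$ with respect to the PF-metric. Lift the legal subpath $\gamma'$ of $\hat\gamma$ to a legal segment $\tilde\gamma'$ sitting inside $\tilde{\hat\gamma}$, with endpoints $x, y \in \tilde\tau$; then by Remark \ref{legal-isometric} the image $i(\tilde\gamma') = [i(x), i(y)]$ is a geodesic segment in $T$ of length exactly $L^{PF}(\gamma')$.

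The core estimate is then to bound $d_T(i(x), i(y))$ in terms of $\|w\|_T$. Pick a point $p \in \tilde{\hat\gamma}$ on the axis and consider the three points $p$, $wp$, $w^k p$ for a suitable power $k$; by BBT, the image under $i$ of the geodesic $[p, w^k p]$ in $\tilde\tau$ stays in the $BBT(i)$-neighborhood of the geodesic $[i(p), i(w^k p)]$ in $T$. Since $i$ is $\FN$-equivariant and $w$ acts on $T$ with translation length $\|w\|_T < c$, the orbit points $i(w^j p)$ all lie within $BBT(i)$ (after one uses BBT once more, or directly because $\|w\|_T$ is small) of the characteristic set / axis of $w$ in $T$, which has diameter controlled by $\|w\|_T$. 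Concretely: $d_T(i(p), i(w^k p)) \le k\|w\|_T + (\text{bounded terms})$ is the wrong direction — instead one wants that $i(p)$ and $i(w p)$ are at distance at most $\|w\|_T + 2BBT(i)$ apart along the image of the axis, and more importantly that the image of the axis segment between them has $T$-length at most $\|w\|_T + 4BBT(i)$: the axis segment $[p, wp]$ maps into the $BBT(i)$-neighborhood of $[i(p), i(wp)]$, the latter has length $\le \|w\|_T + 2BBT(i)$ (since $i(wp) = w\, i(p)$ and $\|w\|_T$ is the minimal displacement), and folding the $BBT(i)$-collar on each end gives a path covering $i([p,wp])$ of length $\le \|w\|_T + 4BBT(i)$. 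Since any legal subpath $\gamma'$ of $\hat\gamma$ of PF-length $\le L^{PF}(\hat\gamma)$ is contained (after translating by a power of $w$) in a fundamental domain $[p, wp]$ of the axis, and $i$ is isometric on it, we get $L^{PF}(\gamma') = d_T(i(x),i(y)) \le \|w\|_T + 4BBT(i)$. The hypothesis $\|w\|_T < c \le L^{PF}(\hat\gamma)$ guarantees that a legal subpath indeed cannot wrap around more than once, so that comparing with a single fundamental domain suffices.

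For the parenthetical sharper bound $L^{PF}(\gamma') \le \|w\|_T + 2BBT(i)$: here one should not fold both collars wastefully. Take the two endpoints $x, y$ of the lift $\tilde\gamma'$; their images $i(x), i(y)$ lie within $BBT(i)$ of the axis of $w$ in $T$ by BBT applied to a long segment of $\tilde{\hat\gamma}$ through both, and the axis of $w$ in $T$ is a single geodesic line on which $w$ translates by $\|w\|_T$. Projecting $i(x), i(y)$ orthogonally to that axis moves each by $\le BBT(i)$, and the projections are separated by at most $\|w\|_T$ (since $\tilde\gamma'$ sits in one fundamental domain of the axis action, using $\|w\|_T < c$); hence $d_T(i(x), i(y)) \le \|w\|_T + 2BBT(i)$, and as before this equals $L^{PF}(\gamma')$.

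The main obstacle I expect is the careful handling of the difference between the lift of the reduced loop $\hat\gamma$ and the genuine axis $\Ax_{\tilde\tau}(w)$ — these agree only outside a uniformly bounded backtracking region — and, more seriously, controlling how far the $i$-image of the axis in $\tilde\tau$ strays from the axis (or bridge) of $w$ in $T$ when $T$ may have dense orbits and $w$ may act with zero translation length. When $\|w\|_T = 0$ the "axis" degenerates to a fixed point $Q(w) \in \wbar T$, and one must argue that $i(\tilde{\hat\gamma})$ then lies in a bounded neighborhood of that point, so that any legal subpath has image of length $\le 4BBT(i)$ (resp. $2BBT(i)$); this is exactly the BBT statement applied with the two ends of the axis mapping to the same point, so it does go through, but it is the delicate case to phrase correctly. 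Everything else is a routine triangle-inequality bookkeeping once the geometric picture — legal paths map isometrically, axes map coarsely to axes — is set up.
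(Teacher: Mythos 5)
Your proposal is correct and follows essentially the same route as the paper's proof: lift $\hat\gamma$ to a $w$-invariant line in $\tilde\tau$, observe that the legal lift $\tilde\gamma'$ sits inside one fundamental domain $[\tilde P, w\tilde P]$ and is mapped isometrically by $i$ (Remark \ref{legal-isometric}), and bound the distance between the images of its endpoints by $\|w\|_T + 4\BBT(i)$ via two applications of the $\BBT(i)$-collar around the segment $[i(\tilde P), w\, i(\tilde P)]$, whose length is at most $\|w\|_T + 2\BBT(i)$. One small correction: the reason $\gamma'$ cannot wrap around $\hat\gamma$ is not the length comparison $\|w\|_T < c \le L^{PF}(\hat\gamma)$ by itself (which would be circular), but that $\|w\|_T < c$ forces $\hat\gamma$ to be illegal --- otherwise Remark \ref{legal-isometric} would give $\|w\|_T = L^{PF}(\hat\gamma) \ge c$ --- so the legal subpath $\gamma'$ must miss some point $P$ of $\hat\gamma$ and hence lifts into a single fundamental domain $[\tilde P, w\tilde P]$.
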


\begin{proof}
We can lift $\hat \gamma$ to a biinfinite geodesic $\tilde \gamma$ in the universal covering $\tilde \tau$, which is mapped by $i: \tilde \tau \to T$ to a (possibly non-reduced) path $i(\tilde \gamma) $ in $T$ that covers the axis ${\rm Ax}(w)$ in $T$. 
The action of $w$ on $\tilde \tau$ and on $T$ fixes $\tilde \gamma$ and ${\rm Ax}(w)$ and translates each of them (by the amount of $|| w ||_T$, for ${\rm Ax}(w)$).

From the assumption $||w||_T < c$ it follows 
(by Remark \ref{legal-isometric})
that $\hat \gamma$ is not a legal loop, so that in particular the legal subpath $\gamma'$ can not wrap around all of $\hat \gamma$. Let $P$ be a point on $\hat \gamma$ which is not contained in $\gamma'$, and let $\tilde P$ be a lift of $P$ to $\tilde \gamma$.
Hence the point 
$P_T := i(\tilde P)$ lies in the $\BBT(i)$-neighborhood of ${\rm Ax}(w)$. In particular it follows that $d(P_T, wP_T) \leq ||w||_T + 2\BBT(i)$.

We can now lift $\gamma'$ to a legal path $\tilde \gamma'$ which is contained in the geodesic segment $[\tilde P, w\tilde P] \subset \tilde \gamma$. Hence the endpoints of $i(\tilde \gamma')$ must lie in the $\BBT(i)$-neighborhood of $[P_T, w P_T]$. Thus they have distance at most $||w||_T + 4\BBT(i)$. But $\tilde \gamma'$ is legal and hence (Remark \ref{legal-isometric}) mapped isometrically to its $i$-image.  Thus $\tilde \gamma'$ and hence also $\gamma'$ have PF-length bounded above by $||w||_T + 4\BBT(i)$.
\end{proof} 

\begin{rem}
\label{quasi-isometry}
(1)
Note that with respect to the Perron-Frobenius length $L^{PF}$ any legal path $\gamma$ satisfies $L^{PF}(f(\gamma)) = \lambda L^{PF}(\gamma)$.  In particular, it follows that no edge $e$ can have $L^{PF}(e)$ equal to $0$,
since otherwise the subgraph $\tau_0 \subset \tau$ defined by all such edges would contradict Lemma \ref{invariant-subgraph} (1).

\smallskip
\noindent
(2)
As a consequence we observe that the simplicial edge length $L$ and the Perron-Frobenius edge length $L^{PF}$ give rise to quasi-isometric metrics on the universal covering $\tilde \tau$.
\end{rem}

\section{Steps 1 and 2}
\label{steps-one-and-two}

The material in this section is reminiscent to some of the techniques used previously by the second author in \cite{Lu3}.

Let $T_+$ be the forward limit tree of the atoroidal iwip automorphism $\phi$ (i.e.  $T_+ \phi = \lambda_+ T$ for $\lambda_+ > 1$).  Let $f_+: \tau_+ \to \tau_+$ be a train track representative of $\phi$. 
By Convention \ref{subdivision-INP} and Proposition \ref{at-most-one-INP}
we can assume 
the following,
after first having possibly replaced $\phi$ by a positive power of itself:

\begin{hyp}
\label{INP-hyp}
There is 
(up to inversion) 
at most one INP $\eta$ in $\tau_+$, and the two endpoints of $\eta$ are distinct vertices of $\tau_+$.
\end{hyp}

\begin{defn}
\label{totally-illegal}
For any constant $C \geq 1$ a (possibly infinite or bi-infinite) reduced path $\gamma$ in $\tau_+$ is called  {\em totally $C$-illegal} if every legal subpath $\gamma'$ of $\gamma$ has 
simplicial length (see subsection \ref{tt-maps})
$$L(\gamma') \leq C \, .$$
\end{defn}

\begin{rem}
\label{inequalities}
The Definition \ref{totally-illegal} is motivated by Lemma \ref{legal-bounded} below. Another, rather useful property of any finite totally $C$-illegal path $\gamma$, which follows directly from the definition, is given by the second of the following inequalities (while the first one doesn't require any hypotheses):
$$\ILT(\gamma) +1 \leq L(\gamma) \leq C(\ILT(\gamma) + 1)$$
\end{rem}

\begin{lem}
\label{legal-bounded}
There exists a constant $C \geq 1$ such that for every pair $(X,Y)$ in the dual lamination $L^2(T_+) \subset \partial^2\FN$ the reduced biinfinite path $\gamma = \gamma_{\tau_+}(X, Y)$ in $\tau_+$ (the ``geodesic realization'' of the pair $(X,Y)$, see Definition \ref{covering-versus-base})
is totally $C$-illegal.
\end{lem}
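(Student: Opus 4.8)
The plan is to exploit the characterization of $L^2(T_+)$ from Remark~\ref{dual-lamination-criterion} together with the length estimate in Lemma~\ref{long-legal-in-short}. Concretely, fix the train track representative $f_+: \tau_+ \to \tau_+$ and equip $\tau_+$ with the Perron--Frobenius metric $L^{PF}$; let $c$ be the smallest $L^{PF}$-length of a loop in $\tau_+$, and set $K := c + 4\,\BBT(i)$ (the constant from Lemma~\ref{long-legal-in-short}). I claim that every legal subpath $\gamma'$ of $\gamma_{\tau_+}(X,Y)$ has $L^{PF}(\gamma') \leq K$. Since $L^{PF}$ and the simplicial metric $L$ are quasi-isometric on $\tilde\tau_+$ (Remark~\ref{quasi-isometry}(2)), this immediately yields a simplicial bound $C$ depending only on $f_+$, which is exactly the conclusion of Lemma~\ref{legal-bounded}.

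The main step is to prove the $L^{PF}$-bound. Suppose $\gamma'$ is a finite legal subpath of $\gamma = \gamma_{\tau_+}(X,Y)$ with $(X,Y) \in L^2(T_+)$. By Remark~\ref{dual-lamination-criterion}, since $\gamma'$ is a finite subpath of the geodesic realization of a leaf of $L^2(T_+)$, for every $\epsilon > 0$ there is $w \in \FN$ with $\|w\|_{T_+} \leq \epsilon$ whose conjugacy class is represented by a reduced loop $\hat\gamma$ in $\tau_+$ containing $\gamma'$ as a subpath. Choose $\epsilon < c$. Then $\|w\|_{T_+} < c$, and Lemma~\ref{long-legal-in-short} applies directly: since $\gamma'$ is a legal subpath of the reduced loop $\hat\gamma$, we get $L^{PF}(\gamma') \leq \|w\|_{T_+} + 4\,\BBT(i) \leq \epsilon + 4\,\BBT(i) < c + 4\,\BBT(i) = K$. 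This bound is uniform in $\gamma'$, so every legal subpath of $\gamma$ has $L^{PF}$-length at most $K$.

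Finally, to pass from a finite bound to the statement for all (possibly infinite or bi-infinite) legal subpaths: any infinite legal subpath would contain finite legal subpaths of arbitrarily large $L^{PF}$-length (since $L^{PF}(e) > 0$ for every edge, by Remark~\ref{quasi-isometry}(1)), contradicting the uniform bound $K$; so in fact $\gamma$ admits no infinite legal subpath, and all its legal subpaths are finite of $L^{PF}$-length at most $K$. Converting via the quasi-isometry constant between $L^{PF}$ and $L$, set $C$ to be $K$ divided by the minimal $L^{PF}$-length of an edge (rounded up); then $L(\gamma') \leq C$ for every legal subpath $\gamma'$ of $\gamma_{\tau_+}(X,Y)$, which is precisely Definition~\ref{totally-illegal}. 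Note $C$ depends only on the fixed train track map $f_+$, as required.

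The only subtle point — and the one I would be most careful about — is the precise invocation of Remark~\ref{dual-lamination-criterion}: one must make sure that the criterion indeed produces, for the \emph{given} legal subpath $\gamma'$ of the geodesic realization, loops $\hat\gamma$ of arbitrarily small translation length containing $\gamma'$; this is exactly what Remark~\ref{dual-lamination-criterion} asserts (it is the ``marked graph'' reformulation of the defining property of $L^2(T_+)$ from Definition~\ref{L2}), so no additional work is needed there. Everything else is routine: the hypothesis $\|w\|_{T_+} < c$ needed by Lemma~\ref{long-legal-in-short} is arranged by choosing $\epsilon$ small, and the constant $C$ is manufactured from $\BBT(i)$, $c$, and the PF-edge-lengths, all of which depend only on $f_+$.
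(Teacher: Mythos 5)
Your proposal is correct and follows essentially the same route as the paper's own proof: invoke Remark~\ref{dual-lamination-criterion} to produce loops of arbitrarily small translation length containing a given legal subpath, apply Lemma~\ref{long-legal-in-short} to bound the $L^{PF}$-length of that subpath by $4\,\BBT(i)$ plus a small term, and convert to a simplicial bound via the quasi-isometry of Remark~\ref{quasi-isometry}(2). Your write-up is merely more explicit than the paper's about choosing $\epsilon<c$ to satisfy the hypothesis of Lemma~\ref{long-legal-in-short} and about ruling out infinite legal subpaths; no substantive difference.
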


\begin{proof} 
By 
Remark \ref{dual-lamination-criterion}
a finite reduced path $\gamma'$ in $\tau_+$ is a subpath of the geodesic realization $\gamma_{\tau_+}(X,Y)$ for some $(X, Y) \in L^2(T_+)$ if and only if for every $\epsilon > 0$ there is an element $w \in \FN$ with translation length on $T_+$ of seize $||w||_{T_+} \leq \epsilon$, such that the conjugacy class of $w$ in $\FN$ is represented by a reduced loop $\hat \gamma$ in $\tau_+$ which contains $\gamma'$ as subpath. Hence the desired inequality is a direct consequence of Lemma \ref{long-legal-in-short}, where the constant $C$ can be calculated from the value $4 \BBT(i)$ for the map $\tilde \tau_+ \to T_+$ explained in subsection \ref{limit-tree} and the quasi-isometry constants between the length functions $L$ and $L^{PF}$, see Remark \ref{quasi-isometry} (2).
\end{proof}

Below we use the following terminology:  If $\gamma$ is a (possibly infinite or biinfinite) path in $\tau_+$, and $\gamma_1$ a subpath of $\gamma$, then a boundary subpath $\gamma'$ of $[f^t_+(\gamma_1)]$ (for some $t \geq 1$) is
{\em cancelled by the reduction of $f^t_+(\gamma)$ to $[f^t_+(\gamma)]$} if, for $\gamma = \gamma_0 \circ \gamma_1 \circ \gamma_2$, when $[f^t_+(\gamma_0)] \circ [f^t_+(\gamma_1)] \circ [f^t_+(\gamma_2)]$ is reduced to give $[f^t_+(\gamma)]$, then $\gamma'$ is entirely contained in one of backtracking subpaths at the concatenation points of the product path $[f^t_+(\gamma_0)] \circ [f^t_+(\gamma_1)] \circ [f^t_+(\gamma_2)]$.

\begin{lem}
\label{contraction}
For any constant $C \geq 1$ there exists an integer $s \geq 0$ with the following property:  
Let $(X,Y) \in \partial^2 \FN$ be such that for every integer $t \geq 0$ the geodesic realization 
$\gamma_t = \gamma_{\tau_+}(\phi^t(X),\phi^t(Y))$ (see Remark \ref{no-lift})
is totally $C$-illegal. 
Then 
for any 
any finite subpath $\gamma'$ of $\gamma_0$ with $\ILT(\gamma') \geq 2$ one has either
$$\ILT([f_+^s(\gamma')]) < \ILT(\gamma') \, ,$$
or else a boundary subpath of $[f_+^s(\gamma')]$ which crosses over at least one illegal turn is completely cancelled by the reduction of $f_+^s(\gamma)$ to $[f_+^s(\gamma)] = \gamma_s$.
\end{lem}

\begin{proof}
Recall first, from subsection \ref{laminations}, that $[f_+^{t}(\gamma_0)] = \gamma_t(X,Y)$.
From Proposition \ref{ILT-decrease} we know that for any finite subpath $\gamma'$ of $\gamma_0$ with 2 or more illegal turns
one has $\ILT([f_+^r(\gamma')]) < \ILT(\gamma')$ (for $r$ as given in Proposition \ref{ILT-decrease}), unless every illegal turn of $\gamma'$ is the tip of an INP entirely contained as subpath in $\gamma'$, such that any two such INP-subpaths which are adjacent on $\gamma'$ either (i) are disjoint, or (ii) intersect precisely in a common endpoint.

The second case (ii) is ruled out by our Hypothesis \ref{INP-hyp}, since $\gamma'$ is a reduced path and the only INP $\eta$ contained in $\tau_+$ is not closed.

In order to rule out the first case (i), we observe that the subpath $\gamma''$ of $\gamma'$, which connects the two endpoints of two adjacent INP-subpaths of $\gamma'$, has to run over at least one edge, and that $\gamma''$ is legal.  Hence, by the  expansiveness of $f_+$, for some $t \geq 1$ the path $[f_+^{t}(\gamma')]$ contains a legal subpath $f_+^t(\gamma'')$ of length bigger than the constant $C$.
This contradicts the hypothesis that $[f_+^t(\gamma)] = \gamma_t$ is totally $C$-illegal, unless one of the two INP-subpaths of $[f_+^t(\gamma')]$, which are on $[f_+^t(\gamma')]$ adjacent to the subpath $f_+^t(\gamma'')$, is completely contained in a boundary subpath of $[f_+^t(\gamma')]$ that is cancelled by the reduction of $f_+^t(\gamma)$ to $[f_+^t(\gamma)] = \gamma_t$.

Hence it suffices to take $s \geq r$ big enough so that $L(f^s(e)) > C$ for every edge $e$ of $\tau_+$.
\end{proof}

\begin{cor}
\label{strong-contraction}
Let $C \geq 1, (X,Y) \in \partial^2 \FN, \gamma'$ and $s \geq 1$ be as in Lemma \ref{contraction}.  Let us furthermore assume that $\ILT(\gamma') \geq 5$.  Then 
one has
$$\ILT([f_+^{s}(\gamma')]) \leq \frac{1}{4}\ILT(\gamma')$$
\end{cor}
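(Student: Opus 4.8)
The plan is to deduce Corollary~\ref{strong-contraction} from Lemma~\ref{contraction} by a simple counting argument, using the key observation that a boundary subpath can be cancelled on \emph{at most two} ends of $\gamma'$ (the initial and terminal end), and that each such cancellation, when it ``swallows'' illegal turns, does so in a controlled way. So first I would invoke Lemma~\ref{contraction} directly: either $\ILT([f_+^s(\gamma')]) < \ILT(\gamma')$ outright, or a boundary subpath of $[f_+^s(\gamma')]$ crossing $\geq 1$ illegal turn gets cancelled by the reduction of $f_+^s(\gamma)$ to $\gamma_s$. The point is to apply this not to $\gamma'$ as a whole but to suitable subpaths of $\gamma'$, so as to localize where illegal turns can survive.

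The main step is a decomposition. Write $\gamma'$ as a concatenation of subpaths and track illegal turns. If $\ILT(\gamma') \geq 5$, I would break $\gamma'$ into (roughly) four consecutive subpieces $\gamma' = \delta_1 \circ \delta_2 \circ \delta_3 \circ \delta_4$, each still containing $\geq 2$ illegal turns where possible, and apply Lemma~\ref{contraction} to each. For an \emph{interior} piece $\delta_i$ (i.e.\ $i=2,3$), a boundary subpath of $[f_+^s(\delta_i)]$ that is cancelled in reducing $f_+^s(\gamma')$ must be cancelled against the adjacent piece $[f_+^s(\delta_{i\pm1})]$; but this cancellation is a backtracking subpath internal to $\gamma'$, and by bounded backtracking (Remark~\ref{bounded-back}) combined with total $C$-illegality it cannot absorb more than a controlled number of illegal turns — in fact, because each adjacent piece is itself totally $C$-illegal, the cancelled portion has bounded length, hence bounded $\ILT$. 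Only the two \emph{outermost} ends of $\gamma'$ can have boundary subpaths cancelled against material of $\gamma$ lying outside $\gamma'$, and even there the total $C$-illegality of $\gamma_s$ forces the cancelled boundary piece to be short. Thus, summing over the pieces, the number of illegal turns of $\gamma'$ that survive into $[f_+^s(\gamma')]$ is at least, say, $\ILT(\gamma') - (\text{bounded loss per junction})$; choosing $s$ large enough (so that $f_+^s$ expands legal segments far beyond $C$, forcing any non-strictly-decreasing piece to have its INP genuinely cancelled) and using $\ILT(\gamma') \geq 5$ to guarantee enough pieces have $\geq 2$ illegal turns, one gets $\ILT([f_+^s(\gamma')]) \leq \tfrac14 \ILT(\gamma')$.

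More concretely, I expect the clean route is: cut $\gamma'$ at illegal turns into maximal subpaths with exactly one illegal turn, apply Proposition~\ref{ILT-decrease}/Lemma~\ref{contraction} to consecutive pairs, and observe that each illegal turn that is \emph{not} strictly eliminated must be the tip of an INP that disappears into a cancelled boundary subpath — and such a boundary subpath, being cancelled in the reduction of $f_+^s(\gamma)$, can occur at only two places along $\gamma'$ (its two ends) when $\gamma'$ is an interior subpath of $\gamma_0$, since in the middle any would-be-cancelled boundary subpath would have to survive (it is flanked on both sides by more of $\gamma'$). Each of the two end-cancellations can eat a uniformly bounded number of illegal turns. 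Hence $\ILT([f_+^s(\gamma')]) \geq \tfrac12(\ILT(\gamma') - \text{const})$ is too weak; one wants the stronger statement, so one should instead iterate the subdivision argument finely: subdivide $\gamma'$ into four blocks each carrying at least $\lfloor \ILT(\gamma')/4 \rfloor \geq 1$ illegal turns, and note that in at most one block can \emph{all} illegal turns be lost to end-cancellation (namely a block touching an end of $\gamma'$), so at least three blocks each retain $\geq 1$, giving $\ILT([f_+^s(\gamma')]) \geq 3 \geq$ — no, this also needs care with constants.

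The honest plan, then, is: (i) apply Lemma~\ref{contraction}; (ii) in the ``bad'' alternative, the only illegal turns of $\gamma'$ that survive into $[f_+^s(\gamma')]$ are those lying in the two cancelled boundary subpaths at the ends of $\gamma'$; (iii) bound the $\ILT$ of each such cancelled boundary subpath using bounded backtracking plus total $C$-illegality (the cancelled piece has length $\leq \mathrm{BBT}$, hence $\leq \mathrm{BBT}$ illegal turns); (iv) conclude $\ILT([f_+^s(\gamma')]) \leq 2\,\mathrm{BBT}$, and since $\ILT(\gamma') \geq 5$... — here the clean numerical conclusion $\leq \tfrac14 \ILT(\gamma')$ must follow by choosing the threshold $5$ (and possibly enlarging $s$) appropriately relative to the universal constant $\mathrm{BBT}$, perhaps by first replacing $s$ with a multiple so that the iteration amplifies the ratio. \textbf{The main obstacle} I anticipate is exactly this last bookkeeping: making the constant come out to $\tfrac14$ rather than just ``$\leq$ some fixed fraction,'' which likely forces one to iterate Lemma~\ref{contraction} $k$ times for a suitable $k$ (absorbing $k$ into $s$), so that the fixed additive loss $2\,\mathrm{BBT}$ becomes negligible against a geometrically-controlled retained count — or alternatively, to observe that after one application the surviving illegal turns are confined to two short end-segments, so a single further well-chosen subdivision suffices. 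I would set it up so that the threshold $\ILT(\gamma')\geq 5$ is precisely what guarantees the retained portion dominates the bounded end-losses by the required factor of four.
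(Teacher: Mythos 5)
Your proposal circles the right idea --- applying Lemma \ref{contraction} locally to a subdivision of $\gamma'$ rather than to $\gamma'$ as a whole --- but it never lands on a working version of it, and the concrete plan you finally commit to rests on a misreading of the lemma. In the second alternative of Lemma \ref{contraction}, the boundary subpath carrying an illegal turn is the part that \emph{disappears} under the global reduction; the lemma says nothing that confines the \emph{surviving} illegal turns of $[f_+^{s}(\gamma')]$ to two short end-segments, so your steps (ii)--(iv), culminating in ``$\ILT([f_+^{s}(\gamma')]) \leq 2\,\mathrm{BBT}$,'' are a non sequitur. Moreover the corollary bounds $\ILT([f_+^{s}(\gamma')])$ itself, before any cancellation against the rest of $\gamma$, so no bounded-backtracking estimate on cancelled pieces can produce it. You also explicitly flag the bookkeeping needed to reach the factor $\tfrac14$ as ``the main obstacle'' and leave it open (iterating $s$, adjusting the threshold, etc.), which is an admission that the argument is not closed.

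The missing step is the specific subdivision the paper uses: since $\ILT(\gamma')\geq 5$, one can write $\gamma'=\gamma_1\circ\dots\circ\gamma_q$ with cut points at illegal turns and $2\leq \ILT(\gamma_i)\leq 3$ for every $i$. Lemma \ref{contraction} applies to each block $\gamma_i$ separately (each has at least two illegal turns), and in either alternative the block loses at least one illegal turn: either $\ILT([f_+^{s}(\gamma_i)])<\ILT(\gamma_i)$ outright, or an illegal turn in a boundary subpath of $[f_+^{s}(\gamma_i)]$ is swallowed when the images of the blocks are concatenated and reduced. Writing $[f_+^{s}(\gamma')]=\gamma'_1\circ\dots\circ\gamma'_q$ for the leftover pieces, each satisfies $\ILT(\gamma'_i)\leq \ILT(\gamma_i)-1$, and summing over blocks carrying at most three illegal turns each yields the multiplicative bound. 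No bounded-backtracking estimate, no replacement of $s$ by a higher power, and no claim that losses are confined to the two ends of $\gamma'$ are needed; the whole point is that \emph{every} block, interior or boundary, loses a turn.
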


\begin{proof}
By the assumption $\ILT(\gamma') \geq 5$ we can subdivide $\gamma' = \gamma_1 \circ \ldots \circ \gamma_q$ as concatenation of subpaths $\gamma_i$, where each $\gamma_i$ satisfies $2 \leq \ILT(\gamma_i)  \leq 3$, and where at each concatenation vertex the path $\gamma'$ crosses over an illegal turn. We now apply Lemma \ref{contraction} to each of the subpaths $\gamma_i$ and obtain, for $s \geq 1$ as specified there, that either $\ILT([f_+^s(\gamma_i)] < \ILT(\gamma_i)$, or else one of the illegal turns in $[f_+^s(\gamma_i)]$ is cancelled when $f_+^s(\gamma')$ is reduced to $[f_+^s(\gamma')]$. Thus one obtains, for
$$[f_+^s(\gamma')] = \gamma'_1 \circ \ldots \circ \gamma'_q \, ,$$
where each $\gamma'_i $ is the (possibly trivial) subpath of $[f_+^s(\gamma_i)]$ which is left-over when the concatenation $[f_+^s(\gamma_1)] \circ \ldots \circ [f_+^s(\gamma_q]$ is reduced,
that each of the paths $\gamma'_i$ satisfies $\ILT(\gamma'_i)  \leq \ILT(\gamma_i) - 1$. This gives $\ILT([f_+^s(\gamma')] < \frac{1}{4} \ILT(\gamma')$, as claimed.
\end{proof}

\begin{prop} 
\label{length-contraction}
For any $\lambda > 1$ there is a constant $c > 0$ and an exponent $t \geq 0$ with the following property:

Consider $(X,Y) \in L^2(T_+) \subset \partial^2 \FN$, and let $\gamma = \gamma_{\tau_+}(X,Y)$ be the geodesic realization as reduced biinfinite path in $\tau_+$. Let $\gamma'$ be a finite subpath of $\gamma$ of simplicial length $L(\gamma') \geq c$.
Then the reduced image path $[f_+^t(\gamma')]$ satisfies:
$$L([f_+^t(\gamma')]) \leq \frac{1}{\lambda} L(\gamma')$$
\end{prop}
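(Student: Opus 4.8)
The plan is to combine the iterated-logarithm-style contraction from Corollary~\ref{strong-contraction} with the two-sided comparison between illegal-turn count and length that holds for totally $C$-illegal paths (Remark~\ref{inequalities}). First I would fix the constant $C \geq 1$ supplied by Lemma~\ref{legal-bounded}, so that for every $(X,Y) \in L^2(T_+)$ the geodesic realization $\gamma_{\tau_+}(X,Y)$ is totally $C$-illegal; moreover, since $L^2(T_+)$ is $\phi$-invariant (the geodesic realization of $(\partial\phi^t X, \partial\phi^t Y)$ is $[f_+^t(\gamma)]$), the whole forward orbit $\gamma_t = \gamma_{\tau_+}(\partial\phi^t X, \partial\phi^t Y)$ is totally $C$-illegal, which is exactly the hypothesis needed to invoke Lemma~\ref{contraction} and Corollary~\ref{strong-contraction}. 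Fix the exponent $s \geq 1$ from Lemma~\ref{contraction}; I also record the integer $s$ large enough that $L(f_+^s(e)) > C$ for every edge $e$, which is part of the conclusion of that lemma.

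Next I would set up the iteration. Let $\gamma'$ be a finite subpath of $\gamma = \gamma_{\tau_+}(X,Y)$ with $\ILT(\gamma') \geq 5$. By Corollary~\ref{strong-contraction}, $\ILT([f_+^s(\gamma')]) \leq \frac14 \ILT(\gamma')$. But $[f_+^s(\gamma')]$ is a subpath of $\gamma_s$ (up to the cancelled boundary pieces it is contained in $\gamma_s$), hence itself totally $C$-illegal, so Corollary~\ref{strong-contraction} applies again as long as the illegal-turn count stays $\geq 5$. Iterating $k$ times gives $\ILT([f_+^{ks}(\gamma')]) \leq (\tfrac14)^k \ILT(\gamma')$ while this quantity exceeds $5$; I would carry the iteration just until $\ILT([f_+^{ks}(\gamma')])$ first drops below, say, $20$ (a fixed bound independent of $\gamma'$), which happens after $k \leq \log_4(\ILT(\gamma'))$ steps. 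Then I convert back to length: using Remark~\ref{inequalities}, $L([f_+^{ks}(\gamma')]) \leq C(\ILT([f_+^{ks}(\gamma')]) + 1) \leq 21 C$, a constant depending only on $C$ (hence only on $\tau_+$ and $\phi$), while on the input side $L(\gamma') \leq C(\ILT(\gamma')+1) \leq 2C\,\ILT(\gamma')$ and $\ILT(\gamma') + 1 \leq L(\gamma')$.

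With these estimates in hand, the final step is bookkeeping to produce the claimed $t$ and $c$ for a given $\lambda > 1$. The point is that after $t = ks$ iterations the image length is bounded by the \emph{absolute} constant $21C$ regardless of how long $\gamma'$ was, so if $L(\gamma')$ is taken large enough — specifically $L(\gamma') \geq c := 21 C \lambda$ — we get $L([f_+^t(\gamma')]) \leq 21C \leq \tfrac1\lambda L(\gamma')$. The one subtlety is that $t = ks$ depends on $\gamma'$ through $k$, whereas the statement wants a single uniform $t$. This I would handle by noting that once $\ILT$ has dropped below $20$ it can only fail to keep shrinking when the relevant subpaths are (unions of disjoint) INP-subpaths — but a geodesic contains at most boundedly many disjoint copies of the unique non-closed INP $\eta$ of $\tau_+$ (Hypothesis~\ref{INP-hyp}), and in any case further iteration by $f_+$ still expands the legal stretches separating them, forcing eventual length contraction; alternatively one simply iterates a fixed further number of times $r_1$ (from Lemma~\ref{two-INPs}) to clean up the remaining two-INP configurations and absorbs everything into the constants. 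Choosing $t$ to be this uniform bound and enlarging $c$ accordingly finishes the proof. I expect the main obstacle to be exactly this last point — upgrading the $\gamma'$-dependent iteration count into a uniform exponent $t$ while keeping the length bound — since the naive iteration gives a logarithmic number of steps rather than a fixed one; the resolution is that the length (not the turn count) contracts by a definite factor as soon as it exceeds the absolute threshold $21C$, so a single well-chosen $t$ suffices.
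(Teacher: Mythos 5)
Your setup (Lemma \ref{legal-bounded} to get total $C$-illegality of the whole orbit, Corollary \ref{strong-contraction} to shrink $\ILT$, Remark \ref{inequalities} to convert back to length) is exactly the paper's strategy, but the way you organize the iteration creates a gap at the decisive step, and your proposed repair is circular. You run Corollary \ref{strong-contraction} a \emph{variable} number $k=k(\gamma')$ of times until $\ILT$ drops below a fixed threshold, concluding that the image length is eventually bounded by an absolute constant; you then need a uniform exponent $t$, and you close this by asserting that ``the length contracts by a definite factor as soon as it exceeds the absolute threshold $21C$, so a single well-chosen $t$ suffices.'' That assertion \emph{is} the proposition (with $\theta=1/\lambda$), and nothing you established implies it: a single application of $f_+^{s}$ only gives $L([f_+^{s}(\gamma')])\le \tfrac{C}{4}L(\gamma')+\mathrm{const}$, and $\tfrac{C}{4}$ may well exceed $1$, so there is no definite per-step contraction factor above any threshold. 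The correct resolution — and the one the paper uses — is to \emph{fix $k$ first}, depending only on $C$ and $\lambda$, so that $\tfrac{C}{4^{k}}<\tfrac{1}{\lambda}$, set $t=ks$, and then choose $c$ large enough that (i) $\ILT(\gamma')\ge L(\gamma')/C-1$ stays $\ge 5$ through all $k$ rounds of Corollary \ref{strong-contraction}, and (ii) the additive constants are absorbed by $(\tfrac1\lambda-\tfrac{C}{4^k})L(\gamma')$. With $k$ fixed in advance there is no uniformity problem to repair. (Your side remark that a geodesic contains only boundedly many disjoint copies of the INP is also false — a long leaf can cross over $\eta$ many times — but it is not needed.)

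A secondary, fixable gap: you apply Remark \ref{inequalities} directly to $[f_+^{ks}(\gamma')]$, but that remark requires the path to be totally $C$-illegal, and $[f_+^{ks}(\gamma')]$ is \emph{not} a subpath of $\gamma_{ks}$ — only its middle portion $[f_+^{ks}(\gamma')]_{\gamma_{ks}}$ is, after removing the two boundary pieces $\delta_0,\delta_1$ that are cancelled when $f_+^{ks}(\gamma)$ is reduced to $\gamma_{ks}$. These boundary pieces could a priori contain long legal segments. The paper's proof bounds $L(\delta_i)$ by a constant $K$ depending only on $f_+^{ks}$ (via the bounded-backtracking Remark \ref{bounded-back}) and carries a $+2K$ correction through the chain of inequalities; you need to do the same.
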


\begin{proof}
By Lemma \ref{legal-bounded} there exists a constant $C \geq 0$ such that for any $t \in \Z$ the geodesic realization 
$\gamma_t := \gamma_{\tau_+}(\phi^t(X),\phi^t(Y))$ is totally $C$-illegal. Hence, for $s \geq 0$ as given by Lemma \ref{contraction}, any finite subpath $\gamma'$ of $\gamma_0$ with sufficiently large $\ILT(\gamma')$
satisfies by Corollary \ref{strong-contraction} the inequality 
$$\ILT([f_+^{ks}(\gamma')]) \leq \frac{1}{4^k}\ILT(\gamma')$$
for some large $k \geq 1$. 

Now, one can decompose $[f_+^{ks}(\gamma')]$ as concatenation 
$[f_+^{ks}(\gamma')] = \delta_0 \circ [f_+^{ks}(\gamma')]_{\gamma_{ks}} \circ  \delta_1$, where $[f_+^{ks}(\gamma')]_{\gamma_{ks}}$ is the maximal subpath of $[f_+^{ks}(\gamma')]$ which is also a subpath of 
the biinfinite reduced path ${\gamma_{ks}}$, while $\delta_0$ and $\delta_1$ are boundary subpaths that are cancelled when $f_+^{ks}(\gamma_0)$ is reduced to $[f_+^{ks}(\gamma_0)] = \gamma_{ks}$. In particular, it follows that 
for each of the two $\delta_i$ there is a backtracking subpath $\delta'_i$ of $f_+^{ks}(\gamma_0)$, such that $\delta_i$ is obtained from a subpath of $\delta'_i$ by canceling (in the process of reducing the subpath $f_+^{ks}(\gamma')$ of $f_+^{ks}(\gamma_0)$ to $[f_+^{ks}(\gamma')]$) certain further backtracking subpaths. Hence (compare Remark \ref{bounded-back}) $L(\delta'_i)$ and thus  $L(\delta_i)$ is bounded above by a constant $K \geq 0$ which depends only on $f_+^{ks}$.  

As subpath of the totally $C$-illegal path $\gamma_{ks}$ the path $[f_+^{ks}(\gamma')]_{\gamma_{ks}}$ must be itself totally $C$-illegal.
Hence we deduce, using Remark \ref{inequalities}, that
$$\begin{array}[t]{rcl}
L([f_+^{ks}(\gamma')])
&\leq&L([f_+^{ks}(\gamma')]_{\gamma_{ks}}) + 2K \\
&\leq&C(\ILT([f_+^{ks}(\gamma')]_{\gamma_{ks}}) + 1) +2K \\
&\leq&C(\ILT([f_+^{ks}(\gamma')]) + 1) +2K\\
&\leq&C(\frac{1}{4^k}\ILT(\gamma')) + 1) + 2K \\
&\leq&C(\frac{1}{4^k}L(\gamma')) + 1) +2K
\end{array}$$
It is now easy 
to determine first $k$ and thus $t$ large enough (in dependence of the given value of $\lambda$) and then (using again the inequality $L(\gamma) \leq C(\ILT(\gamma) + 1)$ from Lemma \ref{inequalities}) a sufficiently large constant $c$ which give the desired inequality.
\end{proof}

\begin{rem}
\label{contracting-stable}
It follows easily from standard arguments of geometric group theory that the strong uniform contraction property of the automorphism $\phi$ along the leaves of the dual lamination $L^2(T_+)$ of its forward limit tree $T_+$, which is stated in Proposition \ref{length-contraction} above in terms of the train track representative $f_+$ of $\phi$, is in fact an intrinsic property, which can be similarly expressed with respect to any representative of $\phi$ as a self-map of some marked graph, or, more algebraically, directly for the automorphisms $\phi$ expressed as traditionally by means of a basis $\cal A$ of $\FN$ and the image words of the generators $a_i \in \cal A$. Any $\phi$-invariant subset of $\partial^2\FN$ which satisfies this property is called {\em uniformly contracting}.  

It is easy to see that, passing over to $\phi^{-1}$, there is a similar way to define the same sets by an analogous property as {\em uniformly $\phi^{-1}$-expanding}  subsets of $\partial^2\FN$.

\end{rem}

\section{steps 3 and 4}
\label{steps-three-and-four}

Throughout this section we assume that $f_-: \tau_- \to \tau_-$ is a train track map that represents the automorphism $\phi^{-1}$ of $\FN$. Since $\phi$ is assumed to be iwip, it follows (as immediate consequence of 
Definition \ref{defn:iwips}) that $\phi^{-1}$ is also iwip.
Just as in the previous section for $f_+: \tau_+ \to \tau_+$, we can assume here,
again after first having possibly replaced $\phi$ by a positive power:

\begin{hyp}
\label{INP-hyp-inv}
Up to inversion there is at most one INP $\eta$ in $\tau_-$, and the two endpoints of $\eta$ are distinct vertices in $\tau_-$.
\end{hyp}

We consider the lamination $L^2(T_+)$ from the previous section, and we shorten here the notation to $L^2_-:= L^2(T_+)$ 
(where the change of sign is natural in that $L^2_-$ contracts under iteration of $\phi$, while $T_+(\phi)$ expands). From 
Proposition \ref{length-contraction} and
Remark \ref{contracting-stable}
we 
obtain 
the following 
strong expansion property:

\begin{lem} 
\label{previous-steps}
For any $\lambda > 1$ there is a constant $c > 0$ and an exponent $t \geq 1$ such that for any pair $(X, Y) \in L^2_-$ and any subpath $\gamma'$ of the geodesic realization $\gamma = \gamma_{\tau_-}(X, Y)$ of $(X, Y)$ in $\tau_-$, with simplicial length $L(\gamma') \geq c$, one has:
$$L([f_-^t(\gamma')]) \geq \lambda L(\gamma')$$
\qed
\end{lem}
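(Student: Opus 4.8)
The plan is to transport the strong uniform contraction property established in Proposition~\ref{length-contraction} across the change of train track representative. The statement to be proved, Lemma~\ref{previous-steps}, is essentially the contrapositive (or ``inverse'') formulation of Proposition~\ref{length-contraction}: the latter says that $\phi$ contracts leaves of $L^2(T_+)$ when realized on $\tau_+$, and we want that $\phi^{-1}$ expands them when realized on $\tau_-$. There are two independent things to check: (i) that passing from $\phi$ to $\phi^{-1}$ turns ``contraction'' into ``expansion'', and (ii) that the marked graph used to measure lengths is irrelevant up to quasi-isometry, so that one may replace $\tau_+$ by $\tau_-$.

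Concretely, I would first invoke Remark~\ref{contracting-stable}, which already asserts that the contraction property of Proposition~\ref{length-contraction} is intrinsic: it can be expressed relative to \emph{any} marked graph representative of $\phi$, not just $\tau_+$, and it is equivalent to a corresponding ``uniformly $\phi^{-1}$-expanding'' property. This is the conceptual heart, and the justification is the standard fact that a homotopy equivalence $g: \tau_+ \to \tau_-$ realizing the change of markings lifts to a quasi-isometry $\tilde g: \tilde\tau_+ \to \tilde\tau_-$ (with respect to either the simplicial or the PF metrics, which are themselves quasi-isometric by Remark~\ref{quasi-isometry}(2)), and such a $\tilde g$ distorts the length of a geodesic segment only by a bounded multiplicative-plus-additive amount, with backtracking in $g(\gamma)$ bounded by Remark~\ref{bounded-back}. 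Thus the geodesic realization $\gamma_{\tau_-}(X,Y)$ of a leaf $(X,Y) \in L^2_-$ is, up to reparametrization by a quasi-isometry, the same biinfinite path as $\gamma_{\tau_+}(X,Y)$.

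With this in place, the argument runs as follows. Fix $\lambda > 1$. Choose $\lambda' > 1$ large enough (in terms of $\lambda$ and the quasi-isometry constants of $\tilde g$) and apply Proposition~\ref{length-contraction} with $\lambda'$ in place of $\lambda$ to obtain $c_0 > 0$ and $t \geq 0$ such that for any subpath $\delta$ of $\gamma_{\tau_+}(X,Y)$ with $L(\delta) \geq c_0$ one has $L([f_+^{t}(\delta)]) \leq \tfrac{1}{\lambda'} L(\delta)$. Now given a subpath $\gamma'$ of $\gamma_{\tau_-}(X,Y)$ with $L(\gamma')$ large, I would transfer it via $g^{-1}$ (more precisely, via the quasi-isometry and the equality $[f_-^t(\gamma_{\tau_-}(X,Y))] = \gamma_{\tau_-}(\partial\phi^{-t}(X),\partial\phi^{-t}(Y))$) to a subpath of $\gamma_{\tau_+}$ of the corresponding leaf; apply the $\phi$-contraction there; and translate back, noting that $f_-^t$ realizes $\phi^{-t}$ while $f_+^t$ realizes $\phi^{t}$, so the roles of source and target are swapped and contraction by $\tfrac{1}{\lambda'}$ becomes expansion by $\lambda'$. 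The quasi-isometry constants eat into the expansion factor, so choosing $\lambda'$ large enough and $c$ (the lower length bound in the lemma) large enough leaves the clean inequality $L(f_-^t(\gamma')) \geq \lambda L(\gamma')$; note also that on the $\tau_-$ side we may write $f_-^t(\gamma')$ rather than $[f_-^t(\gamma')]$ since $L(f_-^t(\gamma')) \geq L([f_-^t(\gamma')])$, which only helps.

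The main obstacle I anticipate is purely bookkeeping: tracking how the additive constants and the bounded-backtracking errors (from Remark~\ref{bounded-back} applied to both $g$ and the iterates of $f_\pm$) propagate, and verifying that for \emph{subpaths} of the geodesic realization — not the whole biinfinite leaf — the transferred path still has length above the threshold $c_0$ required by Proposition~\ref{length-contraction}, after one pays the quasi-isometry tax. This is why the lemma must be stated with an unspecified large constant $c$: one absorbs all these losses by taking $c$ large. There is no genuinely new idea needed beyond Remark~\ref{contracting-stable}; indeed, if one is willing simply to cite that remark, the proof of Lemma~\ref{previous-steps} is immediate, which is presumably why the excerpt closes the proof with \qed and no text.
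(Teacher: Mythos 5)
Your proposal is correct and follows exactly the paper's route: the paper gives no proof text at all for Lemma \ref{previous-steps}, deducing it directly from Remark \ref{contracting-stable} (i.e.\ from the intrinsic, representative-independent reformulation of Proposition \ref{length-contraction} and its equivalence with uniform $\phi^{-1}$-expansion), which is precisely what you invoke. The quasi-isometry bookkeeping you spell out is the content the paper leaves implicit in the phrase ``standard arguments of geometric group theory,'' and your handling of it is sound.
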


\smallskip

For any integer $K \geq 0$ we define the set $\cal L_K = \cal L_K(L^2_-, \tau_-)$ to be the set of subpaths $\gamma$ of length $L(\gamma) = K$ of any biinfinite path $\gamma_{\tau_-}(X, Y)$ in $\tau_-$ which is a geodesic realization any pair $(X, Y) \in L^2_-$. 

For any integer $C \geq 0$ and any path $\gamma$ in $\tau_-$ we denote by $\gamma\chop_C$ the (possibly trivial) subpath of $\gamma$ which is left after erasing from $\gamma$ the two boundary subpaths of simplicial length $C$. 

Recall from Remark \ref{bounded-back} that there is a constant $C(f_-) \geq 0$ which bounds the length of any backtracking subpath in the (unreduced) image $f_-(\gamma)$ of any reduced path $\gamma$ in $\tau_-$.

\begin{rem}
\label{boundary-chopped}
It follows that for any reduced path $\gamma$ in $\tau_-$ and any subpath $\gamma'$ of $\gamma$ the path $[f_-(\gamma')]\chop_{C(f_-)}$ is a subpath of the reduced path $[f_-(\gamma)]$. Note that this statement is not necessarily true if $[f_-(\gamma')]\chop_{C(f_-)}$ is replaced by the path $[f_-(\gamma')]$.
\end{rem}

\begin{lem}
\label{finite-paths}
There exists a constant $C_0 > 0$ such that, for any $C \geq C_0$ and for $t \geq 1$ as in Lemma \ref{previous-steps}, the following holds: 

For any path $\gamma \in \cal L_C$ there exists a path $\gamma' \in \cal L_C$ with the property that $\gamma$ is a subpath of $[f^t_-(\gamma')]\chop_{C(f_-)}$.
\end{lem}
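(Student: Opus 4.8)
The plan is to invert the expansion estimate from Lemma \ref{previous-steps}. Fix $\lambda > 1$, say $\lambda = 2$, and let $c > 0$ and $t \geq 1$ be the constants it provides, so that every subpath of a geodesic realization of a leaf of $L^2_-$ in $\tau_-$ of simplicial length $\geq c$ is stretched by a factor $\geq 2$ under $f_-^t$. Given $\gamma \in \cal L_C$, it is a subpath of some $\gamma_{\tau_-}(X,Y)$ with $(X,Y) \in L^2_-$. Since $L^2_-$ is $\phi$-invariant, the pair $(\partial\phi^{-t}(X), \partial\phi^{-t}(Y))$ also lies in $L^2_-$, and its geodesic realization $\gamma_{\tau_-}(\partial\phi^{-t}(X), \partial\phi^{-t}(Y))$ is mapped by $[f_-^t(\cdot)]$ onto $\gamma_{\tau_-}(X,Y)$ (using the compatibility $[f_-^t(\gamma_\tau(X,Y))] = \gamma_\tau(\partial\phi^{-t}(X),\partial\phi^{-t}(Y))$ reversed, recalling $f_-$ represents $\phi^{-1}$). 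So there is a leaf whose realization maps onto the one containing $\gamma$.

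The key step is a \emph{preimage-length} argument. First I would show that there is a subpath $\delta$ of the preimage realization $\gamma_{\tau_-}(\partial\phi^{-t}(X), \partial\phi^{-t}(Y))$ such that $[f_-^t(\delta)]$ contains $\gamma$; indeed $\gamma$ is covered by the $[f_-^t]$-image of the segment of the preimage realization lying ``above'' $\gamma$, and by the bounded-backtracking constant $C(f_-)$ (more precisely its iterate $C(f_-^t)$) one may take $\delta$ so that $\gamma$ lies in the \emph{interior} portion $[f_-^t(\delta)]\chop_{C(f_-)}$ — this is exactly the point of Remark \ref{boundary-chopped}, applied to $f_-^t$ in place of $f_-$, extending $\delta$ by a bounded amount on each side to absorb the boundary cancellation. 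Then I would bound $L(\delta)$ from above: decompose $\delta$ into subpaths of length roughly $c$; each is expanded by factor $\geq 2$ under $f_-^t$, hence $L([f_-^t(\delta)]) \geq 2 L(\delta) - (\text{cancellation})$, and since $\gamma \subset [f_-^t(\delta)]\chop_{C(f_-)}$ we get $L([f_-^t(\delta)]) \leq L(\gamma) + 2C(f_-) + K$ for $K$ bounded in terms of $f_-^t$; combining, $L(\delta) \leq \tfrac12(C + 2C(f_-) + K) + O(c)$. Provided $C \geq C_0$ for a suitable $C_0$ depending on $c$, $C(f_-)$, $K$, this yields $L(\delta) \leq C$.

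Finally I would truncate: $\delta$ has length $\leq C$, so take $\gamma'$ to be a subpath of the preimage realization of length exactly $C$ containing $\delta$ (possible since the realization is biinfinite). Then $\gamma' \in \cal L_C$ by definition of $\cal L_C$, and $[f_-^t(\delta)] \chop_{C(f_-)} \subseteq [f_-^t(\gamma')]\chop_{C(f_-)}$ since $\delta \subseteq \gamma'$ (again invoking Remark \ref{boundary-chopped} for the iterate, so that passing to a longer subpath only enlarges the chopped image), whence $\gamma \subseteq [f_-^t(\gamma')]\chop_{C(f_-)}$, as required.

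The main obstacle I anticipate is the careful bookkeeping in the second paragraph: making sure that the chopping constant $C(f_-)$ in the statement really suffices and that one does not need $C(f_-^t)$ instead — one must either observe that iterating the argument $t$ times and rescaling $\lambda$ appropriately lets one get away with $C(f_-)$ per step, or simply absorb the difference into the lower bound $C_0$ while noting the chopped-interior inclusion is monotone under enlarging the subpath. The expansion/contraction estimates themselves are robust; it is the interaction between ``reduced image,'' ``backtracking cancellation,'' and the ``$\chop_C$'' operation that needs the most care.
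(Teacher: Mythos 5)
Your proposal is correct and follows the same route as the paper, which simply declares the lemma ``a direct consequence'' of Lemma \ref{previous-steps}: you invert the uniform expansion estimate to bound the length of a preimage subpath by roughly $\tfrac{1}{\lambda}C$ plus constants depending on $f_-^t$, absorb those constants into $C_0$, and extend to a path of length exactly $C$. The only imprecision is the final claim that $\delta\subseteq\gamma'$ alone forces $[f_-^t(\delta)]\chop_{C(f_-)}\subseteq[f_-^t(\gamma')]\chop_{C(f_-)}$ (it does not in general, since $\delta$ could sit at an end of $\gamma'$), but the buffer you already built into the choice of $\delta$ — arranging that $[f_-^t(\delta)]$ covers a bounded neighborhood of $\gamma$ — repairs this, and the $C(f_-)$ versus $C(f_-^t)$ issue you flag is likewise harmlessly absorbed into $C_0$ or into replacing $f_-$ by $f_-^t$, as the paper itself does elsewhere.
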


\begin{proof}
This is a direct consequence 
of the above Lemma \ref{previous-steps} 
and of the $f_-$-invariance of $L^2_- = L^2(T_+(\phi))$ (which results directly from $[T_+(\phi)] \phi = [T_+(\phi)]$, see Proposition \ref{prop:LL} and Remark \ref{dual-lamination-projectivized}.
\end{proof}

Recall from Convention \ref{subdivision-INP} that we can assume without loss of generality 
that both endpoints of 
the INP $\eta$ in $\tau_-$ are vertices of $\tau_-$.

\begin{lem}
\label{at-most-1-illegal}
(a) For $C$ as in Lemma \ref{finite-paths} there is at most one illegal turn on any path $\gamma \in \cal L_C$. This illegal turn must be the tip of some 
INP which is contained as subpath in $\gamma$.

\smallskip
\noindent
(b)
Furthermore, if $\gamma \in \cal L_C$ is legal, then there is at most one unused turn on any path $\gamma \in \cal L_C$. If $\gamma$ contains an INP, then all turns outside the INP are used turns. (To be specific: At the initial and terminal point of the INP the path $\gamma$ may well cross over an unused (but legal) turn.)
\end{lem}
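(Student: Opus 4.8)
The plan is to derive both parts from the uniform expansion property in Lemma \ref{previous-steps} together with the structure of train track maps, exploiting that the elements of $\cal L_C$ sit in arbitrarily long geodesic realizations of leaves of $L^2_-$. First I would handle part (a). Suppose a path $\gamma \in \cal L_C$ has two illegal turns. Take a leaf $(X,Y) \in L^2_-$ whose geodesic realization $\gamma_{\tau_-}(X,Y)$ contains $\gamma$ as a subpath, and consider the whole $\phi$-orbit. By Lemma \ref{previous-steps} (applied to $\phi^{-1}$, i.e. expansion under $f_-$), any subpath of length $\geq c$ of a geodesic realization of a leaf of $L^2_-$ is stretched by a definite factor under $f_-^t$; iterating, a subpath of length $\geq c$ grows without bound, staying inside the geodesic realizations of the $\phi^{-t}$-translates of $(X,Y)$, which are all again in $L^2_-$. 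The key point is that the two illegal turns of $\gamma$ are either (i) both tips of INPs that are disjoint or meet only in a common endpoint, or (ii) not of this form — and in case (ii), by Proposition \ref{ILT-decrease} the number of illegal turns strictly drops under $f_-^r$, so after finitely many steps we would reach a legal path, contradicting that $\gamma$ sits inside a geodesic realization that is being stretched while keeping $\geq 2$ illegal turns (more carefully: one argues as in the proof of Lemma \ref{contraction}/Corollary \ref{strong-contraction} that the illegal-turn count along geodesic realizations of leaves of $L^2_-$ cannot decrease the way it would have to). In case (i), Hypothesis \ref{INP-hyp-inv} rules out meeting in a common endpoint (the unique INP $\eta$ in $\tau_-$ is not closed, so two copies of it along a geodesic cannot share an endpoint), and if they are disjoint then the legal subpath $\gamma''$ joining them is non-degenerate and hence expanded by $f_-$ to a legal subpath of length $> C$ inside some geodesic realization $\gamma_{\tau_-}(\partial\phi^{-t}X, \partial\phi^{-t}Y)$ — but that realization is totally $C$-illegal (Lemma \ref{legal-bounded} applied to $\phi^{-1}$), a contradiction. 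Hence $\gamma$ has at most one illegal turn; and if it has one, running the same expansion argument (Proposition \ref{quote}) forces that illegal turn to be the tip of an INP contained in $\gamma$, for otherwise $[f_-^{r_2}(\gamma)]$ is legal and then long legal segments appear inside totally $C$-illegal geodesic realizations, again a contradiction. Choosing $C_0$ large enough (at least $c$, and at least the length needed so that after the fixed number of iterations the relevant legal pieces exceed $C$) makes all of this go through; this fixes the value $C_0$.

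For part (b), I would argue similarly but with the dichotomy "used/unused" replacing "legal/illegal", using Remark \ref{rem-used}. Suppose first $\gamma$ is legal and has two unused turns. By Remark \ref{rem-used}(3) there is a bound $s$ after which an unused turn is either mapped to a used turn or is "permanently unused", i.e. all its $D^2f_-$-iterates are unused. A permanently unused legal turn cannot occur as a turn crossed by $\gamma_{\tau_-}(X,Y)$ for $(X,Y)\in L^2_-$: here I would invoke that $L^2_- = L^2(T_+)$ and, via Proposition \ref{alternative-characterizations}(3) together with the fact (to be established in the course of the paper) that leaves of $L^2(T_+)$ are realized by used legal paths on $\tau_-$ up to at most one singularity — but since this lemma is being proved as a step toward that statement, I would instead argue directly: a geodesic realization of a leaf of $L^2_-$, being stretched under $f_-^t$ with all its long subpaths staying inside geodesic realizations of leaves of $L^2_-$, consists (away from at most one illegal turn, by part (a)) of subpaths of the form $[f_-^k(e)]$ in the limit, hence crosses only used turns (compare the definition of $L^2_{BFH}(f_-)$ and Lemma \ref{BFH-used}). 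The cleanest route is: if $\gamma$ is legal and crosses an unused turn that stays unused forever, then $\gamma$ cannot be a subpath of $[f_-^k(e)]$ for any edge $e$ and any $k$; but by Lemma \ref{finite-paths} every path in $\cal L_C$ is a subpath of $[f_-^t(\gamma')]\chop_{C(f_-)}$ for some $\gamma'\in\cal L_C$, and iterating backwards this exhibits $\gamma$ (after $\chop$) inside $[f_-^{Nt}(\gamma'')]$ for arbitrarily large $N$ and $\gamma''\in\cal L_C$; since $L(\gamma'')=C$ is fixed and $f_-^{Nt}$ stretches, $[f_-^{Nt}(\gamma'')]$ has a huge legal sub-block which (by $\chop$ and part (a)) is a genuine subpath of an edge-image path, forcing $\gamma$'s turns other than a possible INP tip to be used. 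So at most one turn of a legal $\gamma\in\cal L_C$ is unused, and if $\gamma$ contains an INP the same argument applied to the (legal) complement of the INP gives that all turns outside $\eta$ are used, with the only possible exceptions at the two endpoints of $\eta$ where a legal-but-unused turn may be crossed.

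I expect the main obstacle to be making the "cannot decrease / cannot stay legal" expansion arguments precise without circularity: the statement about used turns is intimately tied to the later identification of $L^2_-$ with $L^2_{BFH}(f_-)$, so I must be careful to phrase part (b) using only (i) the totally $C$-illegal property (Lemma \ref{legal-bounded}), (ii) the uniform expansion (Lemma \ref{previous-steps}) and its consequence Lemma \ref{finite-paths}, (iii) the bounded-backtracking constant $C(f_-)$ and the $\chop_{C(f_-)}$ trick of Remark \ref{boundary-chopped}, and (iv) Remark \ref{rem-used}(3) on the eventual behaviour of unused turns — and not any statement about $L^2_{BFH}$ that has not yet been proved. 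The delicate bookkeeping is choosing $C_0$ uniformly: it must dominate $c$ from Lemma \ref{previous-steps}, the INP-branch-length bound from Remark \ref{finitely-many-INPs}, the constant $r$ from Proposition \ref{ILT-decrease}, $r_2$ from Proposition \ref{quote}, the "eventually unused" threshold $s$ from Remark \ref{rem-used}(3), and $C(f_-)$ — so that a single application of a fixed power of $f_-$ to a path of length $C\geq C_0$ both exceeds $C$ on legal sub-blocks and has exhausted the transient behaviour of all turns involved. Once those constants are assembled into $C_0$, parts (a) and (b) both follow from the same expansion-versus-totally-$C$-illegal tension.
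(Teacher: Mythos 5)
Your proposal correctly assembles the right ingredients (Lemma \ref{previous-steps}, Lemma \ref{finite-paths}, the $\chop_{C(f_-)}$ device, Proposition \ref{ILT-decrease}, Hypothesis \ref{INP-hyp-inv}, Remark \ref{rem-used}), but the contradiction you aim for does not exist, and this is a genuine gap rather than a presentational one. You want to conclude in part (a) that a non-degenerate legal gap between two INP-subpaths, once expanded by $f_-^t$, produces a long legal segment ``inside a totally $C$-illegal geodesic realization,'' citing ``Lemma \ref{legal-bounded} applied to $\phi^{-1}$.'' But Lemma \ref{legal-bounded} concerns realizations of leaves of $L^2(T_+)$ \emph{in $\tau_+$}; its $\phi^{-1}$-analogue concerns leaves of $L^2(T_-)$ in $\tau_-$. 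The paths at hand are realizations of leaves of $L^2_-=L^2(T_+)$ in $\tau_-$, and these are \emph{not} totally $C$-illegal --- on the contrary, Proposition \ref{L-is-almost-legal} (the goal of this whole section) says they are legal away from one singularity. For the same reason, in your case (ii) the observation that $[f_-^{kr}(\gamma)]$ eventually becomes legal is not a contradiction of anything: forward $f_-$-images of these paths are supposed to become long and legal. The tension that actually closes the argument runs in the \emph{backward} direction: by Lemma \ref{finite-paths} one builds a chain $\gamma_n\in\cal L_C$ ($n\le 0$) with $\gamma_n$ a subpath of $[f_-(\gamma_{n-1})]\chop_{C(f_-)}$; since $\ILT$ is non-increasing along the chain and bounded, it stabilizes for $n\ll 0$, whence Proposition \ref{ILT-decrease} (and Proposition \ref{quote} in the $\ILT=1$ case) forces every illegal turn of $\gamma_n$, hence of $\gamma_0$, to be the tip of an INP-subpath with adjacent INPs disjoint or sharing an endpoint. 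A non-degenerate legal gap between adjacent INPs in $\gamma_n$ is the $f_-$-image of the corresponding gap in $\gamma_{n-1}$, so going backward its length contracts geometrically; but it is an edge path between vertices (Convention \ref{subdivision-INP}) and so has length at least one edge unless degenerate --- contradiction. Hypothesis \ref{INP-hyp-inv} then excludes the shared-endpoint configuration.

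Part (b) has the same structural problem plus a circularity. Your first route (``a permanently unused legal turn cannot occur on a leaf of $L^2_-$'') proves too much and is false: exactly one such turn may occur, and that is the singularity the lemma is cataloguing. Your ``cleanest route'' asserts that a long legal sub-block of $[f_-^{Nt}(\gamma'')]$ is a subpath of some $f_-^k(e)$; that is essentially Corollary \ref{no-singularity}(a), which is deduced \emph{from} the present lemma, so you cannot invoke it here. The non-circular argument is again backward: an unused turn on $\gamma_0$ has unused preimages on every $\gamma_n$ (used turns map to used turns), and the maximal legal subpath between two adjacent unused turns on $\gamma_n$ is the $f_-$-image of the corresponding subpath of $\gamma_{n-1}$; since unused turns sit at vertices, the same shrinking-versus-discreteness argument forces this gap to be degenerate, i.e.\ there is at most one unused turn, and likewise at most the two endpoint turns of an INP can be unused.
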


\begin{proof}
(a)
For any path $\gamma =: \gamma_0 \in \cal L_C$ we can assume by Lemma \ref{finite-paths} that (after possibly replacing $f$ by a positive power $f^t$) there is an infinite family of paths $\gamma_{n} \in \cal L_C$, for any integer $n \leq 0$, such that 
each $\gamma_n$ is a subpath of $[f_-(\gamma_{n-1})]\chop_{C(\phi)}$. We note (compare Remark \ref{ILT-defn}) that $\ILT(\gamma_n) \geq \ILT(\gamma_m)$, for any $n \leq m \leq 0$. 

Since all $\gamma_n$ have length 
equal to $C$, it follows from Remark \ref{ILT-defn} that for sufficiently negative $n \leq m$ one has $\ILT(\gamma_{n}) = \ILT(\gamma_m)$. 
Thus it follows from Proposition \ref{ILT-decrease}, for $-m$ sufficiently large and any $r \in \N$, that every illegal turn in $\gamma_{m-r}$ 
is the tip of some INP which is entirely contained as subpath in $\gamma_{m-r}$, and that adjacent such INP-subpaths can only overlap in a common endpoint. The same statement must be true for all $f^s(\gamma_{m-r})$ with $s \geq 1$, and hence for all $\gamma_k$ with $k \geq m-r$, and thus also for $\gamma = \gamma_0$.

But the same argument applies to any of the $\gamma_n$.  
It follows that any maximal legal subpath $\gamma'$ of $\gamma_n$, between two adjacent INP-subpaths, is the $f_-$-image of the maximal legal subpath between adjacent INP-subpaths in $\gamma_{n-1}$. But the endpoints of INPs are vertices, so that such non-degenerate maximal legal subpaths can not become arbitrary small.
It follows from the assumption that $f_-$ is expanding that
each of the legal paths $\gamma'$ between two adjacent INP-subpaths of any $\gamma_n$ must has length 0. But by Hypothesis \ref{INP-hyp-inv} there is only one INP in $\tau_-$, and its endpoints are distinct. Thus no reduced path $\gamma$ in $\tau_-$ can contain two subsequent INPs which have a common endpoint.  This shows statement (a).

\medskip
\noindent
(b) 
We first assume that $\gamma$ (and hence any of the $\gamma_n$ as in part (a))  is legal. 
Recall from Remark \ref{rem-used}
(2) that used turns are mapped by $D^2 f_-$ to used turns. However, by Remark \ref{rem-used} (3), any unused legal turn must be mapped similarly to an  unused legal turn, as one can take its (necessarily unused legal) preimage 
via Lemma \ref{finite-paths}
arbitrarily far back.

From the last paragraph it follows that any maximal used legal subpath $\gamma'$ of any of the biinfinite legal paths 
$\gamma_n$, 
(i.e. $\gamma'$ crosses only over used turns but starts and ends at vertices with an unused legal turn on $\gamma_n$), is the $f'$-image of the maximal used legal subpath between adjacent unused legal turns in $\gamma_{n-1}$. But the unused turns do only occur at vertices, so that such non-degenerate maximal legal subpaths can not become arbitrary small.
It follows from the assumption that $f_-$ is expanding that
any of the above used legal paths $\gamma'$ between two adjacent unused turns of any $\gamma_n$ must have length 0; in other words:  there is only one such unused turn on any of the $\gamma_n$.

The same argument applies to the maximal legal subpath between an unused turn and the boundary point of an INP, in case some $\gamma_n$ would contain both. It follows again that this maximal legal subpath must have length 0, so that the only unused turns, on any $\gamma_n$ which contains an INP, can occur at the two endpoints of the INP.  This proves claim (b).
\end{proof}

As a direct consequence of Lemma \ref{at-most-1-illegal} we obtain:

\begin{prop}
\label{L-is-almost-legal}
(a)
For any $(X,Y) \in L^2_-$ there is on any of the geodesic realizations $\gamma := \gamma_{\tau_-}(X,Y)$ in $\tau_-$ at most one ``singularity'': This can either be an unused legal turn, or an illegal turn at the tip of an INP, or an INP with one or two unused legal turns at its boundary points.  The remainder of $\gamma$ is legal and crosses only over used turns.

\smallskip
\noindent
(b)
If $\gamma = \gamma_{\tau_-}(X,Y)$ contains a non-used legal turn, then so does every 
$\gamma_t = \gamma_{\tau_-}(\phi^t(X), \phi^t(Y)) \in L^2_-$, for any $t \in \Z$  (see Remark \ref{no-lift}).
The analogous statement holds if $\gamma$ crosses over an INP with $k$ unused legal turns at its boundary points, for $k \in \{0, 1, 2\}$.
\end{prop}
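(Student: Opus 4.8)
The plan is to deduce Proposition~\ref{L-is-almost-legal} from Lemma~\ref{at-most-1-illegal} by ``spreading out'' the information along the $\phi$-orbit of $(X,Y)$: Lemma~\ref{at-most-1-illegal} only controls subpaths of length $C$, so the real task is to forbid a bi-infinite leaf realisation from carrying two ``singular spots'' --- an illegal turn (the tip of an INP), or an unused legal turn --- that sit far apart (the configuration ``an INP with $k$ unused legal turns at its endpoints'' counting as a single spot).

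For part~(a), fix $(X,Y)\in L^2_-$ and put $\gamma=\gamma_{\tau_-}(X,Y)$. Every subpath of $\gamma$ of length $C$ lies in $\cal L_C$, so by Lemma~\ref{at-most-1-illegal} it carries at most one illegal turn (an INP tip) and, off such an INP, at most one unused legal turn. Hence it is enough to show that all illegal and unused-legal turns of $\gamma$ lie in one subpath of length $\leq C$; a final application of Lemma~\ref{at-most-1-illegal} to that window then produces exactly the three configurations of the statement, with $\gamma$ legal and crossing only used turns elsewhere. Suppose not. Since all INP-subpaths of $\tau_-$ have uniformly bounded length, $\gamma$ then has two singular spots at simplicial distance $>c$ along $\gamma$, and --- going outward from an illegal turn the path is legal (the tip of an INP being interior to it) --- the subpath $\delta$ of $\gamma$ running between the two spots, outside both, is a non-degenerate \emph{legal} path. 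Now use the $\phi$-invariance of $L^2_-$: for every positive multiple $n$ of the period of the unique INP $\eta\subset\tau_-$ (Hypothesis~\ref{INP-hyp-inv}), so that $[f_-^{n}(\eta)]=\eta$, the pair $(\partial\phi^{n}(X),\partial\phi^{n}(Y))$ again lies in $L^2_-$ with realisation $\gamma^{(n)}:=\gamma_{\tau_-}(\partial\phi^{n}(X),\partial\phi^{n}(Y))$ satisfying $[f_-^{n}(\gamma^{(n)})]=\gamma$ (recall $f_-$ represents $\phi^{-1}$). Because $D^2f_-$ carries illegal turns to illegal turns and used turns to used turns (Remarks~\ref{several-legal}(a), \ref{rem-used}(2)), and because $f_-$ creates no backtracking at a legal turn, the two spots of $\gamma$ are the $D^2f_-^{\,n}$-images of two spots of $\gamma^{(n)}$ of the same types, with the legal connector $\delta^{(n)}$ between them satisfying $f_-^{n}(\delta^{(n)})=\delta$ up to the bounded branches of $\eta$ at the ends. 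As $f_-^{n}$ scales the Perron--Frobenius length of a legal path by $\lambda_-^{\,n}$ (Remark~\ref{quasi-isometry}(1); compare Lemma~\ref{previous-steps}), $L^{PF}(\delta^{(n)})=\lambda_-^{-n}L^{PF}(\delta)\to0$; choosing $n$ so that $\delta^{(n)}$ has simplicial length strictly between $L(\eta)$ and $C$ --- possible after enlarging $C$, which is harmless since Lemma~\ref{at-most-1-illegal} holds for all large $C$ --- the two spots of $\gamma^{(n)}$ together with the bounded INP pieces between them fit inside a subpath of $\gamma^{(n)}$ of length $\leq C$, a member of $\cal L_C$ exhibiting either two illegal turns, or an illegal turn and a remote unused legal turn, or two unused legal turns not both at the endpoints of an INP --- each impossible by Lemma~\ref{at-most-1-illegal}.

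For part~(b) the same pull-back records the \emph{type} of the unique singularity of $\gamma_t:=\gamma_{\tau_-}(\partial\phi^{t}(X),\partial\phi^{t}(Y))$ along the orbit, where $[f_-(\gamma_{t+1})]=\gamma_t$. Two persistence statements do it. (i) An INP-subpath persists in both directions: forward because the turns of $\gamma_t$ at the endpoints of $\eta$ are legal (by~(a)) --- hence not cancelled under $D^2f_-$, the branches of $\eta$ being merely expanded --- so the illegal tip survives into $\gamma_{t-1}$; backward because reduction does not increase $\ILT$ (Remark~\ref{ILT-defn}), so $\gamma_{t+1}$ still carries an illegal turn, which by~(a) lies on $\eta$. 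Thus every $\gamma_t$ is legal once $\gamma$ is, and every $\gamma_t$ contains $\eta$ once $\gamma$ does. (ii) An unused legal turn carried by a leaf realisation is unused all along the orbit: by Remark~\ref{rem-used}(2) it pulls back to an unused legal turn, so every $\gamma_t$ with $t\geq0$ carries one; applying Remark~\ref{rem-used}(3) to its lift in $\gamma_{s}$ (with $s$ the constant of that remark), whose $D^2f_-^{\,s}$-image is the given turn of $\gamma_0$, forces all forward $D^2f_-$-iterates of that lift to be unused, so the turn is unused on $\gamma_t$ for every $t\leq s$ as well, hence for all $t$. Applying~(ii) to the endpoint turns of $\eta$ shows the used/unused status of each is constant in $t$. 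Combining: if $\gamma$ has a stand-alone unused legal turn, every $\gamma_t$ is legal and carries an unused legal turn; if $\gamma$ contains an INP with exactly $k$ unused legal turns at its endpoints, then so does every $\gamma_t$.

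The main obstacle is exactly the passage from ``length $C$'' (Lemma~\ref{at-most-1-illegal}) to the whole bi-infinite leaf in part~(a): a priori a leaf might carry infinitely many widely spaced singular spots. What breaks this is the observation that the portion of $\gamma$ lying strictly between two spots is \emph{legal}, so that pulling back along the $\phi$-orbit contracts it by the exact factor $\lambda_-$ per step in the Perron--Frobenius metric while the INP pieces and the unused-turn data travel along with uniformly bounded size --- the same mechanism that already powers Lemma~\ref{at-most-1-illegal}. The one further delicate point, handled by Remark~\ref{rem-used}, is that ``unused'' here must be read as ``unused with unused preimages arbitrarily far back'', which is automatic for singular turns on leaves of $L^2_-$ and is what makes unused-ness stable in both directions along the orbit.
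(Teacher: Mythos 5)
Your proof is correct, and your part (b) coincides with the paper's argument: the INP persists forward because its image reduces to an INP and the legal turns at its endpoints prevent cancellation into the rest of the leaf, it persists backward because reduction cannot increase $\ILT$, and unused-ness of a legal turn on a leaf is stable in both directions exactly via Remark \ref{rem-used} (2) and (3) as you use them. For part (a), however, you do considerably more work than the paper, whose entire proof is the observation you make only in passing: Lemma \ref{at-most-1-illegal} holds for \emph{every} $C\ge C_0$, and any two putative singularities on the fixed bi-infinite geodesic $\gamma$ sit at some finite simplicial distance $d$ from one another, so choosing $C$ larger than $d$ plus the (uniformly bounded) lengths of the INPs involved places both singularities inside a single subpath of $\gamma$ of length $C$; that subpath lies in $\cal L_C$ and contradicts Lemma \ref{at-most-1-illegal} directly. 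Your pull-back argument --- contracting the legal connector $\delta$ between the two spots along the backward $\phi$-orbit until they fit into a window of length $C$ --- is therefore redundant, and as written it rests on an unverified sub-claim: that each singular spot of $\gamma=[f_-^{\,n}(\gamma^{(n)})]$ is an honest $D^2f_-^{\,n}$-image of a singular spot of $\gamma^{(n)}$ of the same type. This is not automatic, since a turn of the \emph{reduced} image can be created at the end of a cancelled backtracking subpath rather than inherited from a turn of $\gamma^{(n)}$. Since you explicitly permit yourself to enlarge $C$, you already hold the complete (and much shorter) proof; the orbit-contraction machinery in part (a) can simply be deleted.
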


\begin{proof}
(a)
It suffices to choose $C$ in Lemma \ref{at-most-1-illegal} large enough, so that the two singularities would be both contained in a path of $\cal L_C$, in contradiction to this lemma.

\smallskip
\noindent
(b)
This is a direct consequence of part (a), of the fact that the image of an INP is (after reduction) again an INP, and of the fact (explained in the first paragraph of the proof of part (b) of Lemma \ref{at-most-1-illegal}) that under the map $D^2 f_-$ used turns are mapped to used turns, and any unused legal turn on $\gamma_t$ is mapped to an unused legal turn on $\gamma_{t+1}$.
\end{proof}

In the following we call a (possibly infinite or biinfinite) path $\gamma$ {\em used legal} if it is legal and if it only crosses over used turns.
Recall that such $\gamma$ satisfy $f(\gamma) = [f(\gamma)]$, and that the latter is again used legal.

\begin{cor}
\label{no-singularity}
(a)
For any $(X, Y) \in L^2_-$ every finite used legal subpath $\gamma'$ of the geodesic realization $\gamma_{\tau_-}(X,Y)$ in $\tau_-$ is also contained as subpath in 
$f_-^t(e)$, for some $t \geq 0$ and any edge $e$ of $\tau_-$.

\smallskip
\noindent
(b)
Assume that for $(X,Y) \in L^2_-$
the geodesic realization $\gamma_{\tau_-}(X, Y)$ 
is a used legal path.  Then $(X,Y)$ belongs to the BFH-attracting lamination $L^2_{BFH}(f_-)$ of $f_-:\tau_- \to \tau_-$ (see Definition \ref{BFH-lamination}).
\end{cor}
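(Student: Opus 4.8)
Part~(b) will be immediate from part~(a): if $\gamma_{\tau_-}(X,Y)$ is used legal then so is every finite subpath of it, hence by~(a) a subpath of some $f_-^t(e)$, and by Definition~\ref{BFH-lamination} (and the notion of a lamination generated by a family of paths) this says exactly that $(X,Y)\in L^2_{BFH}(f_-)$. So the work is in~(a), and the plan there is to ``pull $\gamma'$ back'' along $f_-$ until it is visibly the image of a single edge. First I would fix $C:=\max\{C_0,L(\gamma')\}$ with $C_0$ as in Lemma~\ref{finite-paths}, and choose the exponent $t$ of Lemma~\ref{previous-steps}--Lemma~\ref{finite-paths} large enough that in addition $\Lambda^t>CM/m$, where $\Lambda>1$ is the Perron--Frobenius eigenvalue of $f_-$ and $m,M$ are the smallest and largest PF edge lengths of $\tau_-$ (see \S\ref{limit-tree}; this is possible since $\Lambda>1$ and arbitrarily large $t$ are admissible). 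By Proposition~\ref{L-is-almost-legal}(a) the biinfinite realization $\gamma_{\tau_-}(X,Y)$ has at most one ``singularity'', so the used legal path $\gamma'$ lies in a (half-infinite) maximal used legal sub-arc of it and can be enlarged to a used legal path $B_0\in\mathcal{L}_C$ of simplicial length exactly $C$.

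\medskip

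The core step is an inductive construction of bounded ``ancestors''. Given a used legal $B_k\in\mathcal{L}_C$, Lemma~\ref{finite-paths} (together with the $\chop$-bookkeeping of Remark~\ref{boundary-chopped}) produces $\delta_{k+1}\in\mathcal{L}_C$ with $B_k\subseteq[f_-^t(\delta_{k+1})]$; let $A_{k+1}\subseteq\delta_{k+1}$ have the fewest edges among sub-stretches with $B_k\subseteq[f_-^t(A_{k+1})]$, and let $B_{k+1}\in\mathcal{L}_C$ be a used legal length-$C$ extension of $A_{k+1}$ (possible for the same reason as for $B_0$). Then I claim each $A_{k+1}$ is used legal: if it crossed an unused legal turn, or the tip of the unique INP of $\tau_-$ (Hypothesis~\ref{INP-hyp-inv}), then $D^2f_-$ would carry that turn forward to an unused legal turn, resp.\ to a segment carrying an illegal turn (cf.\ Remark~\ref{rem-used} and the proof of Lemma~\ref{at-most-1-illegal}), so the used legal path $B_k\subseteq[f_-^t(A_{k+1})]$ would have to avoid it and hence lie in the $[f_-^t(\cdot)]$-image of a proper sub-stretch of $A_{k+1}$, contradicting minimality. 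Consequently $[f_-^t(A_{k+1})]=f_-^t(a_1)\cdots f_-^t(a_p)$ is a reduced legal concatenation of edge images, and minimality forces $B_k$ to reach into $f_-^t(a_1)$ and $f_-^t(a_p)$, hence to contain every intermediate block in full; if $p\ge 3$ this yields $CM\ge L^{PF}(B_k)\ge L^{PF}(f_-^t(a_2))=\Lambda^t L^{PF}(a_2)\ge\Lambda^t m$, contradicting the choice of $t$. Therefore each $A_k$ (for $k\ge 1$) has at most two edges.

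\medskip

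To finish: since all paths in sight are used legal, applying $f_-^t$ introduces no cancellation, so from $B_k\subseteq f_-^t(A_{k+1})$, $A_k\subseteq B_k$ and $A_0:=B_0\supseteq\gamma'$ one gets $\gamma'\subseteq f_-^{kt}(A_k)$ for every $k\ge 1$. Fix such a $k$. If $A_k$ is a single edge $e$, then $\gamma'\subseteq f_-^{kt}(e)$. If $A_k=e_1e_2$ has two edges, its unique turn is used (as $A_k$ is used legal), so by Definition~\ref{six} there are an edge $e''$ and an exponent $t'\ge 1$ with $e_1e_2\subseteq f_-^{t'}(e'')$, whence $\gamma'\subseteq f_-^{kt}(A_k)\subseteq f_-^{kt}\bigl(f_-^{t'}(e'')\bigr)=f_-^{kt+t'}(e'')$. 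In either case $\gamma'$ is a subpath of $f_-^s(e)$ for some edge $e$ and some $s\ge 0$, which proves~(a); and (b) follows as indicated above.

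\medskip

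\noindent\emph{Expected main obstacle.} The delicate part is the inductive step of the second paragraph: verifying that the minimal ancestors $A_k$ genuinely stay used legal — rather than absorbing the lone INP of $\tau_-$ or an unused legal turn flanking it — and correctly controlling the bounded cancellation (the $\chop$ bookkeeping of Remark~\ref{boundary-chopped}) produced by each application of $f_-^t$ followed by reduction, so that the inclusions $B_k\subseteq[f_-^t(A_{k+1})]$ and the passage to $[f_-^t(A_{k+1})]=f_-^t(a_1)\cdots f_-^t(a_p)$ hold literally. Once this is set up, the metric estimate forcing $L(A_k)\le 2$ and the final push-forward are routine.
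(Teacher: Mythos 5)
Your argument is correct and is essentially the paper's own proof in expanded form: the paper also pulls $\gamma'$ backwards (via Proposition \ref{L-is-almost-legal}(b) and the expansiveness of $f_-$) until the ancestor runs over at most two adjacent edges, notes that the turn between them must be used because used turns have used preimages along the leaf, and then invokes Definition \ref{six}. Your inductive bookkeeping with the minimal ancestors $A_k$ and the Perron--Frobenius length estimate simply makes precise the ``iterate backwards until at most two edges'' step that the paper leaves as a sketch.
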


\begin{proof}
(a)
From the expansiveness of $f_-$ and the legality of  $\gamma'$ we can use Proposition \ref{L-is-almost-legal} (b) to ``iterate'' $f_-$ backwards until we find a path $\gamma''$ which runs over at most two adjacent edges $e, e'$ in $\tau_-$ such that $f_-^t(\gamma'')$ contains $\gamma'$ as subpath, for some $t \geq 0$. But since all turns of $\gamma'$ are used, the same must be true for all of its preimages, so that the turn between $e$ and $e'$ must be used. From Definition \ref{six} it follows directly that this proves the claim (a).

\smallskip
\noindent
(b)
This is a direct consequence of part (a), by the definition of $L^2_{BFH}(f_-)$ in Definition \ref{BFH-lamination}.
\end{proof}

Summing up the results of this section we can state that we know so far that every leaf of $L^2_-$ either belongs to $L^2_{BFH}(f_-)$, or else contains precisely one ``singularity'' as specified in Proposition \ref{L-is-almost-legal} (a).  The goal of the next section will be to show that such ``singular leaves'' belong to the diagonal closure of $L^2_{BFH}(f_-)$, before then concluding with the proof of our main result stated in the Introduction.

\section{steps 5 - 7}
\label{steps-five-to-seven}

Throughout this section we use the same conventions as in section \ref{steps-three-and-four}, in particular Hypothesis \ref{INP-hyp-inv} and the notation from the paragraph preceding it.

\begin{lem}
\label{eigenrays-0}
Let $(\rho_t)_{t \in \Z}$ be a family of infinite reduced rays in $\tau_-$,
and assume that $[f_-(\rho_t)] = \rho_{t+1}$ for all $t \in \Z$. Assume furthermore that each $\rho_t$ is legal, and that each $\rho_t$ starts at a vertex $v_t$ of $\tau_-$.
Then each $\rho_t$ is an eigenray for the train track map $f_-: \tau_- \to \tau_-$.
\end{lem}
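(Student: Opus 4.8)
The plan is to exploit the fact that there are only finitely many vertices in $\tau_-$ and only finitely many legal rays of any bounded initial length, which will force periodicity of the data $(\rho_t)$, and then to upgrade periodicity of initial segments to an honest eigenray via Lemma \ref{eigen-path}. First I would fix $t_0 \in \Z$ and look at the ``germ'' of $\rho_{t_0}$: since $\rho_{t_0}$ is legal and starts at a vertex $v_{t_0}$, its initial edge $e_{t_0}$ determines, together with $v_{t_0}$, a gate at $v_{t_0}$ (Definition \ref{gates}), and by Remark \ref{gates1} the map $f_-$ sends the gate at $v_t$ to a gate at $v_{t+1}=f_-(v_t)$, with $Df_-$ sending the initial edge of $\rho_t$ to the initial edge of $[f_-(\rho_t)]=\rho_{t+1}$ (no cancellation occurs at the start because $v_t$ is a vertex and $\rho_t$ is legal, so $f_-(\rho_t)$ is already locally injective at its initial point). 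Because $\tau_-$ has finitely many vertices and finitely many oriented edges, the sequence of pairs $(v_t, e_t)_{t\le t_0}$ (initial vertex and initial edge of $\rho_t$) takes only finitely many values, so there exist $t_1<t_0$ and $k\ge 1$ with $(v_{t_1},e_{t_1})=(v_{t_1-k},e_{t_1-k})$; iterating the relation $(v_{t+1},e_{t+1})=(f_-(v_t),Df_-(e_t))$ then gives $(v_{t_0},e_{t_0})=(v_{t_0-k},e_{t_0-k})$ as well, i.e. the germ of $\rho_{t_0}$ is $k$-periodic under the backward dynamics.

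Next I would promote this to periodicity of the whole rays. Set $t_2 := t_0 - k$. From $[f_-(\rho_t)]=\rho_{t+1}$ one gets by induction $[f_-^k(\rho_{t_2})] = \rho_{t_2+k} = \rho_{t_0}$; since both $\rho_{t_2}$ and $\rho_{t_0}$ are legal rays starting at the \emph{same} vertex (by the germ computation $v_{t_0}=v_{t_2}$) with the \emph{same} initial edge, and since $f_-^k(\rho_{t_2})$ is already reduced except possibly for backtracking at its start---which cannot happen, again because $v_{t_2}$ is a vertex and $\rho_{t_2}$ is legal---we in fact have $f_-^k(\rho_{t_2}) = [f_-^k(\rho_{t_2})] = \rho_{t_0}$ as unreduced paths. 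Here I would be slightly careful: a priori $[f_-^k(\rho_{t_2})]$ could differ from the \emph{unreduced} $f_-^k(\rho_{t_2})$ if the legal path $\rho_{t_2}$ had an image with an internal backtracking, but by Definition \ref{legal-paths}(3) a train track map sends legal paths to legal (hence reduced) paths, so $f_-^k(\rho_{t_2})=\rho_{t_0}$ on the nose. Now take any finite initial subpath $\gamma$ of $\rho_{t_0}=\rho_{t_2+k}$ long enough that $f_-^k(\gamma)$ properly contains $\gamma$: such $\gamma$ exists because $f_-$ is expanding (Definition \ref{legal-paths}(3), Definition \ref{expanding-and-D}), so legal initial segments grow under $f_-^k$ while still starting with the same germ. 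Applying Lemma \ref{eigen-path} to this $\gamma$ and the exponent $k$ yields a unique eigenray with initial subpath $\gamma$; but $\rho_{t_0}$ is also a ray with initial subpath $\gamma$ satisfying $f_-^k(\rho_{t_0})=\rho_{t_0+k}$, and since $(v_t,e_t)$ is $k$-periodic for \emph{all} $t$ (the argument above was uniform in the choice of $t_0$, or alternatively one reindexes), one gets $f_-^k(\rho_{t_0}) = \rho_{t_0}$, so $\rho_{t_0}$ \emph{is} the eigenray. As $t_0$ was arbitrary, every $\rho_t$ is an eigenray.

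The main obstacle I anticipate is the bookkeeping at step two: making sure that the backward-periodicity of the germ, combined with $[f_-(\rho_t)]=\rho_{t+1}$, really forces $f_-^k(\rho_{t_0})=\rho_{t_0}$ \emph{as an identity of paths} (not merely up to reduction, and not merely for one index $t_0$). The subtle point is that $[f_-^k(\rho_{t_0})]=\rho_{t_0+k}$ is automatic, but one needs $\rho_{t_0+k}=\rho_{t_0}$, which is exactly where the finiteness/pigeonhole argument on $(v_t,e_t)$ is used---and one must verify that equality of germs $(v_{t_0},e_{t_0})=(v_{t_0+k},e_{t_0+k})$ propagates to equality of the full legal rays. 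This last implication is clean: two legal rays with the same initial vertex and same initial edge that are both ``backward-iterates of the same future'' agree, because at each vertex along a legal ray the continuation is determined by the condition of legality together with the constraint imposed by $[f_-(\rho_t)]=\rho_{t+1}$; more simply, $f_-^k(\rho_{t_0})$ and $\rho_{t_0}$ are both legal rays from $v_{t_0}$ whose common image under $f_-$-iteration is $\rho_{t_0+mk}$ for all $m$, and a legal ray is determined by its (arbitrarily long) initial segments, which coincide by expansiveness. I would write this last step out carefully, as it is the only place where something could slip.
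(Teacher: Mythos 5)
Your overall strategy is the same as the paper's: use finiteness of $\tau_-$ to make the initial vertex $v_t$ and initial edge $e_t$ of $\rho_t$ periodic under $(f_-,Df_-)$, then upgrade this germ-periodicity to periodicity of the whole rays via expansiveness and the fact that the family is indexed by all of $\Z$. The first two stages of your argument are correct (in particular the observation that $[f_-(\rho_t)]=f_-(\rho_t)$ because a train track map sends legal paths to legal, hence reduced, paths, and the pigeonhole giving $Df_-^k(e_{t_0})=e_{t_0}$). The gap is exactly where you flagged it: the passage from $(v_{t_0+k},e_{t_0+k})=(v_{t_0},e_{t_0})$ to $\rho_{t_0+k}=\rho_{t_0}$, and neither of the two justifications you offer closes it. The claim that ``at each vertex along a legal ray the continuation is determined by the condition of legality'' is false: at a vertex with three or more gates there are several legal continuations, so a legal ray is not determined by its germ. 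The second justification is circular: asserting that $\rho_{t_0}$ and $f_-^k(\rho_{t_0})=\rho_{t_0+k}$ have ``common image $\rho_{t_0+mk}$'' under iteration already presupposes $\rho_{t_0+mk}=\rho_{t_0+(m+1)k}$, which is what is to be proved.

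The correct way to close this --- and it is precisely what the paper does --- is to use the backward half of the family, which your write-up has not exploited at this point. For every $m\ge 0$ one has $\rho_{t_0}=f_-^{mk}(\rho_{t_0-mk})$, and by germ-periodicity $\rho_{t_0-mk}$ starts with the edge $e_{t_0}$; since all paths involved are legal (no reduction), $\rho_{t_0}$ therefore has $f_-^{mk}(e_{t_0})$ as an initial subpath. The same applies to $\rho_{t_0+k}=f_-^{mk}(\rho_{t_0+k-mk})$. Hence $\rho_{t_0}$ and $\rho_{t_0+k}$ share the initial segment $f_-^{mk}(e_{t_0})$ for every $m$, and $L(f_-^{mk}(e_{t_0}))\to\infty$ by expansiveness, so $\rho_{t_0}=\rho_{t_0+k}=f_-^k(\rho_{t_0})$, which is the defining equation of an eigenray. (Equivalently: the eigenray produced by Lemma \ref{eigen-path} from $\gamma=e_{t_0}$ is $\bigcup_{m}f_-^{mk}(e_{t_0})$, and the above shows this union is an initial part of, hence equal to, $\rho_{t_0}$.) The paper phrases this as an induction on the length of the maximal common initial subpath of the subfamily $(\rho_{t_0+jk})_{j\in\Z}$. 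With this one step written out, your proof coincides with the paper's.
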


\begin{proof}
Since each $\rho_t$ is legal, one has always $f_-(\rho_t) = [f_-(\rho_t)] = \rho_{t+1}$. It follows that $f_-$ maps the starting point $v_t$ of $\rho_t$ to the starting point $v_{t+1}$ of $\rho_{t+1}$.
Since the starting point of each $\rho_t$ is (by hypothesis) a vertex, it follows from the finiteness of the vertex set of $\tau_-$ that each $v_t$ is a periodic vertex of $\tau_-$. 

Similarly, since all $\rho_t$ are legal, 
the initial edge $e_{t+1}$ of $\rho_{t+1}$ must always be equal to the initial edge of the (legal) edge path $f_-(e_t)$, where $e_t$ denotes the initial edge of $\rho_t$.
Again by the finiteness of the graph $\tau_-$, we see that for any $t \in \Z$ the initial edge $e_t$ of $\rho_t$ lies on a periodic orbit, with respect to the map $D f_-$ (see Definition \ref{expanding-and-D} (b)).
In other words: each $e_t$ is the eigen edge of the gate to which it belongs (compare subsection \ref{rays}).

We now argue by induction: 
Suppose that all $\rho_{t+k}$ for some fixed integer $k \neq 0$ start with the same initial subpath $\gamma_n$ of simplicial length $n \geq 1$. Then it follows from the expansiveness of $f_-$ (and from the fact that all paths are legal) that their images have a common initial subpath of simplicial length $\geq n+1$. Thus the assumption $f_-^k(\rho_t) = \rho_{t+k}$ implies that the maximal common initial subpath of the $\rho_{t+k}$ must have infinite length, so that we obtain $\rho_t =\rho_{t+k}$ for all $t \in \Z$. But this is the defining equality for eigenrays.
${}^{}$
\end{proof}

\begin{lem}
\label{prolongation+}
(a) For each eigenray $\rho$ which starts at a vertex of $\tau_-$ there is an eigenray $\rho'$, with same initial vertex, such that $\bar \rho \circ \rho'$ is a biinfinite used legal path.

\smallskip
\noindent
(b)
Any path of type $\bar \rho \circ \rho'$, with used legal turn at the concatenation point, realizes a pair $(X, Y) \in 
\partial^2 \FN$ which belongs to the BFH-attracting lamination $L^2_{BFH}(f_-)$:
$$
\bar \rho \circ \rho' = \gamma_{\tau_-}(X, Y) \quad {\rm with} \quad (X, Y) \in L^2_{BFH}(f_-)$$

\end{lem}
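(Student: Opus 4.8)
The plan is to derive (a) from the prolongation Lemma~\ref{prolongation} together with the gate/eigenray dictionary (Proposition~\ref{gates-eigenrays}, Corollary~\ref{eigen-bijection}) and Lemma~\ref{used-gate-turns}, and to derive (b) directly from the generating description of $L^2_{BFH}(f_-)$ in Definition~\ref{BFH-lamination} by pushing the concatenation turn forward under iterates of $f_-$.

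For part (a): let $v$ be the (necessarily $f_-$-periodic) initial vertex of the given eigenray $\rho$ and let $a$ be its initial edge, which by Proposition~\ref{gates-eigenrays} is the eigen edge of the gate $\frak g$ at $v$ from which $\rho$ emanates. First I would apply Lemma~\ref{prolongation} to $a$, producing an edge $b'$ at $v$ for which the turn $(a,b')$ is used and legal; legality forces $b'$ to lie in some gate $\frak g'\ne\frak g$ at $v$ (Definition~\ref{legal-paths}(2)). Let $\rho'$ be the unique eigenray emanating from $\frak g'$ (Proposition~\ref{gates-eigenrays}), with initial (eigen) edge $b$. Applying Lemma~\ref{used-gate-turns} to the gates $\frak g,\frak g'$ and the used turn $(a,b')$ then shows the turn $(a,b)$ between the two eigen edges is used, and it is legal because $a,b$ lie in distinct gates. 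Since $\rho$ and $\rho'$ are legal paths (Definition~\ref{eigenrays}(b)) and every turn crossed by an eigenray is used (arbitrarily long initial segments of an eigenray have the form $f_-^m(e)$ for an edge $e$, and every turn of such a segment is used by Definition~\ref{six}), the concatenation $\bar\rho\circ\rho'$ is reduced and every turn it crosses is used and legal, i.e.\ it is a biinfinite used legal path. This proves (a).

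For part (b): let $\rho,\rho'$ be as in the statement, with initial edges $a,b$ at their common initial vertex $v$ and with $(a,b)$ used and legal, so that $\bar\rho\circ\rho'$ is reduced and realizes a well-defined $(X,Y)\in\partial^2\FN$. By Definition~\ref{BFH-lamination} it is enough to show that every finite subpath of $\bar\rho\circ\rho'$ is a subpath of $f_-^m(c)$ for some edge $c$ of $\tau_-$ and some $m\ge0$. Pick a single exponent $t\ge1$ that is simultaneously a period for $v$, for the eigen edges $a$ and $b$ under $Df_-$, and for $\rho$ and $\rho'$; since $a,b$ are eigen edges and $f_-^t(\rho)=\rho$, $f_-^t(\rho')=\rho'$, the legal paths $f_-^{kt}(a)$ and $f_-^{kt}(b)$ are initial segments of $\rho$ and of $\rho'$ whose lengths tend to infinity with $k$ (using that $f_-$ is expanding). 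As $(a,b)$ is used there are an edge $c$ and an $s\ge1$ with the legal path $f_-^s(c)$ containing $\bar a\circ b$ as a subpath; applying $f_-^{kt}$, under which no cancellation occurs since everything is legal, we get that $f_-^{s+kt}(c)$ contains $\overline{f_-^{kt}(a)}\circ f_-^{kt}(b)$, which is precisely the subpath of $\bar\rho\circ\rho'$ spanned by a neighbourhood of $v$ whose radius grows with $k$. Every prescribed finite subpath of $\bar\rho\circ\rho'$ lies in such a neighbourhood once $k$ is large enough, hence in $f_-^{s+kt}(c)$; therefore $(X,Y)\in L^2_{BFH}(f_-)$.

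The main obstacle, given the machinery already in place, is the bookkeeping in part (b): one must check that after iterating by $f_-^{kt}$ the concatenation point of $\overline{f_-^{kt}(a)}\circ f_-^{kt}(b)$ is still exactly the turn $(a,b)$ --- so that this path is reduced, legal, and really is a subpath of $\bar\rho\circ\rho'$ --- and that $f_-^{kt}(a),f_-^{kt}(b)$ are genuinely growing \emph{initial} segments of $\rho,\rho'$ rather than merely legal paths. Both points follow from $Df_-^t(a)=a$ and $Df_-^t(b)=b$ together with the defining equalities $f_-^t(\rho)=\rho$, $f_-^t(\rho')=\rho'$ and expansiveness of $f_-$; the rest is a routine application of the definition of the BFH-attracting lamination.
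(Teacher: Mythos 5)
Your proposal is correct and follows essentially the same route as the paper: part (a) via Lemma \ref{prolongation}, Proposition \ref{gates-eigenrays} and Lemma \ref{used-gate-turns}, and part (b) by using the used turn $(a,b)$ to find an edge $c$ with $f_-^s(c)\supset \bar a\circ b$ and then iterating $f_-$ (legally, hence without cancellation) to capture arbitrarily large neighbourhoods of the concatenation point, which by Definition \ref{BFH-lamination} places $(X,Y)$ in $L^2_{BFH}(f_-)$. Your write-up is in fact somewhat more explicit than the paper's about why $f_-^{kt}(a)$ and $f_-^{kt}(b)$ remain initial segments of $\rho$ and $\rho'$, but the underlying argument is identical.
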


\begin{proof}
(a)
We first derive from Lemma \ref{prolongation} that there is an edge $e$ with same initial vertex $v$ as $\rho$ such that $\bar \rho \circ e$ is used legal. In particular $e$ doesn't lie in the gate from which $\rho$ starts. By Proposition \ref{gates-eigenrays} there is an eigenray $\rho'$ which starts from the gate that contains $e$, and by Lemma \ref{used-gate-turns} the turn from $\bar \rho$ to $\rho'$ is used. Furthermore, every eigenray is used legal, since every finite subpath is contained in some $f_-$-iterate of its initial edge (compare Proposition \ref{gates-eigenrays} and its proof). This proves claim (a).

\smallskip
\noindent
(b)
By 
hypothesis 
the turn at the concatenation point between $\bar \rho$ and $\rho'$ is used: There is an edge $e$ of $\tau_-$ and an exponent $t \geq 1$ such that the edge path $f^t_-(e)$ contains a subpath 
$\gamma$ which 
is a concatenation of non-trivial initial segments of both, $\bar \rho$ and $\rho'$. It follows from the expansiveness of $f_-$ and the definition of an eigenray that every 
finite subpath of $\bar \rho \circ \rho'$ is also a subpath of some $f$-iterate of $\gamma$, and hence of
$f^{nt}_-(e)$ for some integer $n \geq 0$. By Definiton \ref{BFH-lamination} it follows that $(X,Y)$ is an element $L^2_{BFH}(f_-)$.
\end{proof}

Recall 
that for any $(X, Y) \in \partial^2 \FN$ the geodesic realization in $\tau_-$ satisfies $[f_-(\gamma_{\tau_-}(X, Y))] = \gamma_{\tau_-}(\phi^{-1} (X), \phi^{-1}(Y))$,
see Remark \ref{no-lift}.
As in section \ref{steps-three-and-four} we 
will use below 
for the lamination dual to the forward limit tree $T_+$ the abbreviation $L^2_- = L^2(T_+)$.

\begin{prop}
\label{eigenrays-1}
Let $(X, Y) \in \partial^2\FN$, and let $(\gamma_t)_{t \in \Z}$ be the family of biinfinite reduced paths $\gamma_t = \gamma_{\tau_-}( \phi^{-t} (X),  \phi^{-t}(Y))$ which realize in $\tau_-$ the 
$\phi$-orbit of $(X, Y)$.
In particular we assume that $[f_-(\gamma_t)] = \gamma_{t+1}$ for all $t \in \Z$. 

\smallskip
\noindent
(a)
Assume that each $\gamma_t$ is legal, and that on each $\gamma_t$ there is precisely one unused legal turn.
Then there are 
eigenrays $\rho_t$ and $\rho'_t$, such that $\gamma_t = \bar \rho_t \circ \rho'_t$, where the unused turn is situated at the concatenation point. 

\smallskip
\noindent
(b)
Assume that on each $\gamma_t$ there is precisely one INP subpath $\eta_t$, and that every turn of $\gamma_t$ is legal except for the turn at the tip of $\eta_t$.
Then there are 
eigenrays $\rho_t$ and $\rho'_t$, such that $\gamma_t = \bar \rho_t \circ \eta_t\circ \rho'_t$. 
\end{prop}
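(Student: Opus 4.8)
The plan is to treat parts (a) and (b) uniformly: each $\gamma_t$ has exactly one ``special'' location — the unused legal turn in (a), the INP subpath $\eta_t$ in (b) — and cutting $\gamma_t$ there produces two rays. I would show that these two families of rays satisfy the hypotheses of Lemma \ref{eigenrays-0}, and conclude that all of them are eigenrays.

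First I would set up the decomposition. In case (a) write $\gamma_t = \bar\rho_t \circ \rho'_t$, where $\rho_t$ and $\rho'_t$ are the two rays emanating from the vertex $v_t$ at which $\gamma_t$ crosses its unique unused turn. In case (b) write $\gamma_t = \bar\rho_t \circ \eta_t \circ \rho'_t$, where $\rho_t$ starts at the initial vertex $p_t$ of $\eta_t$ and $\rho'_t$ at its terminal vertex $q_t$; by Convention \ref{subdivision-INP} these are genuine vertices. In both cases $\rho_t$ and $\rho'_t$ are legal by hypothesis (everything off the singularity is legal), so $f_-(\rho_t)$ and $f_-(\rho'_t)$ are legal, hence reduced.

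The key step, which I expect to be the main technical point, is to show the decomposition is $f_-$-equivariant, i.e. $\rho_{t+1} = f_-(\rho_t)$ and $\rho'_{t+1} = f_-(\rho'_t)$, equivalently that reducing $f_-(\gamma_t)$ to $\gamma_{t+1}$ causes no cancellation across the special location. I would use two observations: every turn occurring inside an edge path $f_-(e)$ is legal (since $f_-$ is a train track map) and in fact used (it is crossed by $f_-(e)=f_-^1(e)$, so it is used by Definition \ref{six}); and used turns are sent to used turns by $D^2f_-$ (Remark \ref{rem-used}(2)). In case (a), $\gamma_t$ is legal, so $\gamma_{t+1} = f_-(\gamma_t)$ with no reduction; its only possibly-unused turn is the image of the unique unused turn of $\gamma_t$ (all new turns are used, and images of used turns are used), so the cut point transports: $v_{t+1} = f_-(v_t)$ and $\gamma_{t+1} = \overline{f_-(\rho_t)} \circ f_-(\rho'_t)$. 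In case (b), the only backtracking when $f_-(\gamma_t)$ is reduced occurs at the tip of $\eta_t$; since $[f_-(\eta_t)]$ is again an INP and $\tau_-$ has a unique INP $\eta$ whose endpoints are fixed by $f_-$ (after possibly replacing $f_-$ by $f_-^2$, which does not affect the conclusion), the reduction of $f_-(\eta_t)$ to $\eta$ does not reach its endpoints, and the (legal) turns of $\gamma_t$ at $p_t$, $q_t$ have non-degenerate images, so no cancellation spills over; hence $\gamma_{t+1} = \overline{f_-(\rho_t)} \circ \eta \circ f_-(\rho'_t)$, giving $\rho_{t+1} = f_-(\rho_t)$ and $\rho'_{t+1} = f_-(\rho'_t)$.

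Finally, each of the families $(\rho_t)_{t\in\Z}$ and $(\rho'_t)_{t\in\Z}$ consists of legal rays starting at vertices and satisfying $[f_-(\rho_t)] = \rho_{t+1}$, so Lemma \ref{eigenrays-0} shows that every $\rho_t$ and every $\rho'_t$ is an eigenray of $f_-$; in particular $\rho := \rho_0$ and $\rho' := \rho'_0$ are eigenrays, and (since there are only finitely many eigenrays, the families are periodic in $t$) one obtains the stated decompositions $\gamma_t = \bar\rho \circ \rho'$ in case (a) and $\gamma_t = \bar\rho \circ \eta \circ \rho'$ in case (b). Apart from the care needed in case (b) to confine the reduction of $f_-(\gamma_t)$ to the tip of the INP without eroding its endpoints, every step is a direct appeal to the lemmas above.
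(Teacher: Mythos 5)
Your proposal is correct and follows essentially the same route as the paper: identify the singular vertices (the unused turn in (a), the INP endpoints via Convention \ref{subdivision-INP} in (b)), check that $f_-$ carries them to the singular vertices of $\gamma_{t+1}$ because no reduction crosses the special location, and then invoke Lemma \ref{eigenrays-0} on the two resulting families of legal rays. The only difference is that you spell out in more detail why the cut points transport (new turns in $f_-(e)$ are used, images of used turns are used, the INP reduction is confined to its tip), which the paper asserts more tersely.
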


\begin{proof}
Note that in case (a) each unused turn in $\gamma_t$ takes place at a vertex $v_0^t$ of $\tau_-$. Similarly, in case (b) the endpoints $v_1^t$ and $v_2^t$ of any INP in $\gamma_t$ are vertices, by Convention \ref{subdivision-INP}. 
Notice also that in case (a) one has $[f_-(\gamma_t)] = f_-(\gamma_t)$, and in case (b) the only backtracking subpath of $f_-(\gamma_t)$ is situated at the tip of $\eta_{t+1}$. 
Hence
$f_-$ maps the ``singular'' vertices $v_i^t$ in $\gamma_t$ to the singular vertices $v_i^{t+1}$ in $\gamma_{t+1}$. Thus we obtain the statements of (a) and (b) is a direct consequence of Lemma \ref{eigenrays-0}.
\end{proof}

\begin{defn}
\label{equivalence-relation}
For any two eigenrays $\rho, \rho'$ in $\tau_-$  we write
$$\rho \sim \rho'$$
if $\rho$ and $\rho'$ have as initial points a common vertex $v$ of $\tau_-$, if $v$ is $f_-$-periodic, and 
if in the biinfinite path $\bar \rho \circ \rho'$ the turn at the concatenation point is used. 
Thus the relation $\sim$ generates an
equivalence relation (also denoted by $\sim$) on the finite set of eigenrays in $\tau_-$ which start at periodic vertices of $\tau_-$.
\end{defn}

It follows directly 
from the definitions that the equivalence relation $\sim$ is modeled precisely after the definition of the diagonal extension (see Definition-Remark \ref{diagonal-closure}), so that one observes:

\begin{rem}
\label{diagonal-equivalence}
Let $v$ be a periodic vertex of $\tau_-$, and assume that the BFH-attracting lamination $L^2_{BFH}(f_-)$ contains pairs $(X_1, Y_1)$ and $(X_2, Y_2)$ 
with $X_1 \neq X_2$. Assume that the geodesic realizations $\gamma_1 = \gamma_{\tau_-}(X_1, Y_1)$ and $\gamma_2 = \gamma_{\tau_-}(X_2, Y_2)$ are both concatenations of eigenrays, $\gamma_1 = \bar \rho_1 \circ \rho'_1$ and $\gamma_2 = \bar \rho_2 \circ \rho'_2$, with concatenation vertices both equal to $v$.  Then $\rho_1 \sim\rho_2$ implies that $(X_1, X_2)$ is contained in the diagonal extension of $L^2_{BFH}(f_-)$.

Note that the converse implication (not used here) is also true, but it is a little bit less immediate: one needs to employ Lemma \ref{eigenrays-0} and Lemma \ref{prolongation+} (b).
\end{rem}

We will now use the equivalence relation $\sim$ to derive from $\tau_-$ and $f_-$ a new graph $\tau_-^*$ and a new self-map $f_-^*$ 
(compare in this context \cite{Lu2}). This will be defined in several steps as follows:

We assume that at some periodic vertex $v$ of $\tau_-$ there is more than one $\sim$-equivalence class of eigenrays. We first assume that $v$ is fixed by $f_-$.

We then split the vertex into several new vertices, called {\em subvertices} of $v$, precisely one for each equivalence class, such that all eigenrays starting at the same subvertex are equivalent, and conversely. 

We now attach any other edge $e$ of $\tau_-$ with initial vertex $v$ to one of the new subvertices, according to which eigenray started in $\tau_-$ from the same gate as $e$. 
Recall from Proposition \ref{gates-eigenrays} that for each gate $\frak g_i$ at a periodic vertex there is precisely one eigenray starting from $\frak g_i$.

We connect the $k \geq 2$ subvertices of $v$ by a $k$-pod $\cal P$ to obtain a new graph which transforms back to $\tau_-$, if $\cal P$ is contracted to a single point. 

We now consider any vertex $v'$ of $\tau_-$ which is mapped eventually by $f_-$ to $v$, and we do the analogous construction there, where the subdivision of $v'$ into subvertices is simply lifted from $v$ to $v'$. (Indeed, since $v'$ is not periodic, we can not use again the relation $\sim$ for the subdivision of $v'$.) The graph obtained 
by this {\em blow-up} of vertices is denoted by $\tau_-^*$ 
and we denote by $\cal P^*$ the union of all $k$-pods introduced at $v$ and at its preimage vertices.

We observe that the train track map $f_-$ induces naturally a map $f_-^*: \tau_-^* \to \tau_-^*$ which maps $\cal P^*$ 
to itself and leaves also $\tau_-^* \smallsetminus \overset{\circ}{\cal P^*}$ invariant. Since $f_-$ is expanding, it follows that $\tau_-^* \smallsetminus \overset{\circ}{\cal P}$ has non-trivial fundamental group. Hence $\pi_1(\tau_-^* \smallsetminus \overset{\circ}{\cal P})$ is a non-trivial proper $\phi$-invariant free factor of $\FN$, contradicting the assumption that $f_-$ represents an iwip automorphisms.

\smallskip

Now, if $v$ is not fixed by $f_-$, one has to do the same blow-up procedure at the whole $f_-$-orbit of $v$, but the argument remains in this case precisely the same as in the simpler case just considered.

This shows:

\begin{lem}
\label{only-one-equi-class}
At every periodic vertex $v$ of $\tau_-$ there is only one $\sim$-equivalence class of eigenrays.
\qed
\end{lem}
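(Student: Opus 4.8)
The plan is to argue by contradiction: from a periodic vertex carrying two distinct $\sim$-classes of eigenrays I would manufacture a proper $\phi^{-1}$-invariant free factor of $\FN$, which is impossible since $\phi^{-1}$ is again iwip. \emph{Step 1 (equivariance of $\sim$).} First I would record that the relation $\sim$ is compatible with $f_-$. By Corollary \ref{eigen-bijection} (via the induced bijection on gates, Remark \ref{gates1}) the map $f_-$ carries the eigenrays at a periodic vertex $v$ bijectively onto those at $f_-(v)$, and by Remark \ref{rem-used}(2) the map $D^2f_-$ sends used turns to used turns; hence if $\bar\rho\circ\rho'$ crosses a used turn, then so does its reduced image, which is the concatenation of $\overline{[f_-(\rho)]}$ with $[f_-(\rho')]$. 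Consequently $f_-$ induces a bijection between the $\sim$-classes at $v$ and those at $f_-(v)$, and in particular the number $k$ of $\sim$-classes is constant along the (finite) $f_-$-orbit of $v$.

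\emph{Step 2 (the blow-up).} Assuming $k\ge 2$, I would build $\tau_-^*$ exactly as in the discussion preceding the statement: split each vertex of the $f_-$-orbit of $v$ into $k$ subvertices, one per $\sim$-class; reattach every edge $e$ at such a vertex to the subvertex labelled by the class of the unique eigenray issuing from the gate containing $e$ (which exists and is unique by Proposition \ref{gates-eigenrays}); join the $k$ subvertices by a $k$-pod $\cal P$, so that collapsing $\cal P$ recovers $\tau_-$; and repeat the same splitting, pulled back along the relevant iterates of $f_-$, at every vertex that is eventually $f_-$-mapped into the orbit of $v$ (this last splitting is forced rather than intrinsic, since such a vertex is not periodic). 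By Step 1 the resulting partition into subvertices is $f_-$-equivariant, so $f_-$ induces a map $f_-^*\colon\tau_-^*\to\tau_-^*$ which maps $\cal P$ homeomorphically onto itself, leaves $Y:=\tau_-^*\smallsetminus\overset{\circ}{\cal P}$ invariant, and descends to $f_-$ under the collapse $\tau_-^*\to\tau_-$.

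\emph{Step 3 (the contradiction).} The collapse restricts to an edge-injective, $\pi_1$-injective map $Y\to\tau_-$ which intertwines $f_-^{*}|_Y$ with $f_-$; since $f_-$ is expanding (indeed a train track map, so legal lengths grow by $\lambda$, Remark \ref{quasi-isometry}), each edge $e$ of $Y$ has $[f_-^{*t}(e)]$ of unbounded simplicial length. As these long reduced paths lie in $Y$, no component of $Y$ can be a tree, so $\pi_1(Y)\ne 1$; and because the blow-up strictly increased the number of vertices while keeping the edge set, the rank of each component of $Y$ is strictly less than $N$. Hence the fundamental group of a suitable $f_-^*$-invariant component of $Y$ is a nontrivial proper free factor of $\FN$ whose conjugacy class is preserved by a positive power of $\phi^{-1}$, contradicting that $\phi^{-1}$ (and hence $\phi$) is iwip. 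If $v$ is periodic but not fixed, nothing changes: one blows up the whole $f_-$-orbit of $v$ at once, using the $f_-$-equivariant labelling from Step 1.

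The step I expect to be the main obstacle is the bookkeeping in Steps 2--3 that makes the blow-up genuinely well defined with quotient equal to $\tau_-$: one must verify that reattaching edges via gates is consistent with $\sim$ (so that $f_-^*$ really does preserve both $\cal P$ and $Y$, and that collapsing $\cal P$ intertwines $f_-^*$ with $f_-$), and that the relevant $f_-^*$-invariant piece of $Y$ is connected and carries a proper, nontrivial free factor. The cleanest route to the last point is through the collapse map together with expansiveness of $f_-$, in the spirit of the proof of Lemma \ref{invariant-subgraph}.
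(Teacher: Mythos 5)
Your proposal is correct and follows essentially the same route as the paper: the paper's own proof is precisely the blow-up construction described in the discussion preceding the lemma, producing a $k$-pod $\cal P$ whose complement is an $f_-^*$-invariant subgraph with nontrivial fundamental group, yielding a proper invariant free factor that contradicts the iwip hypothesis. Your Steps 1 and 3 merely make explicit two points the paper leaves implicit (the $f_-$-equivariance of $\sim$ along the orbit of $v$, and why the resulting free factor is proper), which is a welcome but not substantively different elaboration.
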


\begin{prop}
\label{every-singular-leaf-is-diagonal}
Let $(X,Y) \in L^2_-$ be such that its geodesic realization $\gamma_{\tau_-}(X,Y)$ in $\tau_-$ is not used legal.  Then $(X, Y)$ belongs to the diagonal extension of the BFH-attracting sublamination $L_{BFH}^2(f_-)$ of the train track map $f_-$.
\end{prop}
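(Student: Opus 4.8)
The plan is to analyze the shape of the geodesic realization $\gamma = \gamma_{\tau_-}(X,Y)$ for a ``singular'' leaf $(X,Y) \in L^2_-$, using the structural results established in the preceding steps. By Proposition \ref{L-is-almost-legal}(a), since $\gamma$ is not used legal, it has exactly one singularity, which is one of: (i) an unused legal turn, or (ii) an illegal turn at the tip of an INP, possibly with one or two unused legal turns at the boundary points of that INP. In each case I want to show $\gamma$ is (essentially) a concatenation $\bar\rho \circ \rho'$ or $\bar\rho \circ \eta \circ \rho'$ of eigenrays, and then invoke Lemma \ref{only-one-equi-class} together with Remark \ref{diagonal-equivalence} to conclude that $(X,Y)$ lies in the diagonal closure of $L^2_{BFH}(f_-)$.

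First I would set up the family $(\gamma_t)_{t \in \Z}$ with $\gamma_t = \gamma_{\tau_-}(\partial\phi^{-t}(X), \partial\phi^{-t}(Y))$, so that $[f_-(\gamma_t)] = \gamma_{t+1}$, using the functoriality of geodesic realization recalled before Proposition \ref{eigenrays-1}. By Proposition \ref{L-is-almost-legal}(b), the ``type'' of singularity is preserved along the whole orbit: every $\gamma_t$ has exactly one singularity of the same kind. Then I apply Proposition \ref{eigenrays-1}: in case (i) (no INP, one unused legal turn) part (a) gives eigenrays $\rho, \rho'$ with $\gamma_t = \bar\rho \circ \rho'$; in case (ii) part (b) gives eigenrays $\rho, \rho'$ with $\gamma_t = \bar\rho \circ \eta \circ \rho'$ where $\eta$ is the unique INP of $\tau_-$. (The sub-cases of case (ii) with one or two unused legal turns at the INP boundary are handled by combining the reasoning of (a) applied at those turns with (b) applied at the tip — the branches of $\eta$ are themselves initial segments of eigenrays by Remark \ref{train-track-maps}(\ref{two}), so the resulting decomposition is again a concatenation of two genuine eigenrays meeting at the relevant periodic vertex.)

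Now I invoke Lemma \ref{only-one-equi-class}: at the periodic concatenation vertex $v$ there is only one $\sim$-equivalence class of eigenrays. By Lemma \ref{prolongation+}, there exists an eigenray $\rho_0$ at $v$ such that $\bar\rho \circ \rho_0$ is used legal — hence it realizes a pair in $L^2_{BFH}(f_-)$ — and likewise $\bar\rho_0 \circ \rho'$ realizes a pair in $L^2_{BFH}(f_-)$. Since $\rho \sim \rho_0$ and $\rho_0 \sim \rho'$ (all eigenrays at $v$ being $\sim$-equivalent by Lemma \ref{only-one-equi-class}), Remark \ref{diagonal-equivalence} shows that the pair $(X,Y)$ realized by $\bar\rho \circ \rho'$ lies in the diagonal closure of $L^2_{BFH}(f_-)$: indeed $(X,Y)$ is obtained by ``diagonally concatenating'' the endpoint $\partial\theta\inv$-data of $\bar\rho \circ \rho_0$ and $\bar\rho_0 \circ \rho'$ across the common intermediate ray $\rho_0$. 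In the INP case, the branches of $\eta$ being initial segments of eigenrays means $\bar\rho \circ \eta \circ \rho'$ is obtained from the eigenray-data at the two endpoint vertices of $\eta$ by the same diagonal-concatenation mechanism, so the same conclusion applies.

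I expect the main obstacle to be the careful bookkeeping in case (ii) when the geodesic crosses an INP with unused legal turns at its boundary: one must check that after decomposing $\gamma$ at \emph{all} its ``features'' (the unused turn(s) and the INP tip), the pieces are genuinely eigenrays emanating from periodic vertices — not merely eventually-periodic or legal-but-non-periodic rays — so that Lemma \ref{only-one-equi-class} can legitimately be applied at each relevant vertex. This is where Remark \ref{train-track-maps}(\ref{two}) (branches of an INP are eigenray initial segments) and the periodicity of INP endpoints (Convention \ref{subdivision-INP}) do the essential work, together with Lemma \ref{eigenrays-0} to promote the orbit-stable legal rays to genuine eigenrays. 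Once that structural identification is secured, the passage to the diagonal closure via Remark \ref{diagonal-equivalence} and Lemma \ref{prolongation+}(b) is routine.
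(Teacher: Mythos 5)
Your proposal is correct and follows essentially the same route as the paper's own proof: decompose $\gamma$ via Proposition \ref{L-is-almost-legal} and Proposition \ref{eigenrays-1} into a concatenation $\bar\rho\circ\rho'$ or $\bar\rho\circ\eta\circ\rho'$ of eigenrays, then conclude via Lemma \ref{prolongation+}, Lemma \ref{only-one-equi-class} and Remark \ref{diagonal-equivalence}, reducing the INP case to the unused-turn case through Remark \ref{train-track-maps}. The only cosmetic difference is that you use a single intermediate eigenray $\rho_0$ where the paper uses two (one for each of $\rho$ and $\rho'$), which is harmless since Lemma \ref{only-one-equi-class} makes all eigenrays at the vertex $\sim$-equivalent anyway.
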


\begin{proof}
From  
Proposition \ref{L-is-almost-legal} 
and the hypothesis that $(X, Y) \in L^2_-$ but $\gamma := \gamma_{\tau_-}(X,Y)$ not used legal
we know that $\gamma$
(and hence any geodesic realization $\gamma_t
= \gamma_{\tau_-}(\phi^t(X),\phi^t(Y))$, for  arbitrary $t \in \Z$) contains either precisely one non-used legal turn, or else it runs precisely once over an INP-subpath  $\eta$ of $\tau_-$. Thus one deduces from Proposition \ref{eigenrays-1} that $\gamma = \bar \rho \circ \rho'$ or $\gamma = \bar \rho \circ \eta\circ \rho'$, for eigenrays $\rho$ and $\rho'$. 

In the first case the unused turn occurs at the concatenation vertex $v$ of the two eigenrays, and it follows directly from Lemma \ref{prolongation+} that there are eigenrays $\rho_1$ and $\rho_2$ at $v$ such that the biinfinite paths $\gamma_1 = \bar \rho \circ \rho_1$ and $\gamma_2 = \bar \rho' \circ \rho_2$ are used legal, and that they are geodesic realizations $\gamma_1 = \gamma_{\tau_-}(X, Z_1)$ and $\gamma_2 = \gamma_{\tau_-}(Y, Z_2)$ of pairs $(X, Z_1)$ and $(Y, Z_2)$ that both belong to to the BFH-attracting lamination $L^2_{BFH}(f_-)$. Hence it follows directly from Lemma \ref{only-one-equi-class} and Remark \ref{diagonal-equivalence} that $(X, Y)$ belongs to the diagonal extension of $L^2_{BFH}(f_-)$.

In the second case, where $\gamma = \bar \rho \circ \eta\circ \rho'$, we consider the eigenrays $\rho_1$ and $\rho_2$ which have the two legal branches of the INP $\eta$ as initial subpaths and agree along an infinite legal subray, see Remark \ref{train-track-maps}. By the case treated before, the biinfinite paths $\bar \rho \circ \rho_1$ and $\bar \rho_2 \circ \rho'$ realize pairs $(X,Z)$ and $(Z, Y)$ which belong to the diagonal extension of $L^2_{BFH}$. Thus $(X,Y)$ also belongs to this diagonal extension.
\end{proof}

We have now assembled all ingredients necessary to prove the main result of this paper:

\begin{proof}[Proof of Theorem \ref{main-result2}]
We first note that we may well replace $\phi$ by any of its positive powers, since for any integer $k \geq 1$ we have both, $T_+(\phi^k) = T_+(\phi)$ (see Remark \ref{powers-trees}) and $L^2_{BFH}(f_-^k) = L^2_{BFH}(f_-)$  (see Proposition \ref{from-BFH} (3)). Hence we can apply our 
previous 
results from sections \ref{steps-one-and-two}, \ref{steps-three-and-four} and \ref{steps-five-to-seven}.

In Corollary \ref{no-singularity} (b) together with Proposition \ref{every-singular-leaf-is-diagonal} it is shown that $L^2(T_+)$ is contained in the diagonal extension $\diag(L^2_{BFH}(f_-))$. More precisely, a pair $(X, Y) \in L^2(T_+)$ is contained in $\diag(L^2_{BFH}(f_-)) \smallsetminus L^2_{BFH}(f_-)$ if and only if its geodesic realization in $\tau_-$ is a concatenation of eigenrays at either an unused legal turn, or else at an INP. But since eigenrays in $\tau_-$ are in 1-1 relation with periodic gates of train track map $f_-: \tau_- \to \tau_-$ (see Corollary \ref{eigen-bijection}), there exist only finitely many distinct eigenrays. Since also the number of unused legal turns is finite, 
and up to inversion there is only one INP in $\tau_-$,
it follows that $L^2(T_+)$ contains only finitely many $\FN$-orbits of pairs $(X,Y)$ which are contained in $\diag(L^2_{BFH}(f_-)) \smallsetminus L^2_{BFH}(f_-)$. This establishes claim (4) of Theorem \ref{main-result2}.

Furthermore, since we know from the previous paragraph that any sublamination of $L^2(T_+) \subset \diag(L^2_{BFH}(f_-))$ either contains some leaf of $L^2_{BFH}(f_-)$ or else it must contain $L^2(\rho)$ for some eigenray $\rho$, which by Lemma \ref{eigenray-BFH} is contained in $L^2_{BFH}(f_-)$. Thus every minimal sublamination of $L^2(T_+)$ must non-trivially intersect $L^2_{BFH}(f_-)$, which is itself minimal, see Proposition \ref{from-BFH} (2).
Hence $L^2_{BFH}(f_-)$ 
must be contained in $L^2(T_+)$, as unique minimal sublamination.
This proves parts (1) and (2) of Theorem \ref{main-result2}. 

By Proposition \ref{dual-diag-closed} the lamination $L^2(T_+)$ is diagonally closed, so that it contains with $L^2_{BFH}(f_-)$ also its 
diagonal extension and its diagonal closure:  $\diag(L^2_{BFH}(f_-)) \subset \diagclos(L^2_{BFH}(f_-)) \subset L^2(T_+)$. These inclusions, together with the above proved converse inclusion 
$L^2(T_+) \subset \diag(L^2_{BFH}(f_-))$, prove the equalities claimed in part (3) of Theorem \ref{main-result2}.
\end{proof}

\begin{rem}
\label{last}
The reader should notice that what we have shown in the last proof is stronger than what is claimed the statement of Theorem \ref{main-result2} in the Introduction, in the following sense:  In the proof and in the preceding material of this paper we never used statement (1) of Proposition \ref{from-BFH}. Thus what we have shown above is the statement of Theorem \ref{main-result2}, but with $L^2_{BFH}$ replaced by $L^2_{BFH}(f_-)$ for any train track representative $f_-$ of a suitable power $\phi^{-k}$ as stated in the beginning of the above proof.  In particular, this shows (by the uniqueness property (2) of Theorem \ref{main-result2}) that the lamination $L^2_{BFH}(f_-)$ does not depend on the particular choice of the train track map $f_-$.
\end{rem}

\section{Discussion}
\label{discussion}

Throughout this section $T$ denotes always an $\R$-tree from $\cvnbar$, and $\mu$ a current from $\curr$. Recall that such a pair $(T, \mu)$ is called 
{\em perpendicular} if $\langle T, \mu \rangle = 0$. Recall also that following \cite{KL3} this is equivalent to the statement $\supp(\mu) \subset L^2(T)$, which in turn is equivalent to $\diagclos(\supp(\mu)) \subset L^2(T)$.  The purpose of this section is to give (partial) answers to the following question:

{\em Under which circumstances does this last inclusion actually improve to}
$$\diagclos(\supp(\mu)) = L^2(T) \, ?$$

\smallskip

To shape the discussion a bit, we propose:

\begin{defn}
\label{diagonally-equal}
(a)
A pair $(T, \mu) \in \cvnbar \times \curr$ is said to be {\em diagonally equal} if $L^2(T) = \diagclos(\supp(\mu))$. In this case we also say that {\em $T$ is diagonally equal to $\mu$}, or {\em $\mu$ is diagonally equal to $T$}.

\smallskip
\noindent
(b)
A tree $T$ is called {\em diagonally equalizable (DE)} if there exists a current $\mu$ such that $(T, \mu)$ is diagonally equal. Similarly, a current $\mu$ is called {\em diagonally equalizable (DE)} if there exists a tree $T$ such that $(T, \mu)$ is diagonally equal.

\smallskip
\noindent
(c)
A tree $T$ is called {\em totally diagonally equalizable (TDE)} if every perpendicular current $\mu$ is diagonally equal to $T$. Similarly, a current $\mu$ is called {\em totally diagonally equalizable (TDE)} if every perpendicular tree $T$ is diagonally equal to $\mu$.

\smallskip
\noindent
(d)
A tree $T$ is called {\em uniquely diagonally equalizable (UDE)} if among all perpendicular currents there is (up to scalar multiples) precisely one current $\mu$ which is diagonally equal to $T$. Similarly, a current $\mu$ is called {\em uniquely diagonally equalizable (UDE)} if among all perpendicular trees there is (up to scalar multiples) precisely one tree $T$ which is diagonally equal to $\mu$.
\end{defn}

All of these definitions descend directly to the projectivized objects, so that one can speak for example of a ``uniquely diagonally equalizable'' $[T] \in \CVNbar$.  The result of Theorem \ref{main-thm} can be rephrased by stating that for any atoroidal iwip $\phi \in \Out(\FN)$ the attracting fixed point $[T_+] \in \CVNbar$ and the repelling fixed point $[\mu_-] \in \PCurr$ constitute a pair which is diagonally equal.

\smallskip

The reader may want to note that any tree $T$ which is both, TDE and UDE, must be dually uniquely ergodic (for the terminology see 
\cite{CHL1-III}). 
Similarly, if $T$ is perpendicular to any current $\mu$ which is both, TDE and UDE, then $T$ must be uniquely ergometric (as defined in \cite{CHL2}).
We will see below that for trees (but not necessarily for currents) UDE implies TDE.

\smallskip

By the above stated result of \cite{KL3} we know that every diagonally equal pair $([T],[\mu])$ must be perpendicular. The set $I_0$ of such perpendicular pairs contains the uniquely determined minimal set $\cal M^2 \subset \CVNbar \times \PCurr$ with respect to the $\Out(\FN)$-action (see \cite{KL1}), which is also contained in the cartesian product $\cal M^{cv} \times \cal M^{curr}$ of the (much better understood) minimal sets  $\cal M^{cv} \subset \CVNbar$ and $\cal M^{curr} \subset \PCurr$.
However, we only know these inclusions; it is open whether $\cal M^2$ is equal to $I_0 \cap (\cal M^{cv} \times \cal M^{curr})$.

\begin{rem}
\label{subminimal}
We will give below examples of perpendicular pairs which are not diagonally equal. The set $\cal{DE}$ of diagonally equal pairs $([T], [\mu])$ is by definition $\Out(\FN)$-invariant and non-empty, but it will follow from the results presented below that $\cal{DE}$ is not closed in $\CVNbar \times \PCurr$.
\end{rem}

\begin{rem}
\label{subintersectiongraph}
The set $\cal{DE}$ defines also an $\Out(\FN)$-invariant subgraph of the intersection graph $\cal I(\FN)$ defined in \cite{KL2}, and it seems worthwhile to think which kind of a subgraph this is.  Since all ``limit pairs'' $([T_+], [\mu_-])$ for atoroidal iwips are contained in $\cal{DE}$, it must consist of many distinct connected components. It seems likely that even the subgraph $\cal{DE}_0$ defined by all $\cal{DE}$-pairs which are contained in the main component $\cal I_0(\FN)$ of $\cal I(\FN)$ is non-connected.
\end{rem}

We will now turn to the question of necessary and sufficient conditions for a pair $(T, \mu)$ to be diagonally equal, and for $T$ or $\mu$ to be DE, TDE or UDE. 
We first recall some facts:

\begin{facts}
\label{known-facts}
(1)
For every algebraic lamination $L^2$ over $\FN$ there exist a current $\mu \neq 0$ with $\supp(\mu) \subset L^2$ (see \cite{CHL1-III}).  In particular, for every tree $T \in \partial \cvn := \cvnbar \smallsetminus \cvn$ there exists a current $\mu \in \curr$ which is perpendicular to $T$.

The dual statement is wrong, if we accept the subspace 
$\Curr_{+}(\FN) \subset \curr$ of currents $\mu \in \curr$ with {\em full support} (i.e. $\supp(\mu) = \partial^2 \FN$) as ``dual'' of $\cvn$. (Note that 
this dualization is natural in that
$\cvn$ can be characterized also as the subspace in $\cvnbar$ which consists of all $\R$-trees $T$ with ``empty zero lamination'').  Indeed, there exist currents $\mu \in \curr \smallsetminus \Curr_{+}(\FN)$ which are not perpendicular to any tree in $\cvnbar$. Interesting examples of such {\em filling} currents are given in \cite{KL4}.

\smallskip
\noindent
(2)
If $T, T' \in \partial \cvn$, and if there is a length decreasing $\FN$-equivariant map $T \to T'$, then one has $L^2(T) \subset L^2(T')$. This applies in particular to the case where $T'$ results from $T$ by contracting an $\FN$-invariant forest $T_0$:  Here each connected component of $T_0$ is mapped to a distinct point in $T'$. If $T_0$ contains connected components with infinite diameter, then the inclusion $L^2(T) \subset L^2(T')$ is strict: one has $L^2(T) \neq L^2(T')$.

\smallskip
\noindent
(3)
Let $f: \tau \to \tau$ be an expanding train track map which represents some $\psi \in \Out(\FN)$, but contrary to the case considered in the main body of this paper, we assume now that 
there is an $f$-invariant {\em bottom stratum} $\tau_{bottom}$ of $\tau$ such that $\pi_1(\tau_{bottom})$ is a $\psi$-invariant non-trivial proper free factor of $\FN$:  
hence $\psi$ is not iwip.

We assume further that 
the non-negative transition matrix $M(f)$ 
(assumed to be, after transposition, in normal form for non-negative matrices) has precisely two primitive diagonal submatrices, with eigenvalues $\lambda_{top}$ and $\lambda_{bottom}$ respectively, and that the lower left off-diagonal block of $M(f)$ is non-zero (while, according to the normal form for the transpose of $M(f)$, the upper right off-diagonal block must be the zero matrix).  For such expanding {\em two-strata} train track maps one has to distinguish two cases: 

If $\lambda_{top} < \lambda_{bottom}$, then both eigenvalues possess non-negative row eigenvectors $\vec v^{\,*}_{top}$ and $\vec v^{\,*}_{bottom}$ respectively, and each of them determines a {\em Perron-Frobenius-tree} $T_{top} \in \partial \cvn $ and $T_{bottom} \in \partial \cvn $ respectively. Both projective classes, $[T_{top}] \in \partial \CVN $ and $[T_{bottom}] \in \partial \CVN$ are fixed points of the induced action of the automorphism $\psi \in \Out(\FN)$ which is represented by the train track map $f$, see \cite{Lu2}, and they are the only such \cite{Lu1}.

It is easy to see that the $\FN$-action on $T_{top}$ is never free, as all of the bottom stratum acts elliptically.  On the other hand, $T_{bottom}$ may well have free $\FN$-action.

If $\lambda_{top} \geq \lambda_{bottom}$, then only $\vec v^{\, *}_{top}$ is non-negative, so that only $T_{top} \in \partial \cvn $ exists (and is projectively fixed by $\psi$), but no $T_{bottom}$ as in the other case.

On the other hand, if $\lambda_{top} > \lambda_{bottom}$, then there are two non-negative column eigenvectors $\vec v_{top}$ and $\vec v_{bottom}$ respectively, and both determine projectively $\psi$-invariant 
{\em Perron-Frobenius} 
currents $\mu_{top}, \mu_{bottom} \in \curr$, see 
\cite{BHL}.
If $\lambda_{top} \leq \lambda_{bottom}$, then (up to scaling) only one non-negative column eigenvector $\vec v_{bottom}$ exists,  and hence one has only one projectively $\psi$-invariant current $\mu_{bottom} \in \curr$, see \cite{BHL}.

\smallskip
\noindent
(4)
We have to carry the example of a 2-strata automorphism $\psi \in \Out(\FN)$ considered in part (3) above one step further: It is known that the inverse $\psi^{-1}$ is also a 2-strata automorphism, 
with respect to any (possibly relative) train track representative $f_-$ of $\psi^{-1}$.
The bottom stratum for both train track representatives, $f$ (as in (3)) and $f_-$ must determine (up to conjugation) the same free factor of $\FN$, if the restriction of $\psi$ to this factor is iwip.

In any case, the bottom stratum of $f_-$ determines a $\psi$-invariant Perron-Frobenius current $\mu_{bottom}^-$ which is perpendicular to both, 
$T_{top}$ and $T_{bottom}$
(if the latter exists), by a standard argument using the $\psi$-invariance of the intersection form $\langle T_{top}, \mu^-_{bottom} \rangle$ (or $\langle T_{bottom}, \mu_{bottom}^- \rangle$) and the fact that the eigenvalues for those trees are positive for $\psi$ while that of $\mu_{bottom}^-$ is negative (again for $\psi$).

By modifying $\psi$ through iterating only its restriction on the bottom part one can always achieve for the growth rates on the two strata that $\lambda_{top} < \lambda_{bottom}$ for $\psi$ and also $\lambda^-_{top} < \lambda^-_{bottom}$ for $\psi^{-1}$.

\end{facts}

It turns out that for trees one has more tools than for currents, so that we obtain:

\begin{prop}
\label{answers}
Using the notations defined in Facts \ref{known-facts} (3) and (4) above, we have:

\begin{enumerate}
\item
Not every $T \in \partial \cvn$ is DE. Counterexamples are given by the trees $T_{bottom}$, for any 2-strata exponential train track map $f$ 
with 
$\lambda_{top} < \lambda_{bottom}$, 
$\lambda^-_{top} < \lambda_{bottom}^-$ and 
without INP's.
We also assume that the automorphism $\psi \in \Out(\FN)$ represented by $f$ restricts to an atoroidal iwip on the bottom stratum of $f$.

\item
A tree $T \in \partial \cvn$ is TDE if and only if the dual lamination $L^2(T)$ is the diagonal closure of a (uniquely determined) minimal sublamination $L^2_0(T)$.
\item
Every tree $T \in \partial \cvn$ which is UDE must also TDE. A TDE-tree $T$ is UDE if and only if the minimal sublamination $L^2_0(T) \subset L^2(T)$ is uniquely ergodic.
\end{enumerate}
\end{prop}

\begin{proof}
(1)
[sketched only, details will be given elsewhere]
We first apply the main result of \cite{BHL} to some (possibly relative) train track representative $f_-$ of $\psi^{-1}$, which gives a bijection between the set of projectivized $\psi^{-1}$-invariant 
expanding currents and the set of non-negative column eigenvectors (up to rescaling) of $M(f_-)$.
Hence it follows from our hypothesis on the eigenvalues of $M(f)$ that $\mu^-_{bottom}$ is (up to scalar multiples) the only $\psi$-invariant 
expanding current. 

In 
\cite{CH}
it has been shown that for every $T \in \cvnbar$ with dense orbits and free $\FN$-action the number of projectivized ergodic currents $[\mu_i]$ carried by $L^2(T)$ is finite, so that $[T] \psi = [T]$ implies that (after possibly replacing $\psi$ by a suitable positive power) $\psi$ fixes every $[\mu_i]$ in this finite set. The expansiveness of $f$ guarantees that $T_{bottom}$ has dense orbits, the INP-assumption gives the free $\FN$-action on $T_{bottom}$, and $[T_{bottom}] \psi = [T_{bottom}]$ is a direct consequence of the fact that $T_{bottom}$ is a Perron-Frobenius tree for the train track map $f$.  

Since the leaves of $L^2(T_{bottom})$ are uniformly contracting under the action of $\psi$, it follows that every projectively $\psi$-invariant current $\mu$ carried by $L^2(T_{bottom})$ must be $\psi^{-1}$-expanding.  But in the first paragraph of this proof we have shown that $\mu^-_{bottom}$ is up to rescaling the only such $\psi^{-1}$-invariant expanding current. Since every current carried by $L^2(T_{bottom})$ is a linear combination of the extremal (= ergodic) ones, it follows that $[\mu^-_{bottom}]$ is the only projectivized current carried by $L^2(T_{bottom})$.

But $\mu_{bottom}^-$ is carried by the bottom stratum, so that the diagonal closure $\diagclos(\mu_{bottom}^-)$ is also carried by the bottom stratum, while $L^2(T_{bottom})$ must contain all of the BFH-attracting lamination for $\phi^{-1}$, which also contains paths that cross through the top stratum. Hence we have $\diagclos(\mu_{bottom}^-) \neq L^2(T_{bottom})$, which proves the claim.

\smallskip
\noindent
(2)
The ``if'' direction is immediate. In order to prove the ``only if'' direction we consider a minimal sublamination $L'^2$ which has strictly smaller diagonal closure than $L^2(T_{})$.  By Fact \ref{known-facts} (1) there is a current $\mu'$ with support in $L'^2$, which hence can not be diagonally equal to $T_{}$.

\smallskip
\noindent
(3)
If $T$ is not TDE, then by (2) in $L^2(T_{})$ there is a sublamination $L'^2$ with strictly smaller diagonal closure than $L^2(T_{})$, which by Facts \ref{known-facts} (1) carries some current $\mu'$. In particular, $\mu'$ is different from any current $\mu$ which is diagonally equal to $T_{}$. Hence with any such $\mu$ every $\mu + \epsilon \mu'$ is also diagonally equal to $T_{}$, for any $\epsilon > 0$.

The statement about unique ergodicity is really only a definitory statement.
\end{proof}

For the dual setting we get analogous, but in part weaker statements:

\begin{prop}
\label{current-answers}
Let $\mu \in \curr \smallsetminus \curr_+$, and assume that $\mu$ is not filling.
\begin{enumerate}
\item
There exist such $\mu$ which are not DE. Examples
are given by the Perron-Frobenius current $\mu_{top}$, for any 2-strata exponential train track map with $\lambda_{top} > \lambda_{bottom}$ and $\lambda^-_{top} > \lambda^-_{bottom}$.
\item
A current $\mu$ is TDE if 
its support $\supp(\mu)$ has diagonal closure which is within the set of arborescent laminations maximal 
with respect to the inclusion
($\diag(\supp(\mu))$ is ``maximal'').
\item
A TDE-current $\mu$ with maximal diagonal closure of $\supp(\mu)$ is UDE if and only if $\diag(\supp(\mu))$ is uniquely ergometric.
\end{enumerate}
\end{prop}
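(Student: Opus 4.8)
The plan is to follow the scheme of the proof of Proposition~\ref{answers}, exchanging the roles of $\R$-trees and currents and, correspondingly, of (projective) length functions and non-negative column eigenvectors of transition matrices. The asymmetry of this exchange is exactly what weakens the conclusions: a dual lamination $L^2(T)$ is automatically the diagonal closure of its minimal sublaminations, and trees acquire a useful linear structure only after passing to length functions, whereas $\diag(\supp(\mu))$ has no such internal structure and not every algebraic lamination is of the form $L^2(T)$. This is why ``diagonal closure of a minimal sublamination'' must be replaced by the coarser condition of being maximal in $\Lambda^2(\FN)\smallsetminus\{\partial^2\FN\}$, and why part~(2) yields only a sufficient condition. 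On the other hand, this same asymmetry makes part~(1) \emph{more} tractable than its tree counterpart.

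For part~(1), fix a $2$-strata exponential train track map $f\colon\tau\to\tau$ representing $\psi\in\Aut(\FN)$ with $\lambda_{top}>\lambda_{bottom}$; by Facts~\ref{known-facts}(3) the Perron--Frobenius current $\mu_{top}$ then exists and is projectively $\psi$-invariant. First one identifies $\supp(\mu_{top})$ as the minimal attracting lamination $L^2_{min}$ associated to the top stratum of $f$, whose leaves are realized by used legal paths of $\tau$ running through both strata; this mirrors, dually, the description of $L^2(T_{bottom})$ given in the proof of Proposition~\ref{answers}(1). The key step is then: every tree $T$ perpendicular to $\mu_{top}$ satisfies $L^2(T)\supseteq L^2_{bottom,min}$, where $L^2_{bottom,min}$ is the minimal lamination carried by the (primitive) bottom stratum. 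Indeed, since the iterates $f^t(e)$ of a top edge contain arbitrarily long subwords of $L^2_{bottom,min}$, every finite subword of $L^2_{bottom,min}$ occurs as a subword of some leaf of $L^2_{min}=\supp(\mu_{top})\subseteq L^2(T)$, hence --- by the characterization of $L^2(T)$ recalled in Remark~\ref{dual-lamination-criterion} --- in conjugacy classes of arbitrarily small $T$-translation length, so that $L^2_{bottom,min}\subseteq L^2(T)$. A short combinatorial check shows $L^2_{bottom,min}\not\subseteq\diag(L^2_{min})$ (a $\diag$-chain built from $L^2_{min}$-leaves cannot produce a leaf confined to the bottom stratum), so $L^2(T)\supsetneq\diag(L^2_{min})=\diag(\supp(\mu_{top}))$ for every perpendicular $T$, and $\mu_{top}$ is not DE. One could alternatively run the finite-dimensional Perron--Frobenius argument of Proposition~\ref{answers}(1), invoking \cite{KLnew}, but the direct route above is cleaner here precisely because the obstruction is that every candidate tree is ``too large'' rather than ``too small''.

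For part~(2), assume $\diag(\supp(\mu))$ is maximal in $\Lambda^2(\FN)\smallsetminus\{\partial^2\FN\}$. Since $\mu$ is not filling there is at least one perpendicular tree, and for any perpendicular $T$ one has $\supp(\mu)\subseteq L^2(T)$, hence $\diag(\supp(\mu))\subseteq L^2(T)$; as $L^2(T)\neq\partial^2\FN$ for $T\in\partial\cvn$, maximality forces $L^2(T)=\diag(\supp(\mu))$, i.e.\ $T$ is diagonally equal to $\mu$. Since $T$ was an arbitrary perpendicular tree, $\mu$ is TDE. For part~(3) the argument is essentially definitory, as in Proposition~\ref{answers}(3): under the hypotheses of part~(2), ``$T$ perpendicular to $\mu$'' is equivalent to ``$L^2(T)=\diag(\supp(\mu))$'' (one inclusion is the TDE property just proved, the other is trivial), so $\mu$ is UDE exactly when, up to scaling, there is a unique $\R$-tree with dual lamination $\diag(\supp(\mu))$ --- which is the definition of $\diag(\supp(\mu))$ being uniquely ergometric; at least one such tree exists because $\mu$ is not filling.

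I expect the crux to be part~(1): one must verify both that $\mu_{top}$ is supported on a single minimal lamination (and not on something larger) and that adding diagonal leaves to that lamination never reaches the bottom-stratum lamination. The latter point requires understanding which boundary points arise as endpoints of $L^2_{min}$-leaves as opposed to endpoints of $L^2_{bottom,min}$-leaves, and it is there that the asymmetry between the two strata has to be exploited with care. Parts~(2) and~(3) are, by contrast, essentially formal, once the standard fact $L^2(T)\neq\partial^2\FN$ for $T\in\partial\cvn$ is available.
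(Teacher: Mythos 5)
Your parts (2) and (3) are correct and coincide with what the paper means by ``dualization'' of Proposition \ref{answers}: the only inputs are that perpendicularity gives $\diag(\supp(\mu))\subseteq L^2(T)$, that $L^2(T)\neq\partial^2\FN$ for $T\in\partial\cvn$, and the definitory unravelling of UDE; the paper's own proof for these parts contains nothing more. The problem is part (1), where you deliberately set aside the route the paper actually indicates (dualize the extremal-decomposition/eigenvector argument of Proposition \ref{answers}(1), now run on length functions of trees perpendicular to $\mu_{top}$ -- this is exactly where the finite-dimensionality of $\cvnbar$ enters and why the paper says part (1) ``becomes easier'') in favour of a direct lamination argument.

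That direct argument is internally contradictory, and the contradiction sits at the step you yourself flag as the crux. You assert (A) that every finite subword of $L^2_{bottom,min}$ occurs as a subword of some leaf of $L^2_{min}=\supp(\mu_{top})$, and (B) that $L^2_{bottom,min}\not\subseteq\diag(L^2_{min})$. These cannot both hold: an algebraic lamination is recovered from its laminary language (this is exactly how laminations ``generated'' by a set of paths are defined in \S\ref{laminations}), so (A) already forces $L^2_{bottom,min}\subseteq L^2_{min}\subseteq\diag(L^2_{min})$, refuting (B). Moreover (A) is the true one, and in the strong form you need: when $\lambda_{top}>\lambda_{bottom}$, a fixed bottom-stratum path $\gamma$ occurs in $f^t(e)$, for $e$ a top edge, roughly $\sum_{s}\lambda_{top}^{s}\lambda_{bottom}^{t-s-r}\sim c_\gamma\lambda_{top}^{t}$ times, i.e.\ with positive asymptotic frequency, so $\mu_{top}$ gives every bottom-stratum cylinder positive mass and $L^2_{bottom,min}\subseteq\supp(\mu_{top})$. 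In particular $\supp(\mu_{top})$ is \emph{not} minimal, and the ``extra'' leaves you exhibit in $L^2(T)$ already lie in $\supp(\mu_{top})$: they present no obstruction whatsoever to $L^2(T)=\diag(\supp(\mu_{top}))$. So your proof of (1) establishes nothing, and any repair must produce leaves of $L^2(T)$ genuinely outside $\diagclos(\supp(\mu_{top}))$ \emph{for every} perpendicular $T$ -- which is precisely what the finite-dimensional Perron--Frobenius argument alluded to in the paper is designed to do.
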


\begin{proof}
The proof is given by a dualization of the above proof of the corresponding parts of Proposition \ref{answers}. 
The proof of part (1) becomes at some places easier, since $\CVNbar$ is finite dimensional.
\end{proof}



\end{document}